\newtheorem{theorem}{Theorem}[section]
\newtheorem{lemma}[theorem]{Lemma}
\theoremstyle{definition}
\newtheorem{definition}[theorem]{Definition}
\newtheorem{remark}[theorem]{Remark}
\newtheorem{example}[theorem]{Example}
\numberwithin{equation}{section}
\numberwithin{figure}{section}
\newcommand{\C}{\mathbb{C}}
\newcommand{\R}{\mathbb{R}}
\newcommand{\Z}{\mathbb{Z}}
\newcommand{\GL}{\mathbf{GL}}
\newcommand{\U}{\mathbf{U}}
\newcommand{\SU}{\mathbf{SU}}
\newcommand{\Sp}{\mathbf{Sp}}
\newcommand{\SL}{\mathbf{SL}}
\newcommand{\SO}{\mathbf{SO}}
\newcommand{\Rad}{\mathbf{R}}
\newcommand{\Zbf}{\mathbf{Z}}
\newcommand{\Xbf}{\mathbf{X}}
\newcommand{\zbf}{\mathbf{z}}
\renewcommand{\i}{\mathbf{i}}
\renewcommand{\O}{\mathbf{O}}
\newcommand{\cG}{\mathcal{G}}
\newcommand{\cP}{\mathcal{P}}
\newcommand{\Hom}{\mathrm{Hom}}
\newcommand{\Span}{\mathrm{span}}
\newcommand{\Lie}{\mathrm{Lie}}
\newcommand{\Fr}{\mathrm{Fr}}
\newcommand{\HN}{\mathrm{HN}}
\newcommand{\Iso}{\mathrm{Iso}}
\newcommand{\SIso}{\mathrm{SIso}}
\newcommand{\Symp}{\mathrm{Symp}}
\newcommand{\SOrth}{\mathrm{SOrth}}
\newcommand{\rk}{\mathrm{rk}}
\newcommand{\Ad}{\mathrm{Ad}}
\newcommand{\ad}{\mathrm{ad}}
\newcommand{\ab}{\mathrm{ab}}
\newcommand{\Mat}{\mathrm{Mat}}
\newcommand{\tr}{\mathrm{tr}}
\newcommand{\Diag}{\mathrm{diag}}
\newcommand{\conj}{\mathrm{conj}}
\newcommand{\sst}{\mathrm{sst}}
\renewcommand{\k}{\underline{k}}
\renewcommand{\u}{\underline{u}}
\renewcommand{\a}{\underline{a}}
\renewcommand{\b}{\underline{b}}
\renewcommand{\leq}{\leqslant}
\renewcommand{\geq}{\geqslant}
\begin{document}

\title{Harder-Narasimhan filtrations of decorated vector bundles}

\author{Emanuel Roth}
\address{School of Mathematics, James Clerk Maxwell Building, Peter Guthrie Tait Road, Edinburgh EH9 3FD, United Kingdom}
\email{eroth2@ed.ac.uk}

\author{Florent Schaffhauser}
\address{Mathematisches Institut, Universität Heidelberg, Im Neuenheimer Feld 205,
    69120 Heidelberg, Germany}
\email{fschaffhauser@mathi.uni-heidelberg.de}

\keywords{}

\begin{abstract}
    A \textit{decorated vector bundle} is a vector bundle equipped with a reduction of structure group to a complex reductive subgroup $G \subseteq \mathbf{GL}(r,\mathbb{C})$. Examples include symplectic and special-orthogonal vector bundles, as well as vector bundles with trivial determinants. In this expository paper, we provide direct constructions of Harder-Narasimhan filtrations of symplectic and special-orthogonal vector bundles, and use them to construct canonical reductions in the sense of Atiyah and Bott. We compare these canonical reductions to those constructed by Biswas and Holla. Lastly, we set up the obstruction theory necessary to define Harder-Narasimhan types of principal bundles, and stratify the moduli stack of principal $G$-bundles.
\end{abstract}

\maketitle
\tableofcontents
\section{Introduction}\label{sec_intro}

Geometric invariant theory (GIT) was developed by David Mumford as a way to construct quotients and solve moduli problems in algebraic geometry \cite{mumf_projectiveinvar}. The basic notion in this theory is that of a categorical quotient: given a $G$-action on a variety $X$, the goal is to find sufficient conditions on $G$ and $X$ so that an arbitrary $G$-invariant morphism $f : X \to Y$ factors through a canonical morphism $\pi : X \to X\sslash G$. Such a categorical quotient exists for instance when $G$ is reductive and $X$ is affine. But even in that case, the variety $X \sslash G$ is in general distinct from the orbit space $X/G$: to get a point in $X\sslash G$, one needs to further identify two orbits if their closures intersect, and this turns out to be equivalent to only keeping the closed orbits. More generally, to construct quotients of quasi-projective variety, one needs to impose a \textit{semistability condition} on the orbits, which selects an open subset $X^{\sst} \subseteq X$, of which one can then construct a categorical quotient $X^{\sst} \sslash G$ \cite{seshadri_quotientspacesmoduloreductivealgebraicgroups, seshadri_geometricreductivityoverarbitrarybase}. Moreover, there is a dense open subset of $X^{\sst} \sslash G$ which is an orbit space: its points are in bijection with the so-called \textit{stable} $G$-orbits in $X^{\sst}$.

\smallskip

As an application, one can construct moduli spaces of (semi)-stable vector bundles using GIT. Indeed, Mumford showed in \cite{mumf_projectiveinvar}, that for vector bundles over a smooth projective curve, the GIT-(semi)-stability condition translates to a much more concrete slope-(semi)-stability condition (Definition \ref{def_slopestability}). This was generalized by A. Ramanathan in \cite{ramanathan_stableprincipalbundles} to principal $G$-bundles for a reductive structure group $G$, where subbundles are replaced by parabolic reductions. When $G = \GL(r,\C)$, Ramanathan recovers Mumford's slope-(semi)-stability condition. Similarly, for vector bundles that are \textit{decorated} with additional structure, such as a symplectic form or a non-degenerate quadratic form, it is possible to formulate Ramanathan's semistability in terms of a slope condition, which is only tested on certain special subbundles (namely isotropic subbundles, in the case of symplectic and orthogonal vector bundles). For convenience, we provide an explicit proof of these facts in Theorems \ref{thm_ramversusslopesymp} and \ref{thm_ramversusslopeorth}, building on the approach of Hyeon and Murphy in  \cite{hyeon-murphy_noteonthestabilityofprincbund}. The point is that, for decorated vector bundles, stability and semistability can also be formulated using slope conditions that are tested only on the isotropic subbundles of these decorated vector bundles.

\smallskip

When studying vector bundles or principal bundles over an algebraic curve, a natural question is: How far is such a bundle from being semistable? For a vector bundle, the question is answered by looking at the Harder-Narasimhan filtration, first constructed in \cite{harder-narasimhan_onthecohomologygroupsofmodulispaces}. This filtration is uniquely determined by the condition that the successive quotients are semistable and have their slopes arranged in strictly decreasing order (see Theorem \ref{thm_hardernarasimhan} for a proof of the Harder-Narasimhan theorem). For a principal bundle, the Harder-Narasimhan filtrations must be replaced by a reduction of structure group to a so-called \textit{canonical reduction} to a uniquely determined parabolic subgroup. For complex reductive groups, canonical reductions were introduced by Atiyah and Bott in \cite[$\S$10]{atiyah-bott_yangmills}. Over an arbitrary base field, the construction is due to K.~Behrend in \cite{behrend_semi-stabilityofreductivegroupschemesovercurves} using complementary polyhedra (see also \cite{biswas-holla_harder-narasimhanreductionofprincbund}). Since semistability of symplectic and orthogonal vector bundles can be characterized using slope-semistability, it is natural to try and characterize canonical reductions of such bundles in terms of certain special filtrations, and we write this out explicitly in Theorems \ref{thm_ramversusslopesymp} and \ref{thm_ramversusslopeorth}. As we shall see, the parabolic subgroup associated to the canonical reduction of a principal $G$-bundle $\xi$ determines a filtration by isotropic sub-bundles of the decorated vector bundle associated to $\xi$ but, in order to relate this to the usual Harder-Narasimhan filtration of the underlying vector bundle, one needs to add the co-isotropic complements of these subbundles to the filtration. In our exposition, we follow the Atiyah-Bott approach and make systematic use of the special-orthogonal Harder-Narasimhan filtration of the adjoint bundle of a principal $G$-bundle (see Subsection \ref{subsec_ab}). Then in Subsection \ref{subsec_bh}, we compare this to the Biswas-Holla approach, in which the existence of a canonical reduction is proved using dominant characters. We unify the two approaches in Theorem \ref{thm_canonreductunique}. Important references for our exposition are the works of Friedman-Morgan \cite{friedman-morgan_ontheconversetoatheoremofatiyahbott} and Ho-Liu \cite{ho-liu_yangmillsconnectionsorientable}.

\smallskip

Since moduli problems can also be formulated in terms of moduli stacks, we can study the stratifications of the stack of principal bundles $Bun(G)$ induced by canonical reductions. This is related to the notion of $\Theta$-stratifications of moduli stacks introduced by D. Halpern-Leistner in \cite{danielhalpernleistner_onthestructureofinstabilityinmodulitheory} and is currently a topic of active investigation in moduli theory (see for instance \cite{alperDHLheinloth_existenceofmodulispacesforalgebraicstacks}). In Section \ref{sec_hntypes}, we show how canonical reductions induce strata of $Bun(G)$ labeled by Harder-Narasimhan types, and we make these types explicit for decorated vector bundles. When $G = \GL(r,\C)$, the Harder-Narasimhan type of a vector bundle $E$ consists essentially of the topological invariants (rank and degree) of the successive quotients of the Harder-Narasimhan filtration of $E$.

\section{Semistable bundles}\label{sec_semistab}

We compare the slope-semistability of vector bundles from Mumford in \cite{mumf_projectiveinvar} with the semistability of principal bundles from Ramanathan in \cite{ramanathan_stableprincipalbundles}. Following Hyeon-Murphy in \cite{hyeon-murphy_noteonthestabilityofprincbund}, we recall that they coincide when the structure group is $\GL(r,\C)$.

\subsection{Slope-semistability}\label{subsec_slopesemistab}

In \cite{mumf_projectiveinvar}, Mumford introduced slope-(semi)stability to study moduli problems of vector bundles, using geometric invariant theory.

\begin{definition}\label{def_slopestability}
    Let $E$ be a non-zero holomorphic vector bundle of rank $r$ on a compact Riemann surface $X$, then:
    \begin{enumerate}[label=(\alph*)]
        \item The \textit{slope} of $E$ is $\mu(E):=\deg(E)/\rk(E)$, where $\deg(E)$ is the degree of $E$ and $\rk(E)$ is the rank of $E$.

        \item $E$ is \textit{slope-semistable} if all non-zero subbundles $F\subsetneq E$ fulfill $\mu(F)\leq\mu(E)$.

        \item $E$ is \textit{slope-stable} if all non-zero proper subbundles $F\subsetneq E$ fulfill $\mu(F)<\mu(E)$.
    \end{enumerate}
\end{definition}

To generalize stability to principal $G$-bundles, where $G$ is a complex reductive group, we wish to rephrase it in terms of principal $\GL(r,\C)$-bundles. As $\GL(r,\C)$ is a reductive group, we consider the \textit{Cartan subgroup} $T$ of $\GL(r,\C)$ of diagonal matrices, and the \textit{Borel subgroup} $B$ of upper-right triangular matrices, such that $T\subseteq B\subseteq\GL(r,\C)$. We call those the \textit{standard} Cartan and Borel subgroups of $\GL(r, \C)$. A \textit{standard parabolic} subgroup $P\subseteq\GL(r,\C)$ is a subgroup that contains $B$. Such subgroups consist of upper-right block matrices and correspond uniquely to subsets $I$ of the set of \textit{simple roots} $\triangle$ induced by $B$:
\begin{equation}\label{eq_simpleroots}
    \triangle=\left\{\alpha_{l,l+1}\in\Lie(T)^*\ \middle\vert\ \alpha_{l,l+1}(A):=A_{l,l}-A_{l+1,l+1}\right\}
\end{equation}
where $A = (A_{ij})_{1 \leqslant i, j \leqslant r}$ for every $r \times r$ matrix with coefficients in $\C$. We henceforth index standard parabolics as $P_I$, and refer to \cite{borel_linearalgebraicgroups} for a review of Cartan, Borel, and parabolic subgroups of reductive groups.

\begin{remark}\label{rem_subbundcocy}
    Let $F\subsetneq E$ be a non-zero subbundle of rank $l$.
    There exist cocycles $(\sigma_{i,j})_{i,j\in J}$ of $E$, subordinate to a covering $(U_i)_{i\in J}$ of $X$, of the form:
    \begin{equation*}
        \sigma_{i,j}:U_i\cap U_j\rightarrow \GL(r,\C),\quad x\mapsto\begin{pmatrix}
            \alpha_{i,j}(x) & \beta_{i,j}(x)  \\
            0               & \delta_{i,j}(x) \\
        \end{pmatrix},\quad i,j\in J,
    \end{equation*}
    where $(\alpha_{i,j})_{i,j\in J}$ are cocycles of $F$ and $(\delta_{i,j})_{i,j\in J}$ are cocycles of $E/F$. Thus, $(\sigma_{i,j})_{i,j\in J}$ maps into a standard parabolic $P_I$, where $I=\{\alpha_{l,l+1}\}\subseteq\triangle$.
\end{remark}

Using the same cocycles as in Remark \ref{rem_subbundcocy}, we can identify subbundles $F\subsetneq E$ with \textit{reductions} of the \textit{frame bundle} $\Fr(E)$ of $E$ to a structure group $P_I$, which is a \textit{standard maximal parabolic} subgroup of $\GL(r,\C)$. Reductions are sections $s:X\rightarrow\Fr(E)/P_I$ of the bundle $\Fr(E)/P_I$.

\begin{lemma}\label{lem_reductsubbundcorr}
    The following are in correspondence:
    \begin{enumerate}[label=(\roman*)]
        \item Subbundles $F\subsetneq E$ of rank $l$.

        \item Reductions $s:X\rightarrow \Fr(E)/P_I$ of $\Fr(E)$ to $P_I$, where $I=\{\alpha_{l,l+1}\}\subseteq\triangle$.
    \end{enumerate}
\end{lemma}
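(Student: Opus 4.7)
The plan is to construct explicit maps in both directions and then verify they are mutually inverse, leveraging the cocycle description from Remark \ref{rem_subbundcocy} together with the standard equivalence between reductions of structure group and sections of associated quotient bundles.

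For (i) $\Rightarrow$ (ii), start with a rank-$l$ subbundle $F \subsetneq E$. By Remark \ref{rem_subbundcocy}, one can choose a covering $(U_i)_{i \in J}$ of $X$ and cocycles $(\sigma_{i,j})$ of $E$ valued in the standard maximal parabolic $P_I$ with $I = \{\alpha_{l,l+1}\}$. These cocycles cut out a principal $P_I$-subbundle $\mathcal{P} \subseteq \Fr(E)$: locally $\mathcal{P}|_{U_i} \cong U_i \times P_I$, glued by the $\sigma_{i,j}$. The standard dictionary identifies principal $P_I$-subbundles of the principal $\GL(r,\C)$-bundle $\Fr(E)$ with global sections of the fiber bundle $\Fr(E)/P_I \to X$, yielding the desired reduction $s$.

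For (ii) $\Rightarrow$ (i), pull back the natural $P_I$-torsor $\Fr(E) \to \Fr(E)/P_I$ along $s$ to obtain a principal $P_I$-subbundle $\mathcal{P} \subseteq \Fr(E)$. Since $P_I$ stabilizes the coordinate subspace $\C^l \times \{0\} \subseteq \C^r$, the standard representation of $\GL(r,\C)$ restricts to a $P_I$-representation on $\C^l$, and the associated bundle $\mathcal{P} \times_{P_I} \C^l$ embeds into $E = \Fr(E) \times_{\GL(r,\C)} \C^r$ as a subbundle of rank $l$. This produces the subbundle $F$.

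Finally, verify the two assignments are mutually inverse. Starting from $F$, passing to cocycles $(\sigma_{i,j})$ in $P_I$ and then forming the associated $\C^l$-bundle recovers $F$, because the upper-left block $(\alpha_{i,j})$ is by construction a cocycle for $F$. Conversely, a subbundle built from a section $s$ admits, after choosing local trivializations adapted to $\mathcal{P}$, cocycles lying in $P_I$ that reproduce $s$. The main obstacle is a well-definedness check: the construction from a subbundle must not depend on the choice of adapted cocycles, and two choices differ by a coboundary in $P_I$, so they determine the same principal $P_I$-subbundle of $\Fr(E)$; descent theory then ensures the correspondence is canonical.
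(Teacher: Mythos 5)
Your proposal is correct, but it follows a cocycle-and-associated-bundle route rather than the paper's direct pointwise argument. The paper defines the section intrinsically: for (i)$\Rightarrow$(ii), $s(x)$ is simply the $P_I$-orbit of frames $\C^r\rightarrow E_x$ that restrict to an isomorphism $\C^l\times\{0\}^{r-l}\rightarrow F_x$, and for (ii)$\Rightarrow$(i), one sets $F_x=s(x)(\C^l\times\{0\}^{r-l})$ and glues using holomorphy of $s$; no choice of cocycles is made, so no well-definedness check is needed. You instead invoke Remark \ref{rem_subbundcocy} to choose adapted cocycles, build the principal $P_I$-subbundle of $\Fr(E)$ they define, and pass through the standard dictionary between principal subbundles and sections of $\Fr(E)/P_I$, with the reverse direction given by the associated bundle $\mathcal{P}\times_{P_I}\C^l\hookrightarrow E$; you then must (and do) argue that different adapted cocycles differ by a $P_I$-valued coboundary coming from trivializations adapted to $F$, so they cut out the same subbundle of $\Fr(E)$, not merely an isomorphic abstract $P_I$-bundle. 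Both arguments are sound: the paper's is leaner and manifestly canonical, while your cocycle formulation has the advantage of matching the computational setup used later (e.g.\ in Lemma \ref{lem_tgpisom}, where the adapted cocycles and the adjoint action on $\Lie(\GL(r,\C))/\Lie(P_I)$ are exactly what one works with).
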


\begin{proof}
    For \textit{(i)} to \textit{(ii)}, we induce a section $s:X\rightarrow\Fr(E)/P_I$, $x\mapsto s(x)$, where $s(x)$ represents the isomorphisms $\C^r\rightarrow E_x$ that restrict to an isomorphism $\C^l\times\{0\}^{r-l}\rightarrow F_x$.

    For \textit{(ii)} to \textit{(i)}, for all $x\in X$, $F_x=s(x)(\C^l\times\{0\}^{r-l})$ is a well-defined subspace of $E_x$. Since $s$ is holomorphic, the subspaces $(F_x)_{x\in X}$ glue to a subbundle $F\subsetneq E$.
\end{proof}

For each subbundle $F$, the corresponding section $s:X\rightarrow \Fr(E)/P_I$ gives frames of $E$ that extend frames of $F$. Given local trivializations of $E$, we can induce from $s$ a family of cocycles $(\sigma_{i,j})_{i,j\in J}$ of $E$, compatible with $F$ as in Remark \ref{rem_subbundcocy}. Through Lemma \ref{lem_reductsubbundcorr}, the natural way to generalize slope-stability to principal $G$-bundles is by imposing slope conditions on reductions to standard maximal parabolics $P$ of $G$. For a \textit{decorated vector bundle} $(E,s)$, i.e., a vector bundle equipped with a reduction $s:X\rightarrow\Fr(E)/G$ of structure group $G\subseteq \GL(r,\C)$, it means that the slope conditions from Definition \ref{def_slopestability} are tested only on certain subbundles $F\subsetneq E$ that respect the decoration.
We first work out the $G=\SL(r,\C)$ case, corresponding to vector bundles with trivial determinants, i.e., special vector bundles.

\begin{definition}\label{def_specialvecbun}
    A \textit{special vector bundle} $(E,\tau)$ consists of a vector bundle $E$, and a non-vanishing section $\tau:X\rightarrow\det(E)$ of the determinant bundle.
\end{definition}

For a rank $r$ special vector bundle $(E,\tau)$, giving a section $\tau$ is equivalent to giving a reduction $s:X\rightarrow\Fr(E)/\SL(r,\C)$ of the frame bundle. Hence, a special vector bundle is a decorated vector bundle. Just as vector bundles $E$, of rank $r$, correspond to principal $\GL(r,\C)$-bundles through their frame bundles $\Fr(E)$, special vector bundles $(E,\tau)$, of rank $r$, correspond to principal $\SL(r,\C)$-bundles through their \textit{special frame bundles} $\Fr_{\SL}(E,\tau)$.

\begin{definition}\label{def_specialframebun}
    For a special vector bundle $(E,\tau)$ of rank $r$, the \textit{special frame bundle} $\Fr_{\SL}(E,\tau)$ is a principal $\SL(r,\C)$-bundle, whose fiber at $x\in X$ is given by:
    \begin{equation*}
        \SIso_\C^\tau(\C^{r},E_x):=\left\{f\in\Iso_\C(\C^{r},E_x)\ \middle\vert\ \det(f)=\tau(x)\right\}.
    \end{equation*}
    The right $\SL(r,\C)$-action on the fiber of $x\in X$ is given by:
    \begin{equation*}
        \SIso_\C^\tau(\C^{r},E_x)\times\SL(r,\C)\rightarrow\SIso_\C^\tau(\C^{r},E_x),\quad(f,A)\mapsto [v\mapsto f(Av)].
    \end{equation*}
\end{definition}

Just as in the $\GL(r,\C)$ case, $\SL(r,\C)$ is a reductive group (in fact it is semisimple). We intersect the standard Cartan and Borel subgroups of $\GL(r,\C)$ with $\SL(r,\C)$ to obtain Cartan and Borel subgroups of $\SL(r,\C)$.
The root system of $\SL(r,\C)$ is the same as that of $\GL(r,\C)$, so standard maximal parabolic subgroups $P_I$ of $\SL(r,\C)$ correspond to subsets $I=\{\alpha_{l,l+1}\}\subseteq\triangle$.
This leads to a variation of Lemma \ref{lem_reductsubbundcorr} for special vector bundles.

\begin{lemma}\label{lem_reductsubbundcorrsl}
    The following are in correspondence:
    \begin{enumerate}[label=(\roman*)]
        \item Subbundles $F\subsetneq E$ of $(E,\tau)$ of rank $l$.

        \item Reductions $s:X\rightarrow \Fr_{\SL}(E,\tau)/P_I$ of $\Fr_{\SL}(E,\tau)$ to $P_I$, where $I=\{\alpha_{l,l+1}\}\subseteq\triangle$.
    \end{enumerate}
\end{lemma}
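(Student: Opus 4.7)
The plan is to mirror the proof of Lemma \ref{lem_reductsubbundcorr} inside $\SL(r,\C)$, using two structural observations. First, $\Fr_{\SL}(E,\tau)$ sits inside $\Fr(E)$ as a principal-$\SL(r,\C)$-sub-bundle, whose fiber at $x$ consists of the frames $f:\C^r\to E_x$ with $\det(f)=\tau(x)$. Second, the standard maximal parabolic $P_I\subseteq\SL(r,\C)$ associated with $I=\{\alpha_{l,l+1}\}$ is the intersection with $\SL(r,\C)$ of the corresponding standard maximal parabolic of $\GL(r,\C)$; this is precisely the reason the same index set $I$ parametrizes maximal parabolics in both groups.

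For the direction \textit{(i)} to \textit{(ii)}, given a subbundle $F\subsetneq E$ of rank $l$, I would define $s(x)\in \Fr_{\SL}(E,\tau)_x/P_I$ as the $P_I$-orbit of any special isomorphism $f:\C^r\to E_x$ with $f(\C^l\times\{0\}^{r-l})=F_x$. Existence of such an $f$ follows by taking any $\GL(r,\C)$-frame $f_0$ compatible with $F_x$ and rescaling its last basis vector by $\tau(x)/\det(f_0)\neq 0$, which preserves the image of $\C^l\times\{0\}^{r-l}$ and enforces $\det(f)=\tau(x)$. Well-definedness of $s(x)$ rests on the following key computation: if $f_1,f_2\in\SIso_\C^\tau(\C^r,E_x)$ both send $\C^l\times\{0\}^{r-l}$ onto $F_x$, then $f_1^{-1}\circ f_2$ is block upper-triangular (hence lies in the $\GL$-parabolic), and its determinant is $\det(f_2)/\det(f_1)=\tau(x)/\tau(x)=1$, placing it in $P_I\subseteq\SL(r,\C)$.

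For the direction \textit{(ii)} to \textit{(i)}, I would set $F_x:=s(x)(\C^l\times\{0\}^{r-l})$, which is well-defined because $P_I\subseteq\SL(r,\C)$ stabilizes the subspace $\C^l\times\{0\}^{r-l}\subseteq\C^r$. Holomorphicity of $s$, together with choosing local trivializations of $E$ adapted to $F$ as in Remark \ref{rem_subbundcocy}, then implies that the subspaces $(F_x)_{x\in X}$ glue into a holomorphic subbundle $F\subsetneq E$ of rank $l$. The two constructions are manifestly mutually inverse.

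The only genuinely new ingredient compared with Lemma \ref{lem_reductsubbundcorr} is the determinant-rescaling argument and the orbit computation in the second paragraph; this is the main obstacle in the sense that it is the single place one must control the passage from $\GL(r,\C)$-frames to $\SL(r,\C)$-frames, and it reduces to verifying that the natural map $\Fr_{\SL}(E,\tau)_x/P_I\to\Fr(E)_x/P_I$ is a bijection onto the Grassmannian of $l$-planes in $E_x$. Once this is in hand, the rest of the proof is a routine transcription of Lemma \ref{lem_reductsubbundcorr}.
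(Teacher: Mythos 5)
Your proposal is correct and follows exactly the route the paper intends: Lemma \ref{lem_reductsubbundcorrsl} is stated as a variation of Lemma \ref{lem_reductsubbundcorr}, and your argument is the transcription of that proof to $\SL(r,\C)$, with the determinant-rescaling of a frame and the check that two adapted special frames differ by an element of $P_I = P_I^{\GL}\cap\SL(r,\C)$ being precisely the details the paper leaves implicit. No gaps; the extra verification that $\det(f_1^{-1}\circ f_2)=1$ is the right (and only) new ingredient.
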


Following the discussion after Lemma \ref{lem_reductsubbundcorr}, stability for vector bundles $E$ and special vector bundles $(E,\tau)$ should be equivalent since Lemmas \ref{lem_reductsubbundcorr} and \ref{lem_reductsubbundcorrsl} give the analogous correspondences.
The decoration of a reduction of $E$ to $\SL(r,\C)$ does not restrict the set of subbundles $F\subsetneq E$ with respect to which one tests the slope conditions. This means that the stability and semistability conditions for decorated vector bundles $(E, \tau)$, where $\tau$ is a non-vanishing section of $\det(E)$, are exactly that of stability and semistability of the underlying vector bundle $E$. Observe that $\SL(r,\C)$ is the \textit{commutator subgroup} $[\GL(r,\C),\GL(r,\C)]$ of $\GL(r,\C)$. In Lemma \ref{lem_derived}, we will see the same phenomenon as Lemma \ref{lem_reductsubbundcorrsl}, that the stability of principal $[G,G]$-bundles and the stability of their extensions to principal $G$-bundles are equivalent. Note that the commutator subgroup $[G,G]$ is always semisimple and has the same root system as $G$.

\subsection{Ramanathan-semistability}

We review Ramanathan's work in \cite{ramanathan_stableprincipalbundles}, including his definition of semistability and stability for principal $G$-bundles, where $G$ is complex and reductive. Let $\xi$ be a holomorphic principal $G$-bundle on a compact Riemann surface $X$.
For a reduction $s:X\rightarrow\xi/H$ of $\xi$ to a subgroup $H\subseteq G$, we can pull the bundle $\xi\rightarrow\xi/H$ back by $s$ to obtain a principal $H$-bundle $s^*\xi$ on $X$. We also refer to $s^*\xi$ as the \textit{reduction} of $\xi$ to $H$. Following \cite{ramanathan_stableprincipalbundles}, we define the \textit{vertical tangent bundle} $V_{\xi/H}$, which is the vector bundle on $\xi/H$ consisting of tangent vectors along the fibers of $\xi/H\rightarrow X$.

\begin{definition}\label{def_stabsstprincbund}
    Let $\xi$ be a holomorphic principal $G$-bundle on a compact Riemann surface $X$.
    \begin{enumerate}[label=(\alph*)]
        \item $\xi$ is \textit{Ramanathan-semistable} if for all maximal parabolic subgroups $P$ of $G$, and for all reductions $s : X \to \xi/P$, we have $\deg(s^*V_{\xi/P})\geq0$.

        \item $\xi$ is \textit{Ramanathan-stable} if for all maximal parabolic subgroups $P$ of $G$, and for all reductions $s^*\xi$ of $\xi$ to $P$, we have $\deg(s^*V_{\xi/P})>0$.
    \end{enumerate}
\end{definition}

The following two remarks give us concrete ways to verify Ramanathan-semistability and Ramanathan-stability. When statements for semistability and stability are analogous, we adopt the convention of writing (semi)-stability for brevity, along with inequalities $(\leq)$ and $(\geq)$.

\begin{remark}\label{rem_adbundleexactseq} Let $s^*\xi$ be a reduction of $\xi$ to a parabolic subgroup $P$ of $G$. The adjoint representation on $P$ induces a short exact sequence of vector bundles:
    \begin{equation*}
        0\rightarrow\ad(s^*\xi)\rightarrow\ad(\xi)\rightarrow s^*V_{\xi/P}\rightarrow0,
    \end{equation*}
    as seen in \cite[Remark 2.2]{ramanathan_stableprincipalbundles}. Since $G$ is reductive, there exists a nondegenerate adjoint-invariant form on $\Lie(G)$ such that $\ad(\xi)$ is self-dual, so $\ad(\xi)$ has degree $0$. From the short exact sequence, we have $\deg(s^*V_{\xi/P})=-\deg(\ad(s^*\xi))$. This can be used as an alternate characterization of Ramanathan-(semi)-stability. Namely, $\xi$ is Ramanathan-(semi)-stable when for all reductions $s:X\rightarrow\xi/P$ to maximal parabolic subgroups $P$ of $G$, we have $\deg(\ad(s^*\xi)) (\leq) 0$.
\end{remark}

\begin{remark}\label{rem_stabsstprincbund}
    Once we fix a Borel subgroup $B$, to verify Ramanathan-(semi)-stability, it suffices to verify the inequality for reductions to maximal \textit{standard} parabolic subgroups of $G$. For a principal $G$-bundle $\xi$ with this property and a reduction $s:X\rightarrow\xi/P$ to any maximal parabolic subgroup $P\subsetneq G$, we have that:
    \begin{enumerate}[label=(\roman*)]
        \item There exists $g\in G$, such that $gPg^{-1}=P_I$ for some standard maximal parabolic subgroup $P_I\subsetneq G$. See \cite[IV. 11.1 Theorem, IV. 11.3 Corollary]{borel_linearalgebraicgroups}.

        \item We can conjugate a reduction $s:X\rightarrow\xi/P$ by $g$ to obtain a reduction $s':X\rightarrow\xi/P_I$.

        \item $s^*\xi$ and $s'^*\xi$ share cocycles up to conjugation, so the same is true for $\ad(s^*\xi)$ and $\ad(s'^*\xi)$. Thus, the determinant bundles $\det(\ad(s^*\xi))$ and $\det(\ad(s'^*\xi))$ are isomorphic, and the adjoint bundles have the same degree, i.e., $\deg(\ad(s^*\xi))=\deg(\ad(s'^*\xi))(\leq) 0$.
    \end{enumerate}
    Through these points, the claim of this remark follows from Remark \ref{rem_adbundleexactseq}.
\end{remark}

The next lemma is from \cite[Proposition 1]{hyeon-murphy_noteonthestabilityofprincbund}, which we use to recall the equivalence between slope-(semi)-stability and Ramanathan-(semi)-stability. Let $E$ be a vector bundle of rank $r$.

\begin{lemma}\label{lem_tgpisom}
    For a subbundle $F\subsetneq E$ of rank $l$, and the corresponding reduction $s:X\rightarrow\Fr(E)/P_I$ from Lemma \ref{lem_reductsubbundcorr}, we have:
    \begin{equation*}
        s^*V_{\Fr(E)/P_I}\cong F^*\otimes(E/F).
    \end{equation*}
\end{lemma}

\begin{proof}
    Let $(\sigma_{i,j})_{i,j\in J}$ be cocycles of $E$ compatible with $F$, as in Remark \ref{rem_subbundcocy}, then the maps $x\mapsto (\alpha_{i,j}(x)^{-1})^T\otimes\delta_{i,j}(x)$ define cocycles of $F^*\otimes(E/F)$.
    Thus, to show that $s^*V_{\Fr(E)/P_I}\cong F^*\otimes(E/F)$, it suffices to prove that $((\alpha_{i,j}^{-1})^T \otimes \delta_{i,j})_{i,j\in J}$ are cocycles of $s^*V_{\Fr(E)/P_I}$.
    We can construct cocycles $(\tau_{i,j})_{i,j\in J}$ of $s^*V_{\Fr(E)/P_I}$ by composing $(\sigma_{i,j})_{i,j\in J}$ with $\Ad:P_I\rightarrow\GL(\Lie(\GL(r,\C))/\Lie(P_I))$, and calculate:
    \begin{align*}
        \Ad\begin{pmatrix}
               a & b \\
               0 & d \\
           \end{pmatrix}\begin{pmatrix}
                            * & * \\
                            C & * \\
                        \end{pmatrix} & =\begin{pmatrix}
                                             a & b \\
                                             0 & d \\
                                         \end{pmatrix}\begin{pmatrix}
                                                          * & * \\
                                                          C & * \\
                                                      \end{pmatrix}\begin{pmatrix}
                                                                       a & b \\
                                                                       0 & d \\
                                                                   \end{pmatrix}^{-1} \\
                                     & =\begin{pmatrix}
                                            a & b \\
                                            0 & d \\
                                        \end{pmatrix}\begin{pmatrix}
                                                         * & * \\
                                                         C & * \\
                                                     \end{pmatrix}\begin{pmatrix}
                                                                      a^{-1} & *      \\
                                                                      0      & d^{-1} \\
                                                                  \end{pmatrix}     \\
                                     & =\begin{pmatrix}
                                            *        & * \\
                                            dCa^{-1} & * \\
                                        \end{pmatrix}
    \end{align*}
    Hence, the cocycles $(\tau_{i,j})_{i,j\in J}$ are of the form $x\mapsto[C\mapsto\delta_{i,j}(x)C\alpha_{i,j}(x)^{-1}]$, alternatively $x\mapsto (\alpha_{i,j}(x)^{-1})^T\otimes\delta_{i,j}(x)$, as we wanted to show.
\end{proof}

We can now prove the following theorem, from \cite[Corollary 1]{hyeon-murphy_noteonthestabilityofprincbund}.

\begin{theorem}\label{thm_ramversusslope}
    For a vector bundle $E$ of rank $r$, the following are equivalent:
    \begin{enumerate}[label=(\roman*)]
        \item $\Fr(E)$ is Ramanathan-(semi)-stable.

        \item $E$ is slope-(semi)-stable.
    \end{enumerate}
\end{theorem}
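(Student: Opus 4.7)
The plan is to combine the machinery already established in the preceding lemmas and remarks, reducing the equivalence to a single degree computation. By Remark \ref{rem_stabsstprincbund}, Ramanathan-(semi)-stability of $\Fr(E)$ is equivalent to the inequality $\deg(s^*V_{\Fr(E)/P_I}) \, (\geq) \, 0$ being satisfied for all reductions $s$ to standard maximal parabolic subgroups $P_I$ of $\GL(r,\C)$. Such parabolics correspond to one-element subsets $I = \{\alpha_{l,l+1}\} \subseteq \triangle$ for some $1 \leq l \leq r-1$, so Lemma \ref{lem_reductsubbundcorr} translates the collection of such reductions into the collection of non-zero proper subbundles $F \subsetneq E$, where $\rk(F) = l$.

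Given such a subbundle $F$ with corresponding reduction $s$, Lemma \ref{lem_tgpisom} identifies $s^*V_{\Fr(E)/P_I} \cong F^* \otimes (E/F)$. I would then carry out the standard degree computation on a curve, using $\deg(A \otimes B) = \rk(B)\deg(A) + \rk(A)\deg(B)$, to get
\begin{equation*}
    \deg\bigl(F^* \otimes (E/F)\bigr) = (r-l)\deg(F^*) + l\deg(E/F) = -r\deg(F) + l\deg(E).
\end{equation*}
Dividing by $l(r-l) > 0$, the inequality $\deg(s^*V_{\Fr(E)/P_I}) \, (\geq) \, 0$ is equivalent to $\mu(F) \, (\leq) \, \mu(E)$.

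Running this equivalence for all pairs $(l, F)$ with $1 \leq l \leq r-1$ and $F \subsetneq E$ of rank $l$ simultaneously establishes both directions: Ramanathan-semistability of $\Fr(E)$ is exactly the condition $\mu(F) \leq \mu(E)$ for every non-zero proper subbundle $F$, which is slope-semistability of $E$, and analogously with strict inequalities for stability. There is no real obstacle here, since all the structural work has been done in Remark \ref{rem_stabsstprincbund} and Lemmas \ref{lem_reductsubbundcorr} and \ref{lem_tgpisom}; the only thing to be careful about is keeping track of the direction of the inequality (noting that the sign flip from passing between $\deg(F^* \otimes (E/F))$ and $\mu(F) - \mu(E)$ is consistent with the conventions of Definitions \ref{def_slopestability} and \ref{def_stabsstprincbund}).
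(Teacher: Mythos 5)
Your proposal is correct and follows essentially the same route as the paper: reduce to standard maximal parabolics via Remark \ref{rem_stabsstprincbund} and Lemma \ref{lem_reductsubbundcorr}, identify $s^*V_{\Fr(E)/P_I}\cong F^*\otimes(E/F)$ by Lemma \ref{lem_tgpisom}, and conclude from the degree computation $-r\deg(F)+l\deg(E)\,(\geq)\,0 \Leftrightarrow \mu(F)\,(\leq)\,\mu(E)$. The only cosmetic slip is the divisor: the natural positive factor to divide by is $rl$ rather than $l(r-l)$, though of course dividing by any positive constant preserves the equivalence.
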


\begin{proof}
    Given a subbundle $F\subsetneq E$ of rank $l$, we have through Lemma \ref{lem_tgpisom}:
    \begin{align*}
        \deg(s^*V_{Fr(E)/P_I}) & = \deg(F^*\otimes(E/F))           \\
                               & =  -\deg(F)(r-l)+\deg(E/F)l       \\
                               & = -\deg(F)(r-l)+\deg(E)l-\deg(F)l \\
                               & = -\deg(F)r+\deg(E)l
    \end{align*}
    Thus, $\deg(s^*V_{\Fr(E)/P_I})(\geq)0$ is equivalent to $\mu(F)(\leq)\mu(E)$.
\end{proof}

Following the discussion after Lemma \ref{lem_reductsubbundcorrsl}, the slope-(semi)-stability of a special vector bundle should be equivalent to the Ramanathan-(semi)-stability its special frame bundle. Indeed, we get the following statement.

\begin{theorem}\label{thm_ramversusslopespecial}
    For a special vector bundle $(E,\tau)$ of rank $r$, the following are equivalent:
    \begin{enumerate}[label=(\roman*)]
        \item $\Fr_\SL(E,\tau)$ is Ramanathan-(semi)-stable.

        \item $E$ is slope-(semi)-stable.
    \end{enumerate}
\end{theorem}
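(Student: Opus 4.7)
The plan is to mirror the argument for Theorem \ref{thm_ramversusslope}, replacing Lemma \ref{lem_reductsubbundcorr} by its $\SL$-analog Lemma \ref{lem_reductsubbundcorrsl} and establishing the corresponding analog of Lemma \ref{lem_tgpisom} for the special frame bundle. By Remark \ref{rem_stabsstprincbund}, Ramanathan-(semi)-stability of $\Fr_{\SL}(E,\tau)$ reduces to the condition $\deg(s^*V_{\Fr_{\SL}(E,\tau)/P_I}) (\geq) 0$ on reductions $s$ to standard maximal parabolics $P_I\subseteq \SL(r,\C)$, and by Lemma \ref{lem_reductsubbundcorrsl} such reductions are parametrised by subbundles $F\subsetneq E$ of rank $l$, with $I=\{\alpha_{l,l+1}\}$.

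The heart of the proof is to establish that $s^*V_{\Fr_{\SL}(E,\tau)/P_I}\cong F^*\otimes (E/F)$. I would repeat the cocycle computation from Lemma \ref{lem_tgpisom}: cocycles of $\Fr_{\SL}(E,\tau)$ compatible with $F$ take values in $P_I^{\SL}:=P_I\cap\SL(r,\C)$, and cocycles of the vertical tangent bundle are obtained by composing them with the adjoint action of $P_I^{\SL}$ on $\Lie(\SL(r,\C))/\Lie(P_I^{\SL})$. The key observation is that the scalar matrices $\C\cdot\mathrm{Id}$ lie in $\Lie(P_I)$ for every parabolic, so the inclusion $\Lie(\SL(r,\C))\hookrightarrow\Lie(\GL(r,\C))$ descends to a $P_I^{\SL}$-equivariant isomorphism
\[
    \Lie(\SL(r,\C))/\Lie(P_I^{\SL}) \xrightarrow{\sim} \Lie(\GL(r,\C))/\Lie(P_I).
\]
Under this identification, the block-matrix calculation of Lemma \ref{lem_tgpisom} yields the same cocycles $x\mapsto(\alpha_{i,j}(x)^{-1})^T\otimes\delta_{i,j}(x)$, so $s^*V_{\Fr_{\SL}(E,\tau)/P_I}\cong F^*\otimes(E/F)$.

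With this identification in place, the degree calculation in the proof of Theorem \ref{thm_ramversusslope} applies verbatim, giving $\deg(s^*V_{\Fr_{\SL}(E,\tau)/P_I}) = -\deg(F)\,r + \deg(E)\,l$, so that $\deg(s^*V_{\Fr_{\SL}(E,\tau)/P_I})(\geq)0$ is equivalent to $\mu(F)(\leq)\mu(E)$. Ranging over all subbundles $F$ then yields the equivalence of the two (semi)-stability conditions. The main obstacle is the vertical tangent bundle identification, i.e.\ checking that passing from $\GL(r,\C)$ to $\SL(r,\C)$ has no effect on $V_{\xi/P_I}$; once this is secured, the remainder of the proof is a literal transcription of the $\GL(r,\C)$ case.
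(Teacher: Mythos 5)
Your proof is correct, and it is exactly the ``direct'' route that the paper mentions in one line (``We can prove Theorem \ref{thm_ramversusslopespecial} in the same way as Theorem \ref{thm_ramversusslope}'') but does not spell out; the proof the paper actually writes down is the alternative one via Lemma \ref{lem_derived}. You fill in the one point the one-liner glosses over: that the vertical tangent bundle does not change when passing from $\GL(r,\C)$ to $\SL(r,\C)$, because $\C\cdot I_r\subseteq\Lie(P_I)$ forces $\Lie(\SL(r,\C))+\Lie(P_I)=\Lie(\GL(r,\C))$, so the inclusion $\Lie(\SL(r,\C))\hookrightarrow\Lie(\GL(r,\C))$ descends to a $P_I^{\SL}$-equivariant isomorphism $\Lie(\SL(r,\C))/\Lie(P_I^{\SL})\xrightarrow{\sim}\Lie(\GL(r,\C))/\Lie(P_I)$ by the second isomorphism theorem. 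This is precisely what is needed to transport Lemma \ref{lem_tgpisom}, and the degree computation from Theorem \ref{thm_ramversusslope} then applies verbatim.

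The paper's own written-out argument is different and more conceptual: it invokes Lemma \ref{lem_derived}, which works for any complex reductive $G$ and its commutator subgroup $[G,G]$, applied here to $[\GL(r,\C),\GL(r,\C)]=\SL(r,\C)$. The point there is that the extension of $\Fr_\SL(E,\tau)$ to $\GL(r,\C)$ is $\Fr(E)$, and Lemma \ref{lem_derived} shows Ramanathan-(semi)-stability is preserved under that extension; Theorem \ref{thm_ramversusslope} then finishes. Your route is more elementary and self-contained --- it never needs the radical decomposition $G=[G,G]\Rad(G)$ or the commutative determinant diagram (\ref{eq_deriveddiagram}) --- but it is also more case-specific. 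The paper's route costs the extra Lemma \ref{lem_derived} but explains at once why the $\SL(r,\C)$ and $\GL(r,\C)$ notions coincide, and why the analogous statement holds for any $G$ and $[G,G]$. Both arguments are valid; yours is a legitimate fleshing-out of the route the paper declines to write down.
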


We can prove Theorem \ref{thm_ramversusslopespecial} in the same way as Theorem \ref{thm_ramversusslope}. Alternatively, we can invoke the following lemma, which generalizes the connection between Theorems \ref{thm_ramversusslope} and \ref{thm_ramversusslopespecial} and shows why the notion of stability and semistability of a principal $[G,G]$-bundle coincides with that of the associated $G$-bundle.

\begin{lemma}\label{lem_derived}
    For a principal $[G,G]$-bundle $\xi_1$, the following are equivalent:
    \begin{enumerate}[label=(\roman*)]
        \item $\xi_1$ is Ramanathan-(semi)-stable.

        \item The extension $\xi=\xi_1(G)$ through the inclusion $[G,G]\hookrightarrow G$ is Ramanathan-(semi)-stable.
    \end{enumerate}
\end{lemma}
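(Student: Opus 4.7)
The plan is to reduce the claim to the adjoint-bundle characterization of Ramanathan-(semi)-stability from Remark \ref{rem_adbundleexactseq}, using the structural fact that maximal parabolic subgroups of $[G,G]$ correspond to maximal parabolic subgroups of $G$ by intersection. Assuming $G$ is connected reductive, we have $G = Z(G)^\circ \cdot [G,G]$ with $Z(G)^\circ$ contained in every parabolic (since it lies in every maximal torus), and $[G,G]$ shares its root system with $G$. Hence the map $P \mapsto P \cap [G,G]$ is a bijection between maximal parabolics of $G$ and those of $[G,G]$, with inverse $P_1 \mapsto P_1 \cdot Z(G)^\circ$.

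Next I would transfer reductions across this correspondence. Since $\xi = \xi_1 \times_{[G,G]} G$ and the inclusion $[G,G] \hookrightarrow G$ induces an isomorphism $[G,G]/P_1 \xrightarrow{\sim} G/P$ (surjective because $G = [G,G] \cdot Z(G)^\circ$ and $Z(G)^\circ \subseteq P$, injective because $P \cap [G,G] = P_1$), we obtain a canonical isomorphism $\xi/P \cong \xi_1/P_1$ over $X$, and hence a bijection between reductions $s_1 : X \to \xi_1/P_1$ and $s : X \to \xi/P$, satisfying $s^*\xi \cong s_1^*\xi_1 \times_{P_1} P$. To compare the adjoint bundles I would use the decomposition $\Lie(G) = \mathfrak{z} \oplus \Lie([G,G])$ of $[G,G]$-representations, where $\mathfrak{z}$ is the centre of $\Lie(G)$ and carries the trivial adjoint action. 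This yields $\ad(\xi) \cong \ad(\xi_1) \oplus (X \times \mathfrak{z})$ as holomorphic vector bundles; applying the same decomposition to $\Lie(P) = \mathfrak{z} \oplus \Lie(P_1)$ gives $\ad(s^*\xi) \cong \ad(s_1^*\xi_1) \oplus (X \times \mathfrak{z})$. In particular, $\deg\ad(s^*\xi) = \deg\ad(s_1^*\xi_1)$.

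Combining this with Remarks \ref{rem_adbundleexactseq} and \ref{rem_stabsstprincbund}, Ramanathan-(semi)-stability of each of $\xi$ and $\xi_1$ amounts to the inequality $\deg\ad(s^*\xi) = \deg\ad(s_1^*\xi_1) \,(\leq)\, 0$ for every reduction to a standard maximal parabolic, and the equivalence of (i) and (ii) follows. The main delicate point will be setting up the parabolic correspondence and the induced bijection of reductions, since it requires some care regarding which parabolics are maximal in $[G,G]$ versus in $G$; once these identifications are in place, the adjoint-bundle comparison is a brief Lie-theoretic computation.
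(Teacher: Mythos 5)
Your proposal is correct and takes essentially the same route as the paper: you set up the bijection between maximal parabolics of $G$ and of $[G,G]$ via the radical (equivalently $Z(G)^\circ$), identify $G/P \cong [G,G]/P_1$ to transfer reductions, and compare adjoint bundles using the decomposition $\Lie(P) = \Lie(P_1)\oplus\zbf(\Lie(G))$ on which $P_1$ acts trivially on the second factor. Your phrasing of the key step as a genuine direct-sum isomorphism $\ad(s^*\xi)\cong\ad(s_1^*\xi_1)\oplus(X\times\mathfrak{z})$ is a slightly cleaner packaging of the paper's cocycle computation, which only records the equality of the determinant line bundles, but the content is the same and both yield $\deg\ad(s^*\xi)=\deg\ad(s_1^*\xi_1)$, after which Remark \ref{rem_adbundleexactseq} finishes the argument.
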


\begin{proof}
    Let $P_1$ be a maximal parabolic subgroup of $[G,G]$. Let $P=P_1\Rad(G)$ be the subgroup of $G$ generated by $P_1$ and the \textit{radical} $\Rad(G)$ of $G$, as defined in \cite[IV. 11.21]{borel_linearalgebraicgroups}, then $P$ is a maximal parabolic subgroup of $G$, since $G=[G,G]\Rad(G)$ following \cite[IV. 14.2 Proposition]{borel_linearalgebraicgroups}, and the fact that $\Rad(G)$ is the connected component of the center $\Zbf(G)$ from \cite[IV. 11.21 Proposition]{borel_linearalgebraicgroups}.
    Furthermore, we have a $[G,G]$-equivariant isomorphism of complete varieties:
    \begin{equation}\label{eq_derived}
        G/P=([G,G]P)/P\cong [G,G]/([G,G]\cap P)=[G,G]/P_1.
    \end{equation}

    Let $s_1:X\rightarrow\xi_1/P_1$ be a reduction of $\xi_1$ to $P_1$.
    Using the isomorphism in (\ref{eq_derived}), we induce a $[G,G]$-equivariant isomorphism $\xi_1/P_1\cong\xi/P$, so the section $s_1:X\rightarrow\xi_1/P_1$ corresponds to a section $s:X\rightarrow\xi/P$. Thus,
    $s^*\xi$ is isomorphic to an extension of $s_1^*\xi_1$ through $P_1\hookrightarrow P$, and both share the same cocycles $(\sigma_{i,j})_{i,j\in J}$ that map into $P_1$.

    Using that $\Lie(P_1)=\Lie([G,G])\cap\Lie(P)$, and that $\Lie(P)=\Lie(P_1)\oplus\Lie(\Rad(G))$ as Lie algebras, we claim that for all $i,j\in J$, the following diagram commutes:
    \begin{equation}
        \begin{aligned}\label{eq_deriveddiagram}
            \begin{tikzpicture}[scale=1.3]
                \node (A) at (-1.5,1) {$U_i\cap U_j$};
                \node (B) at (0,1) {$P_1$};
                \node (C) at (1.5,1) {$\GL(\Lie(P_1))$};
                \node (D) at (3,1) {$\C^\times$};
                \node(E) at (0,0) {$P$};
                \node(F) at (1.5,0) {$\GL(\Lie(P))$};
                \node(G) at (3,0) {$\C^\times$};
                \path[->,font=\scriptsize,>=angle 90]
                (A) edge node[above]{$\sigma_{i,j}$} (B)
                (B) edge node[above]{$\Ad$} (C)
                (C) edge node[above]{$\det$} (D)
                (A) edge node[above]{$\sigma_{i,j}$} (E)
                (E) edge node[above]{$\Ad$} (F)
                (F) edge node[above]{$\det$} (G);
                \draw[right hook->] (B) -- (E);
                \draw[double,double distance between line centers=0.2em] (D) -- (G);
            \end{tikzpicture}
        \end{aligned}
    \end{equation}

    Since the adjoint representation of $P_1$ acts on $\Lie(\Rad(G))$ through the identity, the morphisms on the bottom row of (\ref{eq_deriveddiagram}) have the same determinants as the top row of (\ref{eq_deriveddiagram}). Hence, the diagram commutes.

    Since the top row of (\ref{eq_deriveddiagram}) gives us cocycles of $\det(\ad(s_1^*\xi_1))$, and since the bottom row of (\ref{eq_deriveddiagram}) gives us cocycles of $\det(\ad(s^*\xi))$, these two line bundles are isomorphic, so $\deg(\ad(s_1^*\xi_1))=\deg(\ad(s^*\xi))$. Using Remark \ref{rem_adbundleexactseq}, the claim follows.
\end{proof}

Lastly, we recall an important use of Remark \ref{rem_adbundleexactseq}. The adjoint bundle of a principal $G$-bundle determines its Ramanathan-semistability. To prove this, we use \cite[Theorem 3.18]{ramanan-ramanathan_someremarksoninstabilityflag}.

\begin{theorem}\label{thm_instflag}
    Let $\varphi:G\rightarrow H$ be a homomorphism of complex reductive groups that maps the radical $\Rad(G)$ into $\Rad(H)$. For a Ramanathan-semistable principal $G$-bundle $\xi$, the extension $\xi(H)$ of $\xi$ to $H$ is Ramanathan-semistable.
\end{theorem}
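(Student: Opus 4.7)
The plan is to argue by contrapositive: assume $\xi(H)$ is not Ramanathan-semistable and produce a destabilizing reduction of $\xi$. By Remark \ref{rem_adbundleexactseq}, it suffices to show that a reduction $s_H : X \to \xi(H)/Q$ to a maximal parabolic $Q \subseteq H$ with $\deg(\ad(s_H^*\xi(H))) > 0$ yields a reduction $s_G : X \to \xi/P$ to a maximal parabolic $P \subseteq G$ with $\deg(\ad(s_G^*\xi)) > 0$.

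The first step is group-theoretic: set $P := \varphi^{-1}(Q) \subseteq G$. Since $\varphi(G)$ is a closed reductive subgroup of $H$ and $\varphi(G) \cap Q$ is parabolic in $\varphi(G)$, the natural map $G/\varphi^{-1}(Q) \hookrightarrow H/Q$ realizes $G/P$ as a closed subvariety of a projective variety, so $P$ is parabolic in $G$. Enlarge $P$ to a maximal parabolic if necessary, and note that $P$ automatically contains $\Rad(G)$ by the radical hypothesis.

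The second step extracts the reduction $s_G$. Using $\xi(H)/Q = \xi \times_G (H/Q)$, the $G$-orbit of the identity coset in $H/Q$ is isomorphic to $G/P$, yielding a closed immersion $\xi/P \hookrightarrow \xi(H)/Q$. The section $s_H$ need not land in this subvariety, so one follows Ramanan--Ramanathan and uses a limit argument along a dominant one-parameter subgroup of $Q$ to deform $s_H$ into the closed $G$-orbit. The resulting section $s_G : X \to \xi/P$ satisfies $\deg(\ad(s_G^*\xi(H))) \geq \deg(\ad(s_H^*\xi(H))) > 0$, since the degeneration only increases the instability degree.

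The third step compares the adjoint bundles of $\xi$ and $\xi(H)$. The Lie algebra map $\Lie(\varphi) : \Lie(P) \to \Lie(Q)$ induces a morphism $\ad(s_G^*\xi) \to \ad(s_G^*\xi(H))$ whose kernel and cokernel are associated bundles for $P$-representations coming from $\ker(\Lie(\varphi))$ and $\Lie(Q)/\Lie(\varphi)(\Lie(P))$ respectively. The hypothesis $\varphi(\Rad(G)) \subseteq \Rad(H)$ forces these representations to be built from trivial ones after passing to the semisimple quotient, so the associated line bundles (obtained by taking determinants, as in Remark \ref{rem_adbundleexactseq}) have degree zero. Consequently $\deg(\ad(s_G^*\xi)) = \deg(\ad(s_G^*\xi(H))) > 0$, contradicting the Ramanathan-semistability of $\xi$.

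The main obstacle is the second step: the flag bundles $\xi/P$ and $\xi(H)/Q$ differ in general, and extracting a reduction of $\xi$ from a destabilizing reduction of $\xi(H)$ is the genuinely nontrivial point. The radical hypothesis is also essential for the degree comparison in the final step: without it, the cocharacter $\C^\times \hookrightarrow \SL(2,\C)$ gives an easy counterexample, since it sends degree-one line bundles (always semistable) to non-semistable $\SL(2,\C)$-bundles of the form $L \oplus L^{-1}$.
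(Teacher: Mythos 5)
The paper does not supply a proof of Theorem~\ref{thm_instflag}; it is cited directly from Ramanan--Ramanathan's work on the instability flag, with the remark that their proof proceeds via \emph{admissible reductions}. So there is no internal argument to compare against, and the question becomes whether your blind attempt stands on its own. It does not: the first and second steps both contain genuine gaps, and the second gap is precisely the nontrivial content of the theorem.

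The first problem is the claim that $P := \varphi^{-1}(Q)$ is parabolic, or becomes so after enlargement. This is false, even under the radical hypothesis. Take $G = \SL(2,\C)\times\SL(2,\C)$, $H = \SL(4,\C)$, and $\varphi$ the tensor-product representation (both groups are semisimple, so the radical hypothesis is vacuous). If $Q$ is the parabolic stabilizing the line $\langle e_1\otimes e_1 + e_2\otimes e_2\rangle$, then $\varphi^{-1}(Q)$ is a diagonal copy of $\SL(2,\C)$ in $G$ (up to a finite group), which is reductive, not parabolic, and is not contained in any proper parabolic. Equivalently, the $G$-orbit of $eQ$ in $H/Q$ is not closed, so there is no ``closed immersion'' $\xi/P \hookrightarrow \xi(H)/Q$ to pull $s_H$ back along. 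Your argument that $\varphi(G)\cap Q$ is parabolic in $\varphi(G)$ does not hold in general.

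The second problem is the ``limit argument along a dominant one-parameter subgroup,'' which you flag as the main obstacle. This is indeed where the actual mathematics lives, and your sketch does not supply it: it requires the Kempf--Rousseau/Hesselink theory of optimal destabilizing one-parameter subgroups (or Ramanan--Ramanathan's reformulation in terms of admissible reductions) to single out a canonical reduction of $\xi(H)$ whose stabilizer in $G$ \emph{is} parabolic, and to control degrees under this passage. Deferring to Ramanan--Ramanathan at exactly this point makes the argument circular, since their Theorem~3.18 is the statement being proved. The third step also presumes the degree comparison goes through after the unspecified degeneration, which would need to be justified once the degeneration is made precise. Your closing observation --- that $\C^\times\hookrightarrow\SL(2,\C)$ violates the radical hypothesis and gives $L\oplus L^{-1}$ as a non-semistable extension of a semistable line bundle --- is correct and is a good sanity check on the necessity of the hypothesis, but it does not repair the gaps above.
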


This theorem, proven in \cite{ramanan-ramanathan_someremarksoninstabilityflag} by defining admissible reductions, will be helpful to us in later sections as well.
We now present the result of
\cite[3.18 Corollary]{ramanathan_moduliforprincipalbundlesI}.

\begin{lemma}\label{lem_adbundle}
    A principal $G$-bundle $\xi$ is Ramanathan-semistable if and only if its adjoint bundle $\ad(\xi)$ is slope-semistable.
\end{lemma}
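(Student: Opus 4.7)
The plan is to handle the two implications separately: the forward direction follows by applying Theorem~\ref{thm_instflag} to the adjoint representation itself, and the converse follows directly from Remark~\ref{rem_adbundleexactseq} together with the observation that any parabolic reduction produces a subbundle of $\ad(\xi)$.

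For $(i)\Rightarrow$ slope-semistability of $\ad(\xi)$, I would first identify the frame bundle $\Fr(\ad(\xi))$ with the extension $\xi(\GL(\Lie(G)))$ along the adjoint representation $\Ad:G\rightarrow\GL(\Lie(G))$, so that Theorem~\ref{thm_ramversusslope} reduces the problem to checking Ramanathan-semistability of this extension. To invoke Theorem~\ref{thm_instflag} with $\varphi=\Ad$, I must verify $\Ad(\Rad(G))\subseteq\Rad(\GL(\Lie(G)))$. Since $G$ is reductive, $\Rad(G)$ is the connected component of the center $\Zbf(G)$, so it acts trivially on $\Lie(G)$ by conjugation; hence $\Ad(\Rad(G))=\{1\}$, which lies in every radical. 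Theorem~\ref{thm_instflag} then yields Ramanathan-semistability of $\Fr(\ad(\xi))$, and Theorem~\ref{thm_ramversusslope} translates this into slope-semistability of $\ad(\xi)$.

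For the converse, suppose $\ad(\xi)$ is slope-semistable. As recalled in Remark~\ref{rem_adbundleexactseq}, the existence of a nondegenerate adjoint-invariant form on $\Lie(G)$ makes $\ad(\xi)$ self-dual, so $\deg(\ad(\xi))=0$ and $\mu(\ad(\xi))=0$. For any maximal parabolic $P\subseteq G$ and any reduction $s:X\rightarrow\xi/P$, the short exact sequence from Remark~\ref{rem_adbundleexactseq} exhibits $\ad(s^*\xi)$ as a subbundle of $\ad(\xi)$, so slope-semistability forces $\mu(\ad(s^*\xi))\leq 0$, and hence $\deg(\ad(s^*\xi))\leq 0$. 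The alternate characterization of Ramanathan-semistability from Remark~\ref{rem_adbundleexactseq} then gives $(i)$.

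The main obstacle, modest though it is, lies in the forward direction: one must recognize that Theorem~\ref{thm_instflag} of Ramanan-Ramanathan is tailored precisely to transport semistability through representations that behave well on the radical, and that the adjoint representation is the prototypical such example because its kernel always contains the center of $G$. Once this is in place the identification $\Fr(\ad(\xi))\cong\xi(\GL(\Lie(G)))$ and the invocation of Theorem~\ref{thm_ramversusslope} are routine, and the converse is essentially a one-line consequence of Remark~\ref{rem_adbundleexactseq}.
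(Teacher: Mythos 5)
Your proposal is correct and follows essentially the same route as the paper: the forward direction via Theorem~\ref{thm_instflag} applied to $\Ad$ (observing $\Ad(\Rad(G))$ is trivial because $\Rad(G)\subseteq\Zbf(G)=\ker\Ad$) together with Theorem~\ref{thm_ramversusslope}, and the converse by viewing $\ad(s^*\xi)$ as a subbundle of the degree-zero bundle $\ad(\xi)$ and invoking Remark~\ref{rem_adbundleexactseq}. The only difference is cosmetic: you spell out the identification $\Fr(\ad(\xi))\cong\xi(\GL(\Lie(G)))$ explicitly, which the paper leaves implicit.
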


\begin{proof}
    The forward implication is a consequence of Theorem \ref{thm_instflag}: the radical of a reductive group is a subgroup of the center, from \cite[IV. 11.21 Proposition]{borel_linearalgebraicgroups}, and therefore $\Rad(G)$ gets mapped to $0$ under the adjoint representation, whose kernel is $\Zbf(G)$. Ramanathan-semistability is preserved in the frame bundle of the adjoint bundle $\ad(\xi)$, which is equivalent to slope-semistability due to Theorem \ref{thm_ramversusslope}.

    For the reverse implication, let $s^*\xi$ be a reduction of $\xi$ to a maximal parabolic subgroup $P$ of $G$. Due to Remark \ref{rem_adbundleexactseq}, it suffices to show $\deg(\ad(s^*\xi))\leq 0$. Since $\ad(\xi)$ has degree $0$, we have $\mu(\ad(s^*\xi))\leq \mu(\ad(\xi))=0$, as $\ad(s^*\xi)$ can be viewed as a subbundle of $\ad(\xi)$. From this, $\deg(\ad(s^*\xi))\leq 0$ follows.
\end{proof}

The reverse implication of Lemma \ref{lem_adbundle} is also true for stability, as we can replace the inequalities with strict inequalities in the proof above. However, the same is not true for the forward implication. In \cite[Lemma 2]{hyeon-murphy_noteonthestabilityofprincbund}, it is explained that for stability, the forward implication is already false if $G$ is not semisimple, as in the case of $G=\GL(r,\C)$.

\section{Decorated vector bundles}\label{sec_decorated}

Recall that a \textit{decorated vector bundle} $(E,s)$ is a vector bundle $E$ equipped with a reduction $s$ to a complex reductive subgroup $G \subseteq \GL(r;\C)$. Examples include symplectic and special-orthogonal vector bundles, as well as vector bundles with trivial determinants, i.e., special vector bundles. As we did for special vector bundles in Theorem \ref{thm_ramversusslopespecial}, we recall conditions for symplectic and special-orthogonal vector bundles to be Ramanathan-(semi)-stable in Theorems \ref{thm_ramversusslopesymp} and \ref{thm_ramversusslopeorth}.

\subsection{Symplectic vector bundles}\label{subsec_sympvecbun}
\begin{definition}\label{def_sympbund}
    Let $E$ be a holomorphic vector bundle of rank $r$ on a compact Riemann surface $X$, then:
    \begin{enumerate}[label=(\alph*)]
        \item A \textit{symplectic vector bundle} $(E,\beta)$ is a vector bundle $E$ admitting a symplectic form $\beta: E \oplus  E \to X\times\C$, i.e., a nondegenerate antisymmetric form.

        \item For a symplectic vector bundle $(E,\beta)$, a subbundle $F\subseteq E$ is:
              \begin{enumerate}[label=(b.\alph*)]
                  \item \textit{isotropic} if for all $x\in X$, $F_x\subseteq F_x^\perp:=\{v\in E_x\ \vert\ \forall w\in F_x:\beta_x(v,w)=0\}$.

                  \item \textit{co-isotropic} if for all $x\in X$, $F_x^\perp\subseteq F_x$.

                  \item \textit{Lagrangian} if for all $x\in X$, $F_x =F_x^\perp$.
              \end{enumerate}
    \end{enumerate}
\end{definition}

We claim that the Ramanathan-(semi)-stability of such bundles amounts to checking slope conditions on isotropic subbundles. To see this, we need to understand the root system of the \textit{symplectic group} $\Sp(2n,\C)$, where $r=2n$. In most literature, $\Sp(2n,\C)$ is defined as:
\begin{equation*}
    \left\{A\in\GL(2n,\C)\ \middle\vert\ A^TW_{2n}A=W_{2n}\right\},
\end{equation*}
where $W_n$ represents the standard symplectic form $\omega(x,y):=\sum_{i=1}^n x_{n+i}y_i-x_iy_{n+i}$, i.e.:
\begin{align}
    \nonumber W_{2n} & :=\begin{pmatrix}
                             0    & I_n \\
                             -I_n & 0
                         \end{pmatrix}\in\GL(2n,\C),\hspace{3em} I_n:=\begin{pmatrix}
                                                                          1 & 0                                   & 0 \\
                                                                          0 & \rotatebox[origin=c]{-35}{$\cdots$} & 0 \\
                                                                          0 & 0                                   & 1
                                                                      \end{pmatrix}\in\GL(n,\C).         \\
    \intertext{However, for our purposes, we define $\Sp(2n,\C)$ as:}
    \nonumber        & \rlap{\hspace{2.8em}$\Sp(2n,\C):=\left\{A\in\GL(2n,\C)\ \middle\vert\ A^TM_{2n}A=M_{2n}\right\},$} \\
    \intertext{where $M_{2n}$ represents the symplectic form $\omega(x,y):=\sum_{i=1}^n x_{2n+1-i}y_i-x_iy_{2n+1-i}$, i.e.:}
    M_{2n}           & :=\begin{pmatrix}
                             0      & K_{n} \\
                             -K_{n} & 0
                         \end{pmatrix}\in\GL(2n,\C),\hspace{1.93em} K_n:=\begin{pmatrix}
                                                                             0 & 0                                  & 1 \\
                                                                             0 & \rotatebox[origin=c]{35}{$\cdots$} & 0 \\
                                                                             1 & 0                                  & 0
                                                                         \end{pmatrix}\in\GL(n,\C). \label{eq_m2n}
\end{align}

Both variants are isomorphic by conjugation in $\GL(2n,\C)$. Our variant allows us to intersect our earlier choices of Cartan and Borel subgroups of $\GL(2n,\C)$ to obtain Cartan and Borel subgroups of $\Sp(2n,\C)$. This has the advantage that all standard parabolic subgroups of $\Sp(2n,\C)$ appear as upper-right block matrices, allowing us to more easily give symplectic versions of
Lemmas \ref{lem_reductsubbundcorr} and \ref{lem_tgpisom}. However, the root system of $\Sp(2n,\C)$ is quite different to that of $\GL(2n,\C)$ and $\SL(2n,\C)$. Standard parabolic subgroups of $\Sp(2n,\C)$ are in correspondence with subsets $I$ of the set of simple roots $\triangle$:
\begin{equation}\label{eq_simplerootssp}
    \triangle=\left\{\alpha_{l,l+1}\in\Lie(T)^*\ \middle\vert\ 1\leq l \leq n-1\right\}\sqcup\left\{2\alpha_n\in\Lie(T)^*\ \middle\vert\ \alpha_n(A):=A_{n,n}\right\},
\end{equation}
using notation from (\ref{eq_simpleroots}).

\begin{example}\label{ex_sp4}
    The symplectic group $\Sp(4,\C)$ has the simple roots $\triangle=\{\alpha_{1,2},2\alpha_2\}$. The nontrivial standard parabolic subgroups are:
    \begin{align*}
        P_I & =\left\{\begin{pmatrix}
                          * & * & * & * \\
                          0 & * & * & * \\
                          0 & * & * & * \\
                          0 & 0 & 0 & * \\
                      \end{pmatrix}\in\Sp(4,\C)\right\}, & I & =  \{\alpha_{1,2}\}, \\
        P_I & =\left\{\begin{pmatrix}
                          * & * & * & * \\
                          * & * & * & * \\
                          0 & 0 & * & * \\
                          0 & 0 & * & * \\
                      \end{pmatrix}\in\Sp(4,\C)\right\}, & I & =  \{2\alpha_2\}.
    \end{align*}
\end{example}
Note the shape of these matrices: The sizes of the diagonal blocks are mirrored from upper-left to lower-right. Generally, the shape of off-center blocks is determined by simple roots $\alpha_{l,l+1}$, whereas blocks split down the middle are determined by the simple root $2\alpha_n$. As we did for special vector bundles in Definition \ref{def_specialframebun}, we construct symplectic frame bundles.

\begin{definition}\label{def_specialsympframebun}
    For a symplectic vector bundle $(E,\beta)$, a \textit{symplectic frame bundle} $\Fr_\Sp(E,\beta)$ is a principal $\Sp(2n,\C)$-bundle, whose fiber at $x\in X$ is given by:
    \begin{equation*}
        \Symp_{\C}^\beta(\C^{2n},E_x),
    \end{equation*}
    consisting of isomorphisms $\C^{2n}\rightarrow E_x$ compatible with $\beta$ on $E_x$, and $\omega$ on $\C^{2n}$ induced by $M_{2n}$ from (\ref{eq_m2n}).
\end{definition}

We obtain a symplectic variant of Lemmas \ref{lem_reductsubbundcorr} and \ref{lem_reductsubbundcorrsl}.
\begin{lemma}\label{lem_reductsubbundcorrsymp}
    The following are in correspondence:
    \begin{enumerate}[label=(\roman*)]
        \item Isotropic subbundles $F\subsetneq E$ of $(E,\beta)$ of rank $l$.

        \item Reductions $s^*\Fr_\Sp(E,\beta)$ of $\Fr_\Sp(E,\beta)$ to $P_I$, where:
              \begin{align*}
                  I & =\{\alpha_{l,l+1}\}\subseteq\triangle, & 1 & \leq l \leq n-1 , \\
                  I & =\{2\alpha_n\}\subseteq\triangle,      & l & = n.
              \end{align*}
    \end{enumerate}
\end{lemma}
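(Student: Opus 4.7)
The plan is to adapt the correspondence from Lemmas \ref{lem_reductsubbundcorr} and \ref{lem_reductsubbundcorrsl} to the symplectic setting, with the extra ingredient that the standard maximal parabolics $P_I$ of $\Sp(2n,\C)$ are precisely the stabilizers in $\Sp(2n,\C)$ of the isotropic subspaces $\C^l\times\{0\}^{2n-l}\subseteq\C^{2n}$ for $1\leq l\leq n$, with the Lagrangian case $l=n$ producing the distinguished root $2\alpha_n$.

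As a preliminary step, I would check that $\C^l\times\{0\}^{2n-l}$ is isotropic for the form $\omega$ represented by $M_{2n}$ from (\ref{eq_m2n}): since $\omega(e_i,e_j)\neq 0$ only when $i+j=2n+1$, any pair of basis vectors with $1\leq i,j\leq l\leq n$ satisfies $i+j\leq 2n<2n+1$, so $\omega$ vanishes on this subspace, and in the case $l=n$ a dimension count gives that it is Lagrangian.

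For the direction \textit{(i)} $\Rightarrow$ \textit{(ii)}, given an isotropic subbundle $F\subsetneq E$ of rank $l$, I would associate the section $s:X\rightarrow\Fr_\Sp(E,\beta)/P_I$ that sends $x$ to the class of symplectic isomorphisms $f:(\C^{2n},\omega)\rightarrow(E_x,\beta_x)$ restricting to an isomorphism $\C^l\times\{0\}^{2n-l}\rightarrow F_x$. Such $f$ exist by the symplectic analogue of extending a basis: any isotropic subspace of a symplectic vector space can be completed to a symplectic basis in the standard form, and the ambiguity of that completion is exactly killed by quotienting by $P_I$. Holomorphicity of $s$ follows from the same cocycle computation as in Remark \ref{rem_subbundcocy}, now with transition functions landing in $P_I$. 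For the converse \textit{(ii)} $\Rightarrow$ \textit{(i)}, setting $F_x:=s(x)(\C^l\times\{0\}^{2n-l})$ is well-defined because $P_I$ stabilizes this subspace, and each fiber $F_x$ is isotropic because the representatives $f$ are symplectic; holomorphicity of $s$ glues the fibers into a subbundle.

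The main obstacle is the identification of the stabilizer of $\C^l\times\{0\}^{2n-l}$ in $\Sp(2n,\C)$ with the standard maximal parabolic $P_{\{\alpha_{l,l+1}\}}$ for $l<n$, respectively $P_{\{2\alpha_n\}}$ for $l=n$. To establish this, I would extend the block-matrix description from Example \ref{ex_sp4}: a matrix $A\in\GL(2n,\C)$ preserving $\C^l\times\{0\}^{2n-l}$ is block upper-triangular with diagonal blocks of sizes $(l,2n-2l,l)$, and the constraint $A^TM_{2n}A=M_{2n}$ forces the first and third diagonal blocks to be related by the involution $B\mapsto K_l(B^{-1})^TK_l$, while the middle block lies in $\Sp(2n-2l,\C)$. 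The resulting shape is exactly the one obtained by omitting the simple root $\alpha_{l,l+1}$ from $\triangle$; when $l=n$ the middle block degenerates and the two outer blocks of size $n$ merge into a single block split down the middle, corresponding to the removal of $2\alpha_n$. Together with the preliminary isotropy check and the two constructions above, this yields the claimed bijection.
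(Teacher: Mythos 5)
Your proposal is correct and takes essentially the same route as the paper: the section is defined by the symplectic frames adapted to the isotropic subbundle (your requirement that $f$ carry $\C^l\times\{0\}^{2n-l}$ onto $F_x$ automatically forces $\C^{2n-l}\times\{0\}^l\to F_x^\perp$, which is how the paper phrases it), and the inverse direction recovers $F_x=s(x)(\C^l\times\{0\}^{2n-l})$ exactly as in Lemma \ref{lem_reductsubbundcorr}. The extra material you supply — the isotropy check for $\C^l\times\{0\}^{2n-l}$ and the block-matrix identification of the stabilizer with $P_{\{\alpha_{l,l+1}\}}$, resp.\ $P_{\{2\alpha_n\}}$ — is correct and only makes explicit what the paper leaves to Example \ref{ex_sp4} and the block description used in Lemma \ref{lem_tgpisomsymp}.
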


\begin{proof}
    \sloppy
    For \textit{(i)} to \textit{(ii)}, we induce a section $s:X\rightarrow\Fr_\Sp(E,\beta)/P_I$, $x\mapsto s(x)$, where $s(x)$ represents the symplectic isomorphisms $\C^{2n}\rightarrow E_x$ that restrict to symplectic isomorphisms $\C^{2n-l}\times\{0\}^l\rightarrow F_x^\perp$ and $\C^l\times\{0\}^{2n-l}\rightarrow F_x$.

    For \textit{(ii)} to \textit{(i)}, we argue analogously to Lemma \ref{lem_reductsubbundcorr} to construct an isotropic subbundle $F\subsetneq E$ with the isotropic fibers $F_x=s(x)(\C^l\times\{0\}^{2n-l})\subseteq s(x)(\C^{2n-l}\times\{0\}^l)=F^\perp_x$ for all $x\in X$.
\end{proof}

We then obtain a symplectic variant of Lemma \ref{lem_tgpisom}, allowing us to compare slope-(semi)-stability and Ramanathan-(semi)-stability later.

\begin{lemma}\label{lem_tgpisomsymp}
    For an isotropic subbundle $F\subsetneq E$ of $(E,\beta)$ of rank $l$, and the corresponding reduction $s:X\rightarrow\Fr_\Sp(E)/P_I$ from Lemma \ref{lem_reductsubbundcorrsymp}, we have:
    \begin{align}
        \det(s^*V_{\Fr_\Sp(E,\beta)/P_I}) & \cong\det(F^*\otimes (F^\perp/F))\otimes\left(\det(F^*)^{\otimes(l+1)}\right), & F & \neq F^\perp, \label{eq_tgpisomorth1} \\
        \det(s^*V_{\Fr_\Sp(E,\beta)/P_I}) & \cong\det(F^*)^{\otimes(l+1)},                                                 & F & =F^\perp. \label{eq_tgpisomorth2}
    \end{align}
\end{lemma}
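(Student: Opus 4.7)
The plan is to mirror the cocycle computation of Lemma \ref{lem_tgpisom}, refined to account for the block structure imposed by the symplectic form $M_{2n}$. By Lemma \ref{lem_reductsubbundcorrsymp}, I choose cocycles $(\sigma_{i,j})_{i,j\in J}$ of $\Fr_\Sp(E,\beta)$ taking values in $P_I$. In the non-Lagrangian case ($1 \leq l \leq n-1$), these are block upper-triangular with three diagonal blocks of sizes $l,\, 2n-2l,\, l$:
\[
\sigma_{i,j} = \begin{pmatrix} \alpha_{i,j} & * & * \\ 0 & \beta_{i,j} & * \\ 0 & 0 & \gamma_{i,j} \end{pmatrix},
\]
where $(\alpha_{i,j})$ are cocycles of $F$, $(\beta_{i,j})$ are cocycles of $F^\perp/F$ and take values in $\Sp(2(n-l),\C)$, and the relation $\sigma_{i,j}^T M_{2n} \sigma_{i,j} = M_{2n}$ forces $\gamma_{i,j} = K_l (\alpha_{i,j}^{-1})^T K_l$. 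In particular, $K_l \gamma_{i,j} K_l = (\alpha_{i,j}^{-1})^T$ are cocycles of $F^*$, reflecting the symplectic isomorphism $E/F^\perp \cong F^*$. In the Lagrangian case ($l=n$), the middle block is absent.

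I then compose with $\Ad : P_I \to \GL(\Lie(\Sp(2n,\C))/\Lie(P_I))$ to obtain cocycles of $s^* V_{\Fr_\Sp(E,\beta)/P_I}$. Writing $X \in \Lie(\Sp(2n,\C))$ in blocks $(X_{pq})_{p,q=1,2,3}$ of sizes $l,\, 2n-2l,\, l$, the relation $X^T M_{2n} + M_{2n} X = 0$ shows that the quotient $\Lie(\Sp(2n,\C))/\Lie(P_I)$ is represented by the two strictly lower block positions: $X_{21}$ of size $(2n-2l) \times l$ is free, while $X_{31}$ of size $l \times l$ is constrained so that $K_l X_{31}$ is symmetric. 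A block-matrix computation parallel to Lemma \ref{lem_tgpisom} gives, modulo $\Lie(P_I)$, the adjoint action
\[
X_{21} \mapsto \beta_{i,j} X_{21} \alpha_{i,j}^{-1}, \qquad X_{31} \mapsto \gamma_{i,j} X_{31} \alpha_{i,j}^{-1}.
\]
Since $\{X_{21}=0\}$ is $P_I$-invariant in the quotient, this yields a short exact sequence of vector bundles
\[
0 \to \Sigma \to s^* V_{\Fr_\Sp(E,\beta)/P_I} \to F^* \otimes (F^\perp/F) \to 0,
\]
where the quotient is identified exactly as in Lemma \ref{lem_tgpisom}, and $\Sigma$ has cocycles $X_{31} \mapsto \gamma_{i,j} X_{31} \alpha_{i,j}^{-1}$ on the Hankel-symmetric subspace.

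To identify $\Sigma$, I substitute $S := K_l X_{31}$ (a genuinely symmetric $l \times l$ matrix) and use $\gamma_{i,j} = K_l (\alpha_{i,j}^{-1})^T K_l$ together with $K_l^2 = I_l$, transforming the cocycle into $S \mapsto (\alpha_{i,j}^{-1})^T S \alpha_{i,j}^{-1}$. This is exactly the cocycle of $\mathrm{Sym}^2(F^*)$, so $\Sigma \cong \mathrm{Sym}^2(F^*)$. Taking determinants in the exact sequence and invoking the standard representation-theoretic identity $\det(\mathrm{Sym}^2 W) \cong (\det W)^{\otimes(\rk W + 1)}$ for the rank-$l$ bundle $W = F^*$ yields (\ref{eq_tgpisomorth1}). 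In the Lagrangian case, $F^\perp = F$ eliminates the middle block, only the $X_{31}$-part of the quotient survives, and the same identification directly gives $s^* V_{\Fr_\Sp(E,\beta)/P_I} \cong \mathrm{Sym}^2(F^*)$, hence (\ref{eq_tgpisomorth2}).

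The main obstacle is verifying that the symplectic relations on $X$ force $K_l X_{31}$ to be symmetric (rather than $X_{31}$ itself being arbitrary or symmetric), and then correctly translating this Hankel symmetry into an honest symmetric bilinear form on $F$ via the twist by $K_l$ — this is exactly the subtlety introduced by using $M_{2n}$ instead of $W_{2n}$. Once this identification is made, the remaining computations are a block-matrix multiplication directly parallel to Lemma \ref{lem_tgpisom}.
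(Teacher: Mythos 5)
Your proposal follows essentially the same route as the paper: choose $P_I$-valued cocycles, compose with $\Ad:P_I\rightarrow\GL(\Lie(\Sp(2n,\C))/\Lie(P_I))$, and compute the block action on the two lower-triangular positions, with the quotient represented by a free block $X_{21}\in\Mat((2n-2l)\times l,\C)$ and a Hankel-symmetric block $X_{31}$. Your identification of the $X_{31}$-part as $\mathrm{Sym}^2(F^*)$ via $S=K_lX_{31}$, followed by $\det(\mathrm{Sym}^2 W)\cong(\det W)^{\otimes(l+1)}$, is in fact a cleaner justification of the exponent $l+1$ than the dimension-scaling argument used in the paper's proof, so that part is a welcome refinement.

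One step is stated incorrectly, though it does not damage the conclusion. The adjoint action is \emph{not} block-diagonal in $(X_{21},X_{31})$: a parabolic element with nonzero off-diagonal block $b$ has $(2,3)$-entry $-M_{2n-2l}(d^{-1})^T b^T(a^{-1})^TK_l$, and conjugation therefore sends $X_{31}$ into the $X_{21}$-slot; this is exactly the upper-triangular (not diagonal) matrix displayed in (\ref{eq_adsummary}). Consequently $\{X_{21}=0\}$ is \emph{not} $P_I$-invariant; the invariant subspace is $\{X_{31}=0\}$, and the exact sequence runs the other way:
\begin{equation*}
    0\rightarrow F^*\otimes(F^\perp/F)\rightarrow s^*V_{\Fr_\Sp(E,\beta)/P_I}\rightarrow \mathrm{Sym}^2(F^*)\rightarrow 0 .
\end{equation*}
Since you only take determinants, which multiply across any such filtration regardless of which term is the sub and which is the quotient, the isomorphisms (\ref{eq_tgpisomorth1}) and (\ref{eq_tgpisomorth2}) still follow once you swap the roles of sub and quotient; but as written, the claimed invariance of $\{X_{21}=0\}$ and the diagonal form of the action need this correction.
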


\begin{proof}
    \sloppy
    Due to Remark \ref{rem_adbundleexactseq}, given cocycles $(\sigma_{i,j})_{i \in J}$ of $s^*\Fr_\Sp(E)/P_I$, we have cocycles $(\tau_{i,j})_{i \in J}=(\Ad\circ\sigma_{i,j})_{i \in J}$ of $s^*V_{\Fr_\Sp(E,\beta)/P_I}$, where $\Ad:P_I\rightarrow\GL(\Lie(\Sp(2n,\C))/\Lie(P_I))$ is the adjoint representation.

    We want to describe this representation. Assuming $F\neq F^\perp$, and using notation from (\ref{eq_m2n}), $\Lie(\Sp(2n,\C))/\Lie(P_I)$ consists of classes of matrices:
    \begin{align*}
        N_{D,G} & =\begin{pmatrix}
                       * & *               & * \\
                       D & *               & * \\
                       G & K_lD^TM_{2n-2l} & * \\
                   \end{pmatrix},\quad \begin{matrix}
                                           D\in\Mat((2n-2l)\times l,\C),                                    \\
                                           G\in \mathbf{W}:=\{B\in\Mat(l\times l,\C)\ \vert\ K_lB=B^TK_l\}, \\
                                       \end{matrix}
    \end{align*}
    where $D$ and $G$ have no relations. The group $P_I$ consists of matrices of the form:
    \begin{equation*}
        M_{a,b,c,d} =\begin{pmatrix}
            a & b & c                                      \\
            0 & d & -M_{2n-2l}(d^{-1})^T(b^T)(a^{-1})^TK_l \\
            0 & 0 & K_l(a^{-1})^TK_l                       \\
        \end{pmatrix},\hspace{0.1cm}  \begin{matrix}
            a\in\GL(l,\C),\hspace{0.08cm} b\in\Mat(s\times(2n-2l),\C), \\
            c\in\Mat(l\times l,\C),\hspace{0.15cm} d\in\Sp(2n-2l,\C).  \\
        \end{matrix}
    \end{equation*}
    with relations between $a$, $b$, $c$, and $d$ (which are not relevant to the proof). Using this, we can get an explicit description of $\Ad:P_I\rightarrow\GL(\Lie(\Sp(2n,\C))/\Lie(P_I))$:
    \begin{align}
        \nonumber\Ad                & (M_{a,b,c,d})(N_{D,G})                                              \\*
        \label{eq_tgpisomorthstart} & =\begin{pmatrix}
                                           \ldots & \ldots & \ldots                                 \\
                                           0      & d      & -M_{2n-2l}(d^{-1})^T(b^T)(a^{-1})^TK_l \\
                                           0      & 0      & K_l(a^{-1})^TK_l                       \\
                                       \end{pmatrix}\begin{pmatrix}
                                                        * & *      & * \\
                                                        D & *      & * \\
                                                        G & \ldots & * \\
                                                    \end{pmatrix}\begin{pmatrix}
                                                                     a^{-1} & \ldots & \ldots \\
                                                                     0      & \ldots & \ldots \\
                                                                     0      & 0      & \ldots \\
                                                                 \end{pmatrix}           \\
        \nonumber                   & =\begin{pmatrix}
                                           *                                         & *      & * \\
                                           dD-M_{2n-2l}(d^{-1})^T(b^T)(a^{-1})^TK_lG & *      & * \\
                                           K_l(a^{-1})^TK_lG                         & \ldots & * \\
                                       \end{pmatrix}\begin{pmatrix}
                                                        a^{-1} & \ldots & \ldots \\
                                                        0      & \ldots & \ldots \\
                                                        0      & 0      & \ldots \\
                                                    \end{pmatrix}             \\
        \label{eq_tgpisomorthend}   & =\begin{pmatrix}
                                           *                                                     & *      & * \\
                                           dDa^{-1}-M_{2n-2l}(d^{-1})^T(b^T)(a^{-1})^TK_lGa^{-1} & *      & * \\
                                           K_l(a^{-1})^TK_lG a^{-1}                              & \ldots & * \\
                                       \end{pmatrix}
    \end{align}
    From this, we find the matrix representing $\Ad(M_{a,b,c,d})$ as an endomorphism on $\Mat((2n-2l)\times l,\C)\times\mathbf{W}$, evaluated at $(D,G)$:
    \begin{equation}\label{eq_adsummary}
        \begin{pmatrix}
            (a^{-1})^T\otimes d & (a^{-1})^T\otimes-M_{2n-2l}(d^{-1})^T(b^T)(a^{-1})^TK_l \\
            0                   & (a^{-1})^T\otimes K_l(a^{-1})^TK_l                      \\
        \end{pmatrix},
    \end{equation}

    The determinant of $\Ad(M_{a,b,c,d})$ is then the product of the determinants of the diagonal blocks of (\ref{eq_adsummary}), acting as endomorphisms on $\Mat((2n-2l)\times l,\C)$ and $\mathbf{W}$ respectively.
    The upper-left block of (\ref{eq_adsummary}) has the determinant $\det((a^{-1})^T\otimes d)$.
    The lower-right block of (\ref{eq_adsummary}), acting on $\Mat(l\times l,\C)$, has the determinant $\det((a^{-1})^T\otimes K_l(a^{-1})^TK_l)=\det((a^{-1})^T)^{\otimes 2l}$. When this block is restricted to acting on $\mathbf{W}$, a complex vector space of dimension $l(l+1)/2$, we have the determinant:
    \begin{equation*}
        \det\left((a^{-1})^T\right)^{\otimes 2l\frac{l(l+1)/2}{l^2}}=\det\left((a^{-1})^T\right)^{\otimes(l+1)}.
    \end{equation*}

    Applying these results to our cocycles, we find that $(\det\circ\tau_{i,j})_{i,j\in I}$ is the product of cocycles of $\det(F^*\otimes(F^\perp/F))$ and $\det(F^*)^{\otimes(l+1)}$, so the isomorphism in (\ref{eq_tgpisomorth1}) follows.

    For the case of $F= F^\perp$, the calculations are the same, apart from removing the middle rows and columns in (\ref{eq_tgpisomorthstart},\ref{eq_tgpisomorthend}), so the isomorphism in (\ref{eq_tgpisomorth2}) follows.
\end{proof}

We can finally prove that Ramanathan-(semi)-stability for symplectic vector bundles is equivalent to checking slope-(semi)-stability conditions for isotropic subbundles.

\begin{theorem}\label{thm_ramversusslopesymp}
    For a symplectic vector bundle $(E,\beta)$ of rank $2n$, the following are equivalent:
    \begin{enumerate}[label=(\roman*)]
        \item $\Fr_\Sp(E,\beta)$ is Ramanathan-(semi)-stable.

        \item For all isotropic subbundles $F\subsetneq E$ of $(E,\beta)$, we have $\mu(F)(\leq)\mu(E)=0$.
    \end{enumerate}
\end{theorem}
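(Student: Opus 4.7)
My plan is to combine Remark \ref{rem_stabsstprincbund} with Lemmas \ref{lem_reductsubbundcorrsymp} and \ref{lem_tgpisomsymp}, so that Ramanathan-(semi)-stability of $\Fr_\Sp(E,\beta)$ becomes a slope inequality on isotropic subbundles. First, I would note that $\mu(E)=0$: the symplectic form $\beta$ gives a bundle isomorphism $E\cong E^*$, so $\det(E)\cong \det(E)^{-1}$ forces $\deg(E)=0$. Next, by Remark \ref{rem_stabsstprincbund} it suffices to test reductions to the standard maximal parabolics $P_I\subsetneq \Sp(2n,\C)$, and Lemma \ref{lem_reductsubbundcorrsymp} identifies these with isotropic subbundles $F\subsetneq E$ of rank $l$ (with $l\leq n$, and $F=F^\perp$ precisely when $l=n$).

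The core of the proof is then to translate $\deg(s^*V_{\Fr_\Sp(E,\beta)/P_I})\,(\geq)\,0$ into a slope condition using Lemma \ref{lem_tgpisomsymp}. A preliminary step that I want to isolate is the identity $\deg(F^\perp)=\deg(F)$. This follows from the fact that $\beta$ induces a perfect pairing $F\otimes (E/F^\perp)\to \mathcal{O}_X$, whence $E/F^\perp\cong F^*$, so
\begin{equation*}
    \deg(F^\perp) \;=\; \deg(E)-\deg(E/F^\perp) \;=\; 0-(-\deg(F)) \;=\; \deg(F),
\end{equation*}
and therefore $\deg(F^\perp/F)=0$. Armed with this, in the case $F\neq F^\perp$ formula (\ref{eq_tgpisomorth1}) yields
\begin{equation*}
    \deg(s^*V_{\Fr_\Sp(E,\beta)/P_I}) \;=\; -(2n-2l)\deg(F) + l\deg(F^\perp/F) - (l+1)\deg(F) \;=\; -(2n-l+1)\deg(F),
\end{equation*}
while in the Lagrangian case $l=n$ formula (\ref{eq_tgpisomorth2}) gives $\deg(s^*V_{\Fr_\Sp(E,\beta)/P_I}) = -(n+1)\deg(F)$. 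In both cases the coefficient multiplying $\deg(F)$ is strictly negative (since $l\leq n$), so the inequality $\deg(s^*V_{\Fr_\Sp(E,\beta)/P_I})\,(\geq)\,0$ is equivalent to $\deg(F)\,(\leq)\,0$, i.e., to $\mu(F)\,(\leq)\,\mu(E)=0$.

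Virtually every step is prescribed by a lemma already in place, so I expect no genuine obstacle in the argument; the only subtlety worth flagging is the step $\deg(F^\perp)=\deg(F)$, which is what makes the two different formulas of Lemma \ref{lem_tgpisomsymp} produce compatible slope conditions, and the bookkeeping needed to check that the coefficient of $\deg(F)$ has the right sign in both the isotropic and Lagrangian cases. Once these are handled, combining with Remark \ref{rem_stabsstprincbund} gives the desired equivalence.
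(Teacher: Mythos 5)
Your proposal is correct and follows essentially the same route as the paper: reduce to standard maximal parabolics, identify them with isotropic subbundles via Lemma \ref{lem_reductsubbundcorrsymp}, and compute $\deg(s^*V_{\Fr_\Sp(E,\beta)/P_I})=-(2n-l+1)\deg(F)$ from Lemma \ref{lem_tgpisomsymp}. The only cosmetic differences are that you justify $\deg(F^\perp/F)=0$ via the pairing $E/F^\perp\cong F^*$ rather than the self-duality of $F^\perp/F$ as a symplectic bundle, and you treat the Lagrangian case by the second formula separately, whereas the paper absorbs it into the single computation; both are fine.
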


\begin{proof}
    Let $F\subsetneq E$ be an isotropic subbundle of rank $l$. Using that $F^\perp/F$ is self-dual as a symplectic vector bundle, and is thus of degree $0$, the isomorphisms in Lemma \ref{lem_tgpisomsymp} imply:
    \begin{align*}
        \deg(s^*V_{\Fr_\Sp(E,\beta)/P_I})
         & =-\deg(F)(2n-2l) + \deg(F^\perp/F)l - \deg(F)(l+1) \\
         & =-\deg(F)(2n-l+1) +0l                              \\
         & =-\deg(F)(2n-l+1)
    \end{align*}
    Since $l \leq n$, we have $2n-l+1\geq n+1\geq 1$, so it follows that $\deg(s^*V_{\Fr_\Sp(E,\beta)/P_I})(\geq)0$ is equivalent to $\mu(F)(\leq)\mu(E)=0$.
\end{proof}

In the semistable case, the implication \textit{(i)} to \textit{(ii)} also follows from applying Theorem \ref{thm_instflag}, then Theorem \ref{thm_ramversusslope}, allowing us to also drop the isotropic condition.

\begin{lemma}\label{lem_ramversusslopesymp2}
    For a symplectic vector bundle $(E,\beta)$ of rank $2n$, the following are equivalent:
    \begin{enumerate}[label=(\roman*)]
        \item $\Fr_\Sp(E,\beta)$ is Ramanathan-semistable.

        \item $E$ is slope-semistable.
    \end{enumerate}
\end{lemma}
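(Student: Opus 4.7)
The plan is to leverage the two previous results to handle the two implications separately, using a change of structure group in one direction and the isotropic characterization from Theorem~\ref{thm_ramversusslopesymp} in the other. A preliminary observation is that the symplectic form $\beta$ provides an isomorphism $E \cong E^*$, so $\deg(E) = -\deg(E)$, hence $\mu(E) = 0$. This is what lets the two theorems talk to each other.

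For the implication \textit{(i)} to \textit{(ii)}, the idea is to extend the structure group along the inclusion $\varphi : \Sp(2n,\C) \hookrightarrow \GL(2n,\C)$, which produces the ordinary frame bundle $\Fr(E) \cong \Fr_\Sp(E,\beta)(\GL(2n,\C))$. Since $\Sp(2n,\C)$ is semisimple, its radical is trivial and hence trivially maps into $\Rad(\GL(2n,\C))$, so the hypothesis of Theorem~\ref{thm_instflag} is satisfied. Applying that theorem yields Ramanathan-semistability of $\Fr(E)$, which by Theorem~\ref{thm_ramversusslope} is equivalent to slope-semistability of $E$.

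For the reverse implication \textit{(ii)} to \textit{(i)}, the plan is to invoke Theorem~\ref{thm_ramversusslopesymp}. If $E$ is slope-semistable, then \emph{every} subbundle $F \subsetneq E$ satisfies $\mu(F) \leq \mu(E) = 0$; this inequality holds in particular for isotropic subbundles, so condition \textit{(ii)} of Theorem~\ref{thm_ramversusslopesymp} is met, and Ramanathan-semistability of $\Fr_\Sp(E,\beta)$ follows.

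There is no real obstacle here beyond bookkeeping: the only subtle point is justifying the use of Theorem~\ref{thm_instflag}, which is immediate because the radical condition is vacuous for semisimple groups. It is worth remarking in the text that this clean argument is precisely what fails at the level of stability (not just semistability), since Theorem~\ref{thm_instflag} is a semistability statement only; this explains why the analogous statement for stability does not drop the isotropic condition in Theorem~\ref{thm_ramversusslopesymp}.
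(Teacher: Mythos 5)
Your proposal is correct and follows the paper's own argument essentially verbatim: Theorem~\ref{thm_instflag} (with the trivial radical of $\Sp(2n,\C)$) plus Theorem~\ref{thm_ramversusslope} for the forward direction, and Theorem~\ref{thm_ramversusslopesymp} for the reverse. The added remark that $\beta$ forces $\mu(E)=0$ and the observation about why the argument does not extend to stability are accurate but not points of divergence.
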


\begin{proof}
    \sloppy
    For \textit{(i)} to \textit{(ii)}, the inclusion $\Sp(2n,\C)\hookrightarrow\GL(2n,\C)$ maps $\Rad(\Sp(2n,\C))=0$ into $\Rad(\GL(2n,\C))$. This extends $\Fr_\Sp(E,\beta)$ as a principal $\Sp(2n,\C)$-bundle to $\Fr(E)$ as a principal $\GL(2n,\C)$-bundle, so $\Fr(E)$ is Ramanathan-semistable due to Theorem \ref{thm_instflag}. Then $E$ is slope-semistable following Theorem \ref{thm_ramversusslope}.

    The implication \textit{(ii)} to \textit{(i)} follows from Theorem \ref{thm_ramversusslopesymp}.
\end{proof}

\subsection{Special-orthogonal vector bundles}

Special-orthogonal vector bundles are defined similarly to symplectic vector bundles, where we replace the symplectic form with a nondegenerate symmetric form.

\begin{definition}\label{def_orthbund}
    Let $E$ be a holomorphic vector bundle of rank $r$ on a compact Riemann surface $X$, then:
    \begin{enumerate}[label=(\alph*)]
        \item An \textit{orthogonal vector bundle} $(E,\beta)$ is a vector bundle $E$ admitting a nondegenerate symmetric form $\beta: E \oplus  E \to X\times\C$.

        \item A \textit{special-orthogonal vector bundle} $(E,\beta,\tau)$ is an orthogonal vector bundle $(E,\beta)$ that is also special $(E,\tau)$, in the sense of Definition \ref{def_specialvecbun}.
    \end{enumerate}
\end{definition}

Much like for symplectic vector bundles, Ramanathan-(semi)-stability corresponds to checking slope conditions on isotropic subbundles, as seen in \cite[$\S$4]{ramanan_orthogonalandspinbundlesoverhyperellipticcurves}. To see this, we need to understand the root system of the \textit{special-orthogonal group} $\SO(r,\C)$, which we define with respect to the nondegenerate symmetric form $\beta(x,y):=\sum_{i=1}^r x_{r+1-i}y_i$:
\begin{align}\label{eq_orthform}
    \SO(r,\C):= \left\{A\in\SL(r,\C)\ \middle\vert\ A^TK_{r}A=K_{r}\right\}, &  & K_r :=\begin{pmatrix}
                                                                                            0 & 0                                  & 1 \\
                                                                                            0 & \rotatebox[origin=c]{35}{$\cdots$} & 0 \\
                                                                                            1 & 0                                  & 0
                                                                                        \end{pmatrix}\in\GL(r,\C).
\end{align}

Since $\SO(1,\C)$ is trivial and $\SO(2,\C)$ is abelian, we impose $r\geq3$ unless otherwise stated.
We intersect the standard Cartan and Borel subgroups of $\GL(r,\C)$ to obtain Cartan and Borel subgroups of $\SO(r,\C)$. The root system of $\SO(r,\C)$ depends on whether $r$ is even or odd, with the simple roots induced by our chosen Borel subgroup:
\begin{align*}
    \triangle & =\left\{\alpha_{l,l+1} \in \Lie(T)^*\ \middle\vert\ 1 \leq l \leq n-1\right\} \sqcup\left\{\alpha_n\in \Lie(T)^*\right\},           &  & r=2n+1\geq3, \\
    \triangle & =\left\{\alpha_{l,l+1}\in\Lie(T)^*\ \middle\vert\ 1 \leq l \leq n-1\right\} \sqcup\left\{\alpha_{n-1}+\alpha_n\in\Lie(T)^*\right\}, &  & r=2n\geq4,
\end{align*}
using notation from (\ref{eq_simpleroots},\ref{eq_simplerootssp}).
This difference in parity is significant, as it affects how standard parabolic subgroups appear. In the odd case, standard parabolic subgroups are all subgroups of upper-right block matrices. In the even case, \textit{not all} standard parabolic subgroups are upper-right block matrices, as seen concretely in the following example.

\begin{example}\label{ex_so4}
    The special-orthogonal group $\SO(4,\C)$ has the simple roots $\triangle=\{\alpha_{1,2},\alpha_1+\alpha_2\}$. The nontrivial (i.e., non-Borel and proper) standard parabolic subgroups are:
    \begin{align*}
        P_I & =\left\{\begin{pmatrix}
                          * & * & * & * \\
                          0 & * & 0 & * \\
                          * & * & * & * \\
                          0 & * & 0 & * \\
                      \end{pmatrix}\in\SO(4,\C)\right\}, & I & = \{\alpha_{1,2}\},      \\
        P_I & =\left\{\begin{pmatrix}
                          * & * & * & * \\
                          * & * & * & * \\
                          0 & 0 & * & * \\
                          0 & 0 & * & * \\
                      \end{pmatrix}\in\SO(4,\C)\right\}, & I & = \{\alpha_1+\alpha_2\}.
    \end{align*}
    Whilst only one of them consists of upper-right block matrices, the groups are isomorphic by an outer orthogonal automorphism: a conjugation permuting the middle rows and columns. Note that both groups are stabilizers of isotropic flags in $\C^4$ with one nontrivial subspace of dimension $2$.
\end{example}

Using the fact that parabolic subgroups always arise as stabilizers of flags, as seen in \cite[Theorem 3.9 (i)]{conrad_standardparabolicsubgroups}, we can  use \cite[Corollary 3.4]{conrad_standardparabolicsubgroups} to at least say that in the even case, parabolic subgroups of $\SO(r,\C)$ are conjugate, via an element $g \in \O(r,\C)$, to upper-right block matrix subgroups of $\SO(r,\C)$. Now we can proceed as in the symplectic case, being careful to respect odd and even cases. We define special-orthogonal frame bundles, similarly to Definitions \ref{def_specialframebun} and \ref{def_specialsympframebun}.

\begin{definition}\label{def_specialorthframebun}
    For a special-orthogonal vector bundle $(E,\beta,\tau)$, the \textit{special-orthogonal frame bundle} $\Fr_\SO(E,\beta,\tau)$ is a principal $\SO(r,\C)$-bundle, whose fiber at $x\in X$ is given by:
    \begin{equation*}
        \SOrth_{\C}^{\beta,\tau}(\C^{r},E_x),
    \end{equation*}
    consisting of isomorphisms $f:\C^{r}\rightarrow E_x$ compatible with $\beta$ on $E_x$, and the form on $\C^{r}$ induced by $K_{r}$ from (\ref{eq_orthform}), such that $\det(f)=\tau(x)$.
\end{definition}

We obtain a special-orthogonal variant of Lemmas \ref{lem_reductsubbundcorr}, \ref{lem_reductsubbundcorrsl}, and \ref{lem_reductsubbundcorrsymp}. Let $(E,\beta,\tau)$ be a special-orthogonal vector bundle of rank $r\geq3$.

\begin{lemma}\label{lem_reductsubbundcorrorth}
    The following are in correspondence:
    \begin{enumerate}[label=(\roman*)]
        \item Isotropic subbundles $F\subsetneq E$ of $(E,\beta,\tau)$ of rank $l$.

        \item Reductions $s^*\Fr_\SO(E,\beta,\tau)$ of $\Fr_\SO(E,\beta,\tau)$ to $P_I$, where in the odd $r=2n+1$ case:
              \begin{align}
                  \nonumber  I              & =\{\alpha_{l,l+1}\}\subseteq\triangle,          & l      & \neq n,     \\
                  \nonumber  I              & =\{\alpha_n\}\subseteq\triangle,                & l      & = n,        \\
                  \intertext{and where in the even $r=2n$ case:}
                  \nonumber  I              & =\{\alpha_{l,l+1}\}\subseteq\triangle,          & l      & \neq n-1,n, \\
                  \label{eq_orthevenissue}I & =\{\alpha_{n-1,n},\alpha_n+\alpha_{n+1}\}
                  \subseteq\triangle,       & l                                               & = n-1,               \\
                  \nonumber I               & =\{\alpha_{n-1}+\alpha_{n}\}\subseteq\triangle, & l      & = n.
              \end{align}
    \end{enumerate}
\end{lemma}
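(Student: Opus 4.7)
The plan is to mimic the two-directional construction from Lemma \ref{lem_reductsubbundcorrsymp}, with the key algebraic input being the identification of $P_I$, in each case, as the stabilizer in $\SO(r,\C)$ of the standard isotropic subspace $\C^l \times \{0\}^{r-l} \subseteq \C^r$ (equipped with the form induced by $K_r$). Once this identification is in place, the bijection is built in the usual way. For \textit{(i)} to \textit{(ii)}, given an isotropic subbundle $F \subsetneq E$ of rank $l$, define $s : X \to \Fr_\SO(E, \beta, \tau)/P_I$ by sending $x$ to the class of those special-orthogonal isomorphisms $\C^r \to E_x$ that restrict to isomorphisms $\C^l \times \{0\}^{r-l} \to F_x$ and $\C^{r-l} \times \{0\}^l \to F_x^\perp$. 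For \textit{(ii)} to \textit{(i)}, set $F_x := s(x)(\C^l \times \{0\}^{r-l})$; since $P_I$ stabilises this subspace, $F_x$ is a well-defined isotropic subspace of $E_x$, and holomorphicity of $s$ glues the fibres into a subbundle.

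The substance of the lemma is therefore the root-theoretic case analysis identifying $P_I$ with the stabiliser of the standard isotropic $l$-subspace. For $r = 2n+1$ odd, every such stabiliser is a standard maximal parabolic; the only choice is whether $l < n$ (remove the type-A root $\alpha_{l,l+1}$) or $l = n$ (remove the short root $\alpha_n$, corresponding to the block ``split down the middle'' as in the symplectic discussion after Example \ref{ex_sp4}). For $r = 2n$ even with $l \leq n-2$, the stabiliser is again the standard maximal parabolic $P_{\{\alpha_{l,l+1}\}}$. For $l = n$, one uses that the two families of maximal isotropics of $(\C^{2n}, K_{2n})$ are interchanged by elements of $\O(2n,\C) \setminus \SO(2n,\C)$ but preserved by $\SO(2n,\C)$, so the stabiliser of a single maximal isotropic is the maximal parabolic $P_{\{\alpha_{n-1} + \alpha_n\}}$, matching Example \ref{ex_so4}.

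The main obstacle is the exceptional case $r = 2n$, $l = n-1$, where $P_I$ is \emph{not} a maximal parabolic. Here, given an $(n-1)$-dimensional isotropic $V \subset \C^{2n}$, the quotient $V^\perp/V$ is $2$-dimensional and carries a non-degenerate symmetric form; its two isotropic lines lift uniquely to the two $n$-dimensional maximal isotropics $W_1, W_2$ containing $V$, one from each family. The key point, as above, is that interchanging $W_1$ and $W_2$ lies outside $\SO(2n,\C)$, so the $\SO(2n,\C)$-stabiliser of $V$ fixes each $W_i$ individually and therefore equals $P_{W_1} \cap P_{W_2}$. This intersection is exactly the standard parabolic obtained by removing both of the rightmost simple roots, yielding the set $I$ in \eqref{eq_orthevenissue}. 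Once this non-maximality is established, the bijection is set up as in the other cases, with the additional observation that the flag $(V, W_1, W_2)$ is recoverable from $V$ alone.
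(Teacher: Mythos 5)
Your proof is correct and takes essentially the same approach the paper indicates: mimic the symplectic Lemma \ref{lem_reductsubbundcorrsymp} and carry out the root-theoretic case analysis, with the $l = n-1$, $r = 2n$ case requiring two simple roots. The paper merely cites Example \ref{ex_so4} for that exceptional case, whereas you make explicit the underlying reason — the two maximal isotropics $W_1, W_2$ containing a fixed $(n-1)$-dimensional isotropic $V$ lie in opposite $\SO(2n,\C)$-families, so the $\SO(2n,\C)$-stabiliser of $V$ cannot swap them and must equal the corank-two parabolic $P_{W_1}\cap P_{W_2}$.
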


The proof is analogous to that of Lemma \ref{lem_reductsubbundcorrsymp}, where the distinction between odd and even cases is due to their differing root systems. Note that two simple roots are needed in (\ref{eq_orthevenissue}) when $l =n - 1$, since for example for $r=4$, a reduction to an isotropic subbundle of rank $l=1$ corresponds to a reduction to the Borel subgroup $B$ of $\SO(4,\C)$, because of Example \ref{ex_so4}.

Lemma \ref{lem_reductsubbundcorrorth} leads to a special-orthogonal version of Lemmas \ref{lem_tgpisom} and \ref{lem_tgpisomsymp}.

\begin{lemma}\label{lem_tgpisomorth}
    For an isotropic subbundle $F\subsetneq E$ of $(E,\beta,\tau)$ of rank $l$, and the corresponding reduction $s:X\rightarrow\Fr_\SO(E)/P_I$ from Lemma \ref{lem_reductsubbundcorrorth}, we have:
    \begin{align*}
        \det(s^*V_{\Fr_\SO(E,\beta,\tau)/P_I}) & \cong\det(F^*\otimes (F^\perp/F))\otimes\left(\det(F^*)^{\otimes(l-1)}\right), & F & \neq F^\perp, \\
        \det(s^*V_{\Fr_\SO(E,\beta,\tau)/P_I}) & \cong\det(F^*)^{\otimes(l-1)},                                                 & F & =F^\perp.
    \end{align*}
\end{lemma}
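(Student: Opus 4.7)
The plan is to mimic the proof of Lemma \ref{lem_tgpisomsymp} almost verbatim, adjusting only for the differences between the symplectic and orthogonal Lie algebras. Given cocycles $(\sigma_{i,j})_{i,j\in J}$ of $s^*\Fr_\SO(E,\beta,\tau)$ landing in $P_I$, the composite $(\tau_{i,j})_{i,j\in J}=(\Ad\circ\sigma_{i,j})_{i,j\in J}$ with the adjoint representation $\Ad:P_I\rightarrow\GL(\Lie(\SO(r,\C))/\Lie(P_I))$ yields cocycles of $s^*V_{\Fr_\SO(E,\beta,\tau)/P_I}$, reducing the lemma to an explicit description of this representation.

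Assuming first that $P_I$ consists of upper-right block matrices (automatic in the odd case, and in the even case whenever $l\neq n-1$), I would represent $\Lie(\SO(r,\C))/\Lie(P_I)$ by matrices $N_{D,G}$ exactly as in Lemma \ref{lem_tgpisomsymp}, with generic block $D\in\Mat((r-2l)\times l,\C)$ and lower-left block $G$ in a constrained subspace $\mathbf{W}\subseteq\Mat(l\times l,\C)$. The crucial difference is that $K_r$ now represents a \emph{symmetric} form, so the Lie algebra relation $X^TK_r+K_rX=0$ forces $K_lG$ to be antisymmetric rather than symmetric; thus $\mathbf{W}=\{B\in\Mat(l\times l,\C):K_lB=-B^TK_l\}$ has complex dimension $l(l-1)/2$ instead of $l(l+1)/2$. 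The matrix multiplication chain from equations (\ref{eq_tgpisomorthstart})--(\ref{eq_tgpisomorthend}) then carries over word for word, producing the same block-upper-triangular endomorphism of $\Mat((r-2l)\times l,\C)\times\mathbf{W}$ with diagonal blocks $(a^{-1})^T\otimes d$ and $(a^{-1})^T\otimes K_l(a^{-1})^TK_l$.

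Taking determinants, the upper-left block contributes cocycles of $\det(F^*\otimes(F^\perp/F))$ just as before. The lower-right block has determinant $\det((a^{-1})^T)^{\otimes 2l}$ on all of $\Mat(l\times l,\C)$, so by the same scaling argument used in Lemma \ref{lem_tgpisomsymp} its determinant on $\mathbf{W}$ is
\[
\det\!\left((a^{-1})^T\right)^{\otimes 2l\cdot\frac{l(l-1)/2}{l^2}}=\det\!\left((a^{-1})^T\right)^{\otimes(l-1)},
\]
giving the first stated isomorphism for $F\neq F^\perp$. The Lagrangian case $F=F^\perp$ is handled, as in the symplectic proof, by deleting the middle rows and columns so that only the $\mathbf{W}$ contribution survives.

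The main obstacle is the even case $r=2n$, $l=n-1$ from (\ref{eq_orthevenissue}), where $P_I$ is cut out by two simple roots and, as Example \ref{ex_so4} shows, need not consist of upper-right block matrices. Here I would invoke the remark following Example \ref{ex_so4}: there exists $g\in\O(r,\C)$ conjugating $P_I$ to an upper-right block matrix subgroup $P_I'\subseteq\SO(r,\C)$ stabilizing an isotropic flag of the same type. Conjugating the reduction by $g$ replaces the cocycles of $s^*\xi$ by conjugate ones (compare the argument of Remark \ref{rem_stabsstprincbund}), so the corresponding adjoint bundles have isomorphic determinant line bundles. The computation above then applies to $P_I'$ and transfers back to $P_I$, concluding the proof in all cases.
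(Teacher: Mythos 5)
Your proof takes exactly the route the paper intends: it reruns the computation of Lemma \ref{lem_tgpisomsymp} with the form $K_r$ replacing $M_{2n}$, identifies the constrained block $\mathbf{W}=\{B:K_lB=-B^TK_l\}$ of dimension $l(l-1)/2$ as the source of the exponent $(l-1)$, and handles the Lagrangian case by dropping the middle blocks. The paper's own proof is only the one-sentence remark following the lemma, which says precisely this; your version supplies the details and is correct.

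The one inaccuracy is your handling of the case $r=2n$, $l=n-1$. You claim, citing Example \ref{ex_so4}, that there $P_I$ "need not consist of upper-right block matrices" and therefore invoke a conjugation by some $g\in\O(r,\C)$. This misreads the example. The parabolic $P_I$ associated by Lemma \ref{lem_reductsubbundcorrorth} to a rank-$(n-1)$ isotropic subbundle is $P_I$ with $I=\{\alpha_{n-1,n},\alpha_{n-1}+\alpha_n\}$; in $\SO(4,\C)$ this is $P_\triangle=B$, which \emph{is} block upper triangular with block sizes $(n-1,2,n-1)$ — indeed the Borel of $\SO(2n,\C)$ is obtained by intersecting with the upper triangular Borel of $\GL(2n,\C)$, and the orthogonality relation $A^TK_{2n}A=K_{2n}$ then forces the middle $2\times 2$ block to lie in the torus $\SO(2,\C)$. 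The parabolic from Example \ref{ex_so4} that fails to be block upper triangular, $P_{\{\alpha_{1,2}\}}$, is the stabilizer of a Lagrangian in the \emph{other} component of the orthogonal Grassmannian and never arises in Lemma \ref{lem_reductsubbundcorrorth}. Consequently the $(l,r-2l,l)$ block decomposition, and hence the parametrization of $\Lie(\SO(r,\C))/\Lie(P_I)$ by pairs $(D,G)$, applies uniformly to all $l\leq n$, and the determinant computation goes through directly (in particular $\det((a^{-1})^T\otimes d)=\det(a^{-1})^{r-2l}$ since $d\in\SO(r-2l,\C)$ has $\det(d)=1$ even when $r-2l=2$). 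Your conjugation workaround is logically harmless — the conjugation needed is trivial — but it is a detour you do not need.
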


Again, the proof is analogous to that of Lemma \ref{lem_tgpisomsymp}. Note the slight difference to Lemma \ref{lem_tgpisomsymp} in that $\det(F^*)$ is tensored with itself $(l-1)$-times, not $(l+1)$-times. This is because the complex vector space $\mathbf{W}$ in the proof of Lemma \ref{lem_tgpisomsymp} is replaced with $\Lie(\SO(l,\C))$, which is of dimension $l(l-1)/2$, not $l(l+1)/2$. Finally, we have a correspondence between Ramanathan-(semi)-stability for special-orthogonal vector bundles, and slope conditions on isotropic subbundles, just like in Theorem \ref{thm_ramversusslopesymp}.

\begin{theorem}\label{thm_ramversusslopeorth}
    For a special-orthogonal vector bundle $(E,\beta,\tau)$ of rank $r\geq 3$, the following are equivalent:
    \begin{enumerate}[label=(\roman*)]
        \item $\Fr_\SO(E,\beta,\tau)$ is Ramanathan-(semi)-stable.

        \item For all isotropic subbundles $F\subsetneq E$ of $(E,\beta)$, we have $\mu(F)(\leq)\mu(E)=0$.

        \item The slope condition $\mu(F)(\leq)\mu(E)=0$ is fulfilled for all:
              \begin{enumerate}[label=(iii.\roman*)]
                  \item isotropic subbundles $F\subsetneq E$ of $(E,\beta,\tau)$, when $r=2n+1$ is odd.
                  \item isotropic subbundles $F\subsetneq E$ of $(E,\beta,\tau)$ of rank $l\neq n-1$, when $r=2n$ is even.
              \end{enumerate}
    \end{enumerate}
\end{theorem}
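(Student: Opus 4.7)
The plan is to follow the template of Theorem \ref{thm_ramversusslopesymp}, adding one extra argument in the even rank case to handle isotropic subbundles of rank $l=n-1$, whose stabilizer in $\SO(2n,\C)$ is not maximal.

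\textbf{Proof of (i) $\Leftrightarrow$ (iii).} By Remark \ref{rem_stabsstprincbund}, it suffices to check $\deg(s^*V_{\Fr_\SO(E,\beta,\tau)/P})\,(\geq)\,0$ for reductions to standard maximal parabolic subgroups $P\subseteq\SO(r,\C)$. By Lemma \ref{lem_reductsubbundcorrorth}, such reductions are in bijection with exactly the isotropic subbundles appearing in (iii): every isotropic subbundle in the odd case, and every isotropic subbundle of rank $l\neq n-1$ in the even case (the excluded rank $l=n-1$ being indexed by a non-maximal parabolic requiring two simple roots). Combining Lemma \ref{lem_tgpisomorth} with the fact that $F^\perp/F$ inherits a nondegenerate symmetric form and is therefore self-dual of degree zero, a direct degree computation mirroring that of Theorem \ref{thm_ramversusslopesymp} yields, in both cases $F\neq F^\perp$ and $F=F^\perp$,
\[
\deg(s^*V_{\Fr_\SO(E,\beta,\tau)/P_I}) \;=\; -\deg(F)(r-2l)+l\deg(F^\perp/F)-(l-1)\deg(F) \;=\; -\deg(F)(r-l-1).
\]
A short case-check shows $r-l-1\geq 1$ for every $l$ allowed in (iii), hence this quantity is $(\geq)\,0$ if and only if $\mu(F)\,(\leq)\,0$.

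\textbf{Proof of (ii) $\Leftrightarrow$ (iii).} The direction (ii) $\Rightarrow$ (iii) is tautological. For (iii) $\Rightarrow$ (ii), the only case requiring attention is an isotropic subbundle $F$ of rank $l=n-1$ in the even case $r=2n$ (with $n\geq 2$). The non-maximal parabolic $P_I\subseteq\SO(2n,\C)$ stabilizing $F$ has Levi factor $\GL(n-1,\C)\times\SO(2,\C)$ and sits inside exactly two standard maximal parabolic subgroups $P_{I_1}, P_{I_2}$. Extending the reduction of $\Fr_\SO(E,\beta,\tau)$ to $P_I$ along the two inclusions $P_I\hookrightarrow P_{I_j}$ produces, via Lemma \ref{lem_reductsubbundcorrorth}, two Lagrangian subbundles $F_1,F_2\subseteq E$, each containing $F$. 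Projection of the Levi factor to its $\SO(2,\C)$ component exhibits $F^\perp/F$ as an $\SO(2,\C)$-associated rank-$2$ bundle, which on a compact Riemann surface splits as $L\oplus L^{-1}$ for some line bundle $L$; the preimages in $F^\perp$ of these two isotropic line subbundles are precisely $F_1$ and $F_2$, with $F_1/F\cong L$ and $F_2/F\cong L^{-1}$. Applying (iii) to each $F_j$, which has rank $n\neq n-1$, gives $\deg(F_j)\,(\leq)\,0$, so
\[
2\deg(F) \;=\; \deg(F_1) + \deg(F_2) - \deg(L) - \deg(L^{-1}) \;=\; \deg(F_1) + \deg(F_2) \;(\leq)\; 0,
\]
and $\mu(F)=\deg(F)/(n-1)\,(\leq)\,0$ follows.

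\textbf{Main obstacle.} The only substantive step beyond the symplectic template is the production of the two Lagrangians $F_1, F_2$ containing a rank-$(n-1)$ isotropic subbundle in the even case. This relies on the Levi factor of the non-maximal stabilizer $P_I$ containing $\SO(2,\C)$, together with the splitting of $\SO(2,\C)$-bundles on $X$ as $L\oplus L^{-1}$. Once this is set up, the remainder is a degree bookkeeping calculation parallel to the symplectic case.
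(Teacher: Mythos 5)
Your proposal is correct and follows essentially the same route as the paper: the equivalence of \textit{(i)} and \textit{(iii)} via Lemmas \ref{lem_reductsubbundcorrorth} and \ref{lem_tgpisomorth} and the computation $\deg(s^*V)=-\deg(F)(r-l-1)$, and the rank-$(n-1)$ case in even rank handled by passing to Lagrangian subbundles containing $F$ through the non-maximal parabolic (the $\SO(2,\C)$ quirk of $D_n$). The only cosmetic difference is at the end: the paper produces one Lagrangian $F'\supsetneq F$ and arranges $\deg(F'/F)\geq 0$ without loss of generality by the orthogonal swap exchanging $\beta_{i,j}$ with $\beta_{i,j}^{-1}$, whereas you use both Lagrangians $F_1/F\cong L$, $F_2/F\cong L^{-1}$ and sum the two inequalities, which is the same argument with the WLOG replaced by averaging.
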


Unlike Theorem \ref{thm_ramversusslopesymp}, there is a third equivalent statement that only tests some isotropic subbundles, since not all isotropic subbundles correspond to standard maximal parabolic reductions, as seen in (\ref{eq_orthevenissue}) from Lemma \ref{lem_reductsubbundcorrorth}.

\begin{proof}
    The proof of equivalence between \textit{(i)} and \textit{(iii)} is analogous to the proof of Theorem \ref{thm_ramversusslopesymp}. Let $F\subsetneq E$ be an isotropic subbundle of rank $l$. Using that $F^\perp/F$ is self-dual as a special-orthogonal vector bundle, and is thus of degree $0$, the isomorphisms in Lemma \ref{lem_tgpisomorth} imply:
    \begin{align*}
        \deg(s^*V_{\Fr_\SO(E,\beta,\tau)/P_I})
         & =-\deg(F)(r-2l) + \deg(F^\perp/F)l - \deg(F)(l-1) \\
         & =-\deg(F)(r-l-1) +0l                              \\
         & =-\deg(F)(r-l-1)
    \end{align*}
    Since $l \leq r/2$ and $r\geq 3$, we have $r-l-1\geq 1$, so it follows that $\deg(s^*V_{\Fr_\Sp(E,\beta)/P_I})(\geq)0$ is equivalent to $\mu(F)(\leq)\mu(E)=0$.

    The implication \textit{(ii)} to \textit{(iii)} is clear, so it remains to show that \textit{(iii)} implies \textit{(ii)}. In the odd $r=2n+1$ case, there is nothing to show, so we focus on $r=2n$. We only have to show that for an isotropic subbundle $F\subsetneq E$ of rank $n-1$, we have $\mu(F)(\leq)\mu(E)=0$. Due to (\ref{eq_orthevenissue}) from Lemma \ref{lem_reductsubbundcorrorth}, $F$ corresponds to a reduction $s:X\rightarrow E/P_I$, where $I=\{\alpha_{n-1,n},\alpha_n+\alpha_{n+1}\}$. There exist cocycles $(\sigma_{i,j})_{i,j\in J}$ of $E$ that map into $P_I$, compatible with $F$ in the sense of Remark \ref{rem_subbundcocy}:
    \begin{equation*}
        \sigma_{i,j}:U_i\cap U_j\rightarrow P_I, \quad x\mapsto
        \begin{pmatrix}
            \alpha_{i,j}(x) & *              & *                   & *                        \\
            0               & \beta_{i,j}(x) & 0                   & *                        \\
            0               & 0              & \beta_{i,j}(x)^{-1} & *                        \\
            0               & 0              & 0                   & (\alpha_{i,j}(x)^{-1})^T \\
        \end{pmatrix},\quad i,j\in J,
    \end{equation*}
    where $(\alpha_{i,j})_{i,j\in J}$ are cocycles of $F$ and $\beta_{i,j}(x)\in \C^\times$. Since the cocycles $(\sigma_{i,j})_{i,j\in J}$ map into $P_{\{\alpha_n+\alpha_{n+1}\}}$, they define a reduction of $\Fr_{\SO}(E,\beta,\tau)$ to $P_{\{\alpha_n+\alpha_{n+1}\}}$, corresponding to a Lagrangian subbundle $F'$ of $E$ strictly containing $F$. At every fiber $E_x$, $x\in X$, the Lagrangian subspace $F'_x$ strictly containing $F_x$ exists following \cite[Theorem 3.9 (ii)]{conrad_standardparabolicsubgroups}. This is a quirk of the specific reductive group structure of $\SO(2n,\C)$, where parabolic subgroups are not in bijection with isotropic flags of $\C^{2n}$. Much like in Example \ref{ex_so4}, we can conjugate the cocycles $(\sigma_{i,j})_{i,j\in J}$ by an orthogonal matrix swapping the middle rows and columns, allowing us to swap $(\beta_{i,j})_{i,j\in J}$ with $(\beta_{i,j}^{-1})_{i,j\in J}$. In this way, we can assume $\deg(F'/F)\geq 0$ without loss of generality. Since $\deg(F)+\deg(F'/F)=\deg(F')(\leq)0$, and since $\deg(F'/F)\geq 0$, we have $\deg(F)(\leq)0$, and so $\mu(F)(\leq)\mu(E)=0$.
\end{proof}

In the semistable case, the implications \textit{(i)} to \textit{(ii)}, and \textit{(i)} to \textit{(iii)}, follow from Theorem \ref{thm_instflag}, which allows us to drop the isotropic condition, similarly to Lemma \ref{lem_ramversusslopesymp2}.

\begin{lemma}\label{lem_ramversusslopeorth2}
    For a special-orthogonal vector bundle $(E,\beta,\tau)$ of rank $r\geq 3$, the following are equivalent:
    \begin{enumerate}[label=(\roman*)]
        \item $\Fr_\SO(E,\beta,\tau)$ is Ramanathan-semistable.

        \item $E$ is slope-semistable.
    \end{enumerate}
\end{lemma}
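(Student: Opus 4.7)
The plan is to adapt the proof of Lemma \ref{lem_ramversusslopesymp2} to the special-orthogonal setting, replacing $\Sp(2n,\C)$ by $\SO(r,\C)$. Both directions go through essentially verbatim once we record a single group-theoretic fact: $\SO(r,\C)$ is semisimple for $r \geq 3$, so $\Rad(\SO(r,\C))=0$. This is precisely why the hypothesis $r \geq 3$ is imposed throughout the subsection, and it is the main (though minor) point to verify.

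For the implication \textit{(i)} $\Rightarrow$ \textit{(ii)}, I would apply Theorem \ref{thm_instflag} to the inclusion $\varphi : \SO(r,\C) \hookrightarrow \GL(r,\C)$. Since $\Rad(\SO(r,\C))=0$ by semisimplicity, the hypothesis that $\varphi$ sends radical to radical is trivially satisfied. The extension of $\Fr_\SO(E,\beta,\tau)$ along $\varphi$ is exactly $\Fr(E)$, so Theorem \ref{thm_instflag} yields that $\Fr(E)$ is Ramanathan-semistable, and then Theorem \ref{thm_ramversusslope} gives slope-semistability of $E$.

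For the reverse implication \textit{(ii)} $\Rightarrow$ \textit{(i)}, I would invoke Theorem \ref{thm_ramversusslopeorth}. Because the section $\tau$ trivializes $\det(E)$, we have $\deg(E) = 0$ and hence $\mu(E) = 0$, so slope-semistability of $E$ gives $\mu(F) \leq 0$ for every non-zero subbundle $F \subsetneq E$. In particular the inequality holds for every isotropic subbundle, which is exactly condition \textit{(ii)} of Theorem \ref{thm_ramversusslopeorth}, yielding Ramanathan-semistability of $\Fr_\SO(E,\beta,\tau)$.

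No serious obstacle arises: the forward direction reduces to the semisimplicity of $\SO(r,\C)$ followed by one invocation each of Theorems \ref{thm_instflag} and \ref{thm_ramversusslope}, while the reverse direction is just the restriction of the slope condition to isotropic subbundles. The parity subtleties between even and odd $r$ that complicated the statement and proof of Theorem \ref{thm_ramversusslopeorth} have already been absorbed there and do not resurface here.
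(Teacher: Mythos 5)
Your proposal is correct and is exactly the argument the paper intends: the paper simply states that the proof is analogous to Lemma \ref{lem_ramversusslopesymp2}, and you carry out that analogy faithfully — Theorem \ref{thm_instflag} applied to $\SO(r,\C)\hookrightarrow\GL(r,\C)$ (with $\Rad(\SO(r,\C))=0$ by semisimplicity for $r\geq 3$) plus Theorem \ref{thm_ramversusslope} for the forward direction, and Theorem \ref{thm_ramversusslopeorth} with $\mu(E)=0$ for the reverse.
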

The proof is analogous to that of Lemma \ref{lem_ramversusslopesymp2}.

\begin{remark}\label{rem_rank2}
    For a special-orthogonal vector bundle $(E,\beta,\tau)$ of rank $2$, $\Fr_\SO(E,\beta,\tau)$ is always Ramanathan-semistable, since $\SO(2,\C)$ is abelian.
\end{remark}

\section{Harder-Narasimhan filtrations}

\subsection{Harder-Narasimhan filtrations of decorated vector bundles}
\sloppy
We wish to construct \textit{Harder-Narasimhan filtrations} of symplectic and special-orthogonal vector bundles. Such filtrations of vector bundles were first constructed by Harder and Narasimhan in \cite[Lemma 1.3.8]{harder-narasimhan_onthecohomologygroupsofmodulispaces}.

\begin{theorem}\label{thm_hardernarasimhan}
    Let $E$ be a vector bundle of rank $r$ on a compact Riemann surface $X$. There exists a filtration of $E$ by subbundles:
    \begin{equation*}
        0=E_0\subsetneq\ldots\subsetneq E_t=E,
    \end{equation*}
    such that:
    \begin{enumerate}[label=(\roman*)]
        \item $F_m:=E_m/E_{m-1}$ is slope-semistable for all $m=1,\ldots,t$.

        \item $\mu(F_1)>\ldots>\mu(F_t)$.
    \end{enumerate}
    The filtration is unique amongst all filtrations of $E$ with these properties.
\end{theorem}

This filtration measures how far away the vector bundle $E$ is from being slope-semistable, as $E$ is slope-semistable if and only if its Harder-Narasimhan filtration is trivial. As we will see in the proof, each quotient $F_m$ contradicts the slope-semistability of a quotient bundle of $E$, so the Harder-Narasimhan filtration measures to what extent $E$ is slope-semistable. There exist analogous filtrations for symplectic and special-orthogonal vector bundles. As seen in \cite[$\S$10]{atiyah-bott_yangmills} and \cite{biswas-holla_harder-narasimhanreductionofprincbund}, these filtrations come from \textit{canonical reductions} of principal bundles. We sketch a proof of Theorem \ref{thm_hardernarasimhan} by constructing the filtration by hand.

\begin{proof} For our purposes, it is important to know just the main steps of the proof, so we refer to \cite[Lemma 1.3.8]{harder-narasimhan_onthecohomologygroupsofmodulispaces} for further details.

    \hspace{16.6em}\textbf{Step 1}

    Define the suprema:
    \begin{align*}
        \deg_{\max}(E) & :=\sup\left\{\deg(F)\ \middle\vert\ F\subseteq E \text{ subbundle}\right\}, \\
        \mu_{\max}(E)  & :=\sup\left\{\mu(F)\ \middle\vert\ F\subseteq E\text{ subbundle}\right\},
    \end{align*}
    which we claim are bounded. We first handle $\deg_{\max}(E)$. The $0$-th sheaf cohomology of a vector bundle is the complex vector space of global sections. For a subbundle $F$ of $E$ of rank $l=1,\ldots,r$, a global section of $F$ is also a global section of $E$, and thus $\dim_\C(H^0(X,E))\geq\dim_\C(H^0(X,F))$.
    We denote the genus of $X$ by $g$, we then have due to Riemann-Roch, from \cite[2. Theorem 2]{friedman_algebraicsurfacesandholomorphicvectorbundles}, that:
    \begin{align}
        \dim_{\C}(H^0(X,F))=\dim_{\C}(H^1(X,F))+\deg(F)+(1-g)l   & \geq\deg(F)+(1-g)l,\nonumber \\
        \dim_{\C}(H^0(X,E))+(g-1)l\geq\dim_{\C}(H^0(X,F))+(g-1)l & \geq\deg(F).\label{eq_rr2}
    \end{align}
    If $g=0$, we have $\dim_{\C}(H^0(X,E))\geq\deg(F)$ due to (\ref{eq_rr2}). Otherwise, $g\geq1$, and we can imply $\dim_{\C}(H^0(X,E))+(g-1)r\geq\deg(F)$ from (\ref{eq_rr2}), so $\deg_{\max}(E)$ is bounded. We now handle $\mu_{\max}(E)$. Let $l=1,\ldots,r$, then by using (\ref{eq_rr2}), we find a subbundle $F_l$ of $E$ of rank $l$, such that $F_l$ has maximal degree amongst all subbundles of $E$ of rank $l$.
    Due to the finite choices of $l$, we have $\mu_{\max}(E)=\max_{l=1}^r\mu(F_l)<\infty$.

    \hspace{16.6em}\textbf{Step 2}

    Since $\mu_{\max}(E)=\max_{l=1}^r\mu(F_l)<\infty$, there exists a subbundle $F\subseteq E$ of maximal rank such that $\mu(F)=\mu_{\max}(E)$.

    \hspace{16.6em}\textbf{Step 3}

    Verify that $F$ is the \textit{strongly contradicting semistability (SCSS)} subbundle $\cG(E)$ of $E$, as defined in \cite[1.3]{harder-narasimhan_onthecohomologygroupsofmodulispaces}, and is hence unique, as shown in \cite[Proposition 1.3.4]{harder-narasimhan_onthecohomologygroupsofmodulispaces}. Being SCCS means $\cG(E)$ is slope-semistable, and if $\cG(E)\subsetneq E$, then $\mu(\cG(E))>\mu_{\max}(E/\cG(E))$. One way to prove uniqueness is by showing that the projection $\cG(E)\rightarrow E/\cG(E)'$ to another SCSS bundle $\cG(E)'$ is trivial, i.e., equal to $0$, using that $\cG(E)$ is slope-semistable, and $\mu(\cG(E))>\mu_{\max}(E/\cG(E)')$.

    \hspace{16.6em}\textbf{Step 4}

    Recursively construct subbundles $E_m$ of $E$ such that $E_m/E_{m-1}=\cG(E/E_{m-1})$ until this process terminates at some $E_t=E$. We have constructed a Harder-Narasimhan filtration.

    \hspace{16.6em}\textbf{Step 5}

    For the uniqueness part, for another Harder-Narasimhan filtration:
    \begin{equation*}
        0=E_0'\subsetneq\ldots\subsetneq E_k'=E,
    \end{equation*}
    we perform an induction on the filtration length $t$. Given $t=1$, then if $k\geq 2$, the projection $E\rightarrow E/E'_{k-1}$ is trivial since $E$ is slope-semistable and $\mu(E)>\mu_{\max}(E/E'_{k-1})$, leading to a contradiction, so $k=1$. Assuming uniqueness for filtrations of length $t-1$, the filtration:
    \begin{equation*}
        0=E_1/E_1\subsetneq E_2/E_1\ldots\subsetneq E_t/E_1=E/E_1,
    \end{equation*}
    is the unique Harder-Narasimhan filtration of $E/E_1$. It remains to show $E_1'=E_1$, which follows from the uniqueness of SCSS bundles in \cite[Proposition 1.3.4]{harder-narasimhan_onthecohomologygroupsofmodulispaces}.
\end{proof}

Special vector bundles and their underlying vector bundles have the same slope-semistability conditions, coming from the Ramanathan-semistability of their special (frame) bundles, following Theorems \ref{thm_ramversusslope} and \ref{thm_ramversusslopespecial}, and Lemma \ref{lem_derived}. Thus, constructing Harder-Narasimhan filtrations for special vector bundles is identical to Theorem \ref{thm_hardernarasimhan}.

\begin{theorem}\label{thm_hardernarasimhanspecial}
    Let $(E,\tau)$ be a special vector bundle of rank $r$ on a compact Riemann surface $X$. There exists a filtration of $(E,\tau)$ by subbundles:
    \begin{equation*}
        0=E_0\subsetneq\ldots\subsetneq E_t=E,
    \end{equation*}
    such that:
    \begin{enumerate}[label=(\roman*)]
        \item $F_m:=E_m/E_{m-1}$ is slope-semistable for all $m=1,\ldots,t$.

        \item $\mu(F_1)>\ldots>\mu(F_t)$.
    \end{enumerate}
    The filtration is unique amongst all filtrations of $(E,\tau)$ with these properties.
\end{theorem}

When constructing Harder-Narasimhan filtrations for symplectic vector bundles, we must restrict our filtrations to isotropic subbundles, due to Theorem \ref{thm_ramversusslopesymp}.

\begin{theorem}\label{thm_hardernarasimhansymp}
    Let $(E,\beta)$ be a symplectic vector bundle of rank $r$ on a compact Riemann surface $X$. There exists a filtration of $(E,\beta)$ by isotropic subbundles:
    \begin{equation*}
        0=E_0\subsetneq\ldots\subsetneq E_t\subsetneq E,
    \end{equation*}
    such that:
    \begin{enumerate}[label=(\roman*)]
        \item $F_m:=E_m/E_{m-1}$ is slope-semistable for all $m=1,\ldots,t$.

        \item $\mu(F_1)>\ldots>\mu(F_t)>0$.

        \item $E_t^\perp/E_t$ is slope-semistable.
    \end{enumerate}
    The filtration is unique amongst all filtrations of $(E,\beta)$ with these properties.
\end{theorem}

\begin{proof} We follow the steps from the proof of Theorem \ref{thm_hardernarasimhan} and replace every instance of \textit{subbundle} with \textit{isotropic subbundle}.

    \hspace{16.6em}\textbf{Step 1}

    Define the suprema:
    \begin{align*}
        \deg_{\max}(E,\beta) & :=\sup\left\{\deg(F)\ \middle\vert\ F\subsetneq E \text{ isotropic subbundle}\right\}, \\
        \mu_{\max}(E,\beta)  & :=\sup\left\{\mu(F)\ \middle\vert\ F\subsetneq E\text{ isotropic subbundle}\right\}.
    \end{align*}
    From the proof of Theorem \ref{thm_hardernarasimhan}, we have $\deg_{\max}(E,\beta)\leq\deg_{\max}(E)<\infty$ and $\mu_{\max}(E,\beta)\leq\mu_{\max}(E)<\infty$.

    \hspace{16.6em}\textbf{Step 2}

    There exists an isotropic subbundle $F\subsetneq E$ of maximal rank such that $\mu(F)=\mu_{\max}(E,\beta)$. If $\mu(F)\leq0$, then the filtration:
    \begin{equation*}
        0=E_0\subsetneq E,
    \end{equation*}
    fulfills \textit{(i)} and \textit{(ii)} vacuously, and \textit{(iii)} is fulfilled due to Theorem \ref{thm_ramversusslopesymp} and Lemma \ref{lem_ramversusslopesymp2}, and the fact that $\mu(E)=0$. Otherwise, we assume $\mu(F)>0$.

    \hspace{16.6em}\textbf{Step 3}

    When $\mu(F)>0$, we verify that $F$ is a \textit{strongly contradicting semistability (SCSS) isotropic} subbundle $\cG(E,\beta)$ of $E$. Being SCCS isotropic means $\cG(E,\beta)$ is slope-semistable, $\mu(\cG(E,\beta))>0$, and if $\cG(E,\beta)$ is not maximally isotropic, then $\mu(\cG(E,\beta))>\mu_{\max}(\cG(E)^\perp/\cG(E),\beta)$.

    \hspace{16.6em}\textbf{Step 4}

    Recursively construct isotropic subbundles $E_m$ of $E$ such that $E_m/E_{m-1}=\cG(E_{m-1}^\perp/E_{m-1},\beta)$ until this process terminates when $E_t$ fulfills condition \textit{(iii)} for some $t$. We have constructed a symplectic Harder-Narasimhan filtration.

    \hspace{16.6em}\textbf{Step 5}

    For the uniqueness part, for another symplectic Harder-Narasimhan filtration:
    \begin{equation*}
        0=E_0'\subsetneq\ldots\subsetneq E_k'\subsetneq E,
    \end{equation*}
    we perform an induction on $t$. Given $t=0$, then due to condition (iii) and  Lemma \ref{lem_ramversusslopesymp2}, we have $k=0$. Assuming uniqueness for $t-1$, the filtration:
    \begin{equation*}
        0=E_1/E_1\subsetneq E_2/E_1\ldots\subsetneq E_t/E_1\subsetneq E_1^\perp/E_1,
    \end{equation*}
    is the unique symplectic Harder-Narasimhan filtration of $(E_1^\perp/E_1,\beta)$.
    It remains to show $E_1'=E_1$, which is done in multiple steps: First, the projection $E_1\rightarrow E/E_1'^\perp$ is trivial, i.e., equal to $0$, using that $E_1$ is slope-semistable, and $\mu(E_1)>0>\mu_{\max}(E/E_1'^\perp)=\mu(E/E_1'^\perp)$. The second inequality is true since $E/E_1'^\perp$ is isomorphic to the dual bundle $E_1'^*$ of $E_1'$, where $E_1'^*$ is slope-semistable, and where $\mu(E_1')>0$ implies $0>\mu(E_1'^*)$. Next, the projection $E_1\rightarrow E_1'^\perp/E_2'^\perp$ is trivial for similar reasons, since $E_1$ is slope-semistable, and $\mu(E_1)>0>\mu_{\max}(E_1'^\perp/E_2'^\perp)=\mu(E_1'^\perp/E_2'^\perp)$. Recursively, the projection $E_1\rightarrow E_{m-1}'^\perp/E_{m}'^\perp$ is trivial for all $m=1,\ldots,k$, and thus $E_1\subseteq E_t'^\perp$. If $E_t'$ is Lagrangian, then the projection $E_1\rightarrow E_t'^\perp/E_t'$ is trivial, since $E_t'^\perp/E_t'=0$. Otherwise, $E_t'$ is not Lagrangian, and to prove that $E_1\rightarrow E_t'^\perp/E_t'$ is trivial, we use that $E_1$ is slope-semistable, and $\mu(E_1)>0=\mu_{\max}(E_t'^\perp/E_t')=\mu(E_t'^\perp/E_t')$. The last equality follows from Theorem \ref{thm_ramversusslopesymp} and Lemma \ref{lem_ramversusslopesymp2}, and the fact that $\mu(E_t'^\perp/E_t')=0$.
    Due to $E_1\subseteq E_t'$, both $E_1$ and $E_1'$ are SCCS subbundles of $E_t'$, which are unique due to \cite[Proposition 1.3.4]{harder-narasimhan_onthecohomologygroupsofmodulispaces}. Hence, $E_1=E_1'$.
\end{proof}

The special-orthogonal Harder-Narasimhan filtration is constructed in an analogous way, but we have to make a slight distinction between the cases of even and odd ranks.

\begin{theorem}\label{thm_hardernarasimhanorthodd}
    Let $(E,\beta,\tau)$ be a special-orthogonal vector bundle of rank $r=2n+1\geq3$ on a compact Riemann surface $X$. There exists a filtration of $(E,\beta,\tau)$ by isotropic subbundles:
    \begin{equation*}
        0=E_0\subsetneq\ldots\subsetneq E_t\subsetneq E,
    \end{equation*}
    such that:
    \begin{enumerate}[label=(\roman*)]
        \item $F_m:=E_m/E_{m-1}$ is slope-semistable for all $m=1,\ldots,t$.

        \item $\mu(F_1)>\ldots>\mu(F_t)>0$.

        \item $E_t^\perp/E_t$ is slope-semistable.
    \end{enumerate}
    The filtration is unique amongst all filtrations of $(E,\beta,\tau)$ with these properties.
\end{theorem}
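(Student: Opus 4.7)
The plan is to mirror the five-step proof of Theorem \ref{thm_hardernarasimhansymp} verbatim, replacing the symplectic bundle $(E,\beta)$ with the special-orthogonal bundle $(E,\beta,\tau)$ and substituting Theorem \ref{thm_ramversusslopeorth} and Lemma \ref{lem_ramversusslopeorth2} for Theorem \ref{thm_ramversusslopesymp} and Lemma \ref{lem_ramversusslopesymp2}. The odd-rank hypothesis $r=2n+1$ is what makes this direct translation possible: by Lemma \ref{lem_reductsubbundcorrorth}, every isotropic subbundle corresponds to a reduction to a standard maximal parabolic subgroup of $\SO(r,\C)$ (the two-simple-root anomaly (\ref{eq_orthevenissue}) only arises in even rank), so condition \textit{(iii.i)} of Theorem \ref{thm_ramversusslopeorth} applies uniformly to all isotropic subbundles.

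For Step 1, we introduce $\deg_{\max}(E,\beta,\tau)$ and $\mu_{\max}(E,\beta,\tau)$ as suprema over isotropic subbundles; both are finite by comparison with the corresponding quantities for the underlying vector bundle $E$ treated in the proof of Theorem \ref{thm_hardernarasimhan}. Step 2 chooses an isotropic subbundle $F \subsetneq E$ of maximal rank achieving $\mu_{\max}(E,\beta,\tau)$; if $\mu(F)\leq 0$, the trivial filtration $0 \subsetneq E$ satisfies \textit{(i)} and \textit{(ii)} vacuously, and \textit{(iii)} follows from $\mu(E)=0$ and Lemma \ref{lem_ramversusslopeorth2}. Otherwise, Step 3 identifies $F$ with a strongly contradicting semistability (SCSS) isotropic subbundle $\cG(E,\beta,\tau)$, characterised by being slope-semistable, of positive slope, and, unless $\cG$ is maximally isotropic, satisfying $\mu(\cG) > \mu_{\max}(\cG^\perp/\cG,\beta)$. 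Step 4 then builds the filtration recursively inside the successive special-orthogonal quotients $(E_{m-1}^\perp/E_{m-1}, \beta, \tau)$, which remain of odd rank, terminating once condition \textit{(iii)} holds.

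The main obstacle is Step 5, uniqueness, which we carry out by induction on $t$. The base case $t=0$ is immediate from condition \textit{(iii)} and Lemma \ref{lem_ramversusslopeorth2}. For the inductive step, given a second filtration $0 = E_0' \subsetneq \cdots \subsetneq E_k' \subsetneq E$, the heart is to show $E_1 \subseteq E_t'$ by proving the vanishing of the three families of projections $E_1 \to E/E_1'^\perp$, $E_1 \to E_{m-1}'^\perp/E_m'^\perp$, and $E_1 \to E_t'^\perp/E_t'$. The first uses $E/E_1'^\perp \cong E_1'^*$ via $\beta$, whose slope is $-\mu(E_1') < 0 < \mu(E_1)$; the intermediate ones use the analogous self-duality of the orthogonal quotients; the last uses that $E_t'^\perp/E_t'$ inherits a nondegenerate symmetric form from $\beta$, making it a self-dual special-orthogonal bundle of slope zero, slope-semistable by Lemma \ref{lem_ramversusslopeorth2} applied to condition \textit{(iii)} of the alternative filtration. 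A minor subtlety: when $E_t'$ is maximally isotropic of rank $n$, the quotient $E_t'^\perp/E_t'$ has rank $1$ (since $r$ is odd), which is trivially slope-semistable of slope $0$. Once $E_1, E_1' \subseteq E_t'$ is established, both become ordinary SCSS subbundles of the underlying vector bundle $E_t'$, and \cite[Proposition 1.3.4]{harder-narasimhan_onthecohomologygroupsofmodulispaces} forces $E_1 = E_1'$; the induction hypothesis applied to $(E_1^\perp/E_1, \beta, \tau)$ completes the proof.
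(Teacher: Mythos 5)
Your proposal is correct and follows essentially the same route as the paper, which itself only states that the proof is obtained by running the five-step argument of the symplectic case with Theorem \ref{thm_ramversusslopeorth} and Lemma \ref{lem_ramversusslopeorth2} substituted for their symplectic counterparts. Your added observations — that odd rank avoids the two-simple-root anomaly (\ref{eq_orthevenissue}) so every isotropic subbundle gives a standard maximal parabolic reduction, and that a maximally isotropic $E_t'$ yields a rank-one quotient $E_t'^\perp/E_t'$ which is trivially semistable of slope zero — are exactly the points that make the translation go through.
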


The proof of this theorem is analogous to that of Theorem \ref{thm_hardernarasimhanspecial}. Instead of applying Theorem \ref{thm_ramversusslopesymp} and Lemma \ref{lem_ramversusslopesymp2}, the proof makes use of Theorem \ref{thm_ramversusslopeorth} and Lemma \ref{lem_ramversusslopeorth2}. In the case of even rank, we need to modify condition \textit{(iii)} in special-orthogonal Harder-Narasimhan filtrations.

\begin{theorem}\label{thm_hardernarasimhanortheven}
    Let $(E,\beta,\tau)$ be a special-orthogonal vector bundle of rank $r=2n\geq4$ on a compact Riemann surface $X$. There exists a filtration of $(E,\beta,\tau)$ by isotropic subbundles:
    \begin{equation*}
        0=E_0\subsetneq\ldots\subsetneq E_t\subsetneq E,
    \end{equation*}
    such that:
    \begin{enumerate}[label=(\roman*)]
        \item $F_m:=E_m/E_{m-1}$ is slope-semistable for all $m=1,\ldots,t$.

        \item $\mu(F_1)>\ldots>\mu(F_t)>0$.

        \item $E_t$ is not of rank $n-1$, and $E_t^\perp/E_t$ is slope-semistable.
    \end{enumerate}
    The filtration is unique amongst all filtrations of $(E,\beta,\tau)$ with these properties.
\end{theorem}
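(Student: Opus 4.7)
The plan is to adapt the five-step proof of Theorem \ref{thm_hardernarasimhansymp} to the special-orthogonal setting of even rank, substituting Theorem \ref{thm_ramversusslopeorth} and Lemma \ref{lem_ramversusslopeorth2} for Theorem \ref{thm_ramversusslopesymp} and Lemma \ref{lem_ramversusslopesymp2} throughout. First I would define $\deg_{\max}(E,\beta,\tau)$ and $\mu_{\max}(E,\beta,\tau)$ as suprema over isotropic subbundles of $(E,\beta,\tau)$; both are bounded by their unrestricted counterparts from Theorem \ref{thm_hardernarasimhan}, since every isotropic subbundle is in particular a subbundle. I would then select an isotropic $F$ of maximal rank attaining $\mu(F) = \mu_{\max}$ and verify it is the strongly contradicting semistability isotropic subbundle $\cG(E,\beta,\tau)$, with uniqueness established by the usual projection-vanishing argument.

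Next I would iterate the construction, defining $E_m/E_{m-1} = \cG(E_{m-1}^\perp/E_{m-1}, \beta, \tau)$ and stopping as soon as either the SCSS output is trivial or $E_m$ is Lagrangian. If the resulting terminal $E_t$ satisfies $\rk(E_t) \neq n-1$, then conditions \textit{(i)}--\textit{(iii)} follow essentially verbatim from the symplectic argument, with Theorem \ref{thm_ramversusslopeorth} and Lemma \ref{lem_ramversusslopeorth2} ensuring that slope-semistability of $E_t^\perp/E_t$ is equivalent to the expected isotropic slope conditions, so that termination of the recursion automatically yields \textit{(iii)}.

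The main obstacle is the terminal case $\rk(E_t) = n-1$, which is the novel feature of the even-rank special-orthogonal situation. Here $E_t^\perp/E_t$ is a rank-$2$ orthogonal bundle with no positive-slope isotropic subbundle, hence isomorphic to $L \oplus L^{-1}$ with $\deg L = 0$; both Lagrangian lines of $E_t^\perp/E_t$ give rise to Lagrangian extensions of $E_t$ inside $E$. To enforce $\rk(E_t) \neq n-1$ while preserving the other conditions, I would replace $E_t$ by one of these Lagrangian extensions $E_t^+ \subseteq E$ of rank $n$, so that \textit{(iii)} holds trivially with $(E_t^+)^\perp/E_t^+ = 0$. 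The delicate point is that the new top layer $E_t^+/E_{t-1}$ fits in an exact sequence $0 \to F_t \to E_t^+/E_{t-1} \to L \to 0$ with $\mu(F_t) > 0 = \mu(L)$, so is not automatically slope-semistable; the resolution exploits the non-correspondence between rank-$(n-1)$ isotropic subbundles and standard maximal parabolic reductions of $\SO(2n,\C)$, recorded in (\ref{eq_orthevenissue}) of Lemma \ref{lem_reductsubbundcorrorth}, together with the rigid structure of rank-$2$ orthogonal bundles (Remark \ref{rem_rank2}), in order to re-run the SCSS selection at the penultimate step in a way that yields a semistable top layer.

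For Step 5 (uniqueness), I would imitate the symplectic induction on $t$: given any other filtration $(E_m')_{0 \leq m \leq k}$ satisfying \textit{(i)}--\textit{(iii)}, the projections $E_1 \to E/(E_k')^\perp$ and $E_1 \to (E_{m-1}')^\perp/(E_m')^\perp$ are forced to vanish by combining slope-semistability of $E_1$, positivity of $\mu(E_1)$, and non-positivity of the relevant target slopes. The exclusion $\rk(E_t') \neq n-1$ in condition \textit{(iii)} is essential in the last step of the induction, as it is precisely what allows Lemma \ref{lem_ramversusslopeorth2} to convert slope-semistability of $(E_t')^\perp/E_t'$ into the equality $\mu_{\max}((E_t')^\perp/E_t') = \mu((E_t')^\perp/E_t') = 0$ needed to force the final projection to vanish. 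Once $E_1 = E_1'$ is established, applying the induction hypothesis to $E_1^\perp/E_1$ completes the proof.
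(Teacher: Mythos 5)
Your overall plan — the five-step SCSS recursion over isotropic subbundles, with Theorem \ref{thm_ramversusslopeorth} and Lemma \ref{lem_ramversusslopeorth2} replacing their symplectic counterparts, and the symplectic-style projection-vanishing induction for uniqueness — is exactly the route the paper takes (the paper in fact gives no more detail than ``analogous to the previous theorems''). The genuine gap is precisely at the one point where the even case is \emph{not} analogous, namely the terminal case $\rk(E_t)=n-1$, and your proposed resolution does not work as described. In that case you correctly observe that $E_t^\perp/E_t\cong L\oplus L^{-1}$ with $\deg L=0$; but then replacing $E_t$ by a Lagrangian extension $E_t^+$ makes the new top layer $E_t^+/E_{t-1}$ an extension of $L$ (slope $0$) by $F_t$ (slope $>0$), which is therefore \emph{never} slope-semistable — $F_t$ strictly destabilizes it — so condition \textit{(i)} fails, and no ``re-running of the SCSS selection at the penultimate step'' can repair this, since any filtration step of the form $E_{t-1}\subsetneq E_t^+$ with $E_t^+$ Lagrangian containing $E_t$ has this forced destabilizing sub. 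The only other option, keeping $E_t$ and $E_t^+$ as separate steps, fails \textit{(ii)} because $\mu(E_t^+/E_t)=\deg L=0$. So as written your construction does not produce a filtration satisfying \textit{(i)}--\textit{(iii)} whenever the recursion stops at rank $n-1$, which is exactly the situation the modified condition \textit{(iii)} is about; the appeal to (\ref{eq_orthevenissue}) and Remark \ref{rem_rank2} gestures at the relevant phenomenon but is not an argument.

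Moreover this terminal configuration genuinely occurs, so it cannot be dismissed: take $E=L^{\oplus(n-1)}\oplus\mathcal{O}_X^{\oplus 2}\oplus(L^{-1})^{\oplus(n-1)}$ with $\deg L>0$ and the hyperbolic pairing matching $L$-factors with $L^{-1}$-factors and the two trivial factors with each other. The SCSS recursion selects $E_1=L^{\oplus(n-1)}$, of rank $n-1$, and $E_1^\perp/E_1\cong\mathcal{O}_X^{\oplus 2}$ is already semistable, so the recursion halts there; neither of your two repair strategies then yields a filtration satisfying \textit{(i)}--\textit{(iii)}, and a short check (any subbundle occurring in a chain with all quotients of positive slope must embed into $L^{\oplus(n-1)}$) shows no other chain does either. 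Closing the proof therefore requires a genuine additional argument for the rank-$(n-1)$ terminal case — either showing how the intended filtration is to be defined there, or confronting the fact that the exclusion in \textit{(iii)} is in tension with existence — and this is the step your proposal leaves open; the uniqueness induction, by contrast, is fine, and indeed there the hypothesis that $(E_t')^\perp/E_t'$ is slope-semistable already gives $\mu_{\max}((E_t')^\perp/E_t')=0$ directly, without invoking Lemma \ref{lem_ramversusslopeorth2}.
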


The modification to condition \textit{(iii)} is needed since otherwise in some cases where $E_t$ is of rank $n-1$, we can extend the filtration to:
\begin{equation*}
    0=E_0\subsetneq\ldots\subsetneq E_t\subsetneq E_{t+1}\subsetneq E,
\end{equation*}
fulfilling \textit{(i)}, \textit{(ii)}, and \textit{(iii)}, which contradicts the uniqueness of Harder-Narasimhan filtrations. This extension was mentioned in the proof of Theorem \ref{thm_ramversusslopeorth}, where we constructed an isotropic subbundle $F'$ containing an isotropic subbundle $F=E_t$ of rank $n-1$, using \cite[Theorem 3.9 (ii)]{conrad_standardparabolicsubgroups} at every fiber $F_x$ of $F$. From the reductive groups we study, this is only possible in $\SO(2n,\C)$, where parabolic subgroups are not in bijection with isotropic flags of $\C^{2n}$.

Just like Theorem \ref{thm_hardernarasimhanortheven}, the proof is analogous to Theorem \ref{thm_hardernarasimhanspecial}, and makes use of Theorem \ref{thm_ramversusslopeorth} and Lemma \ref{lem_ramversusslopeorth2}.

\begin{remark}\label{rem_hncompare}
    By comparing all the variants of Harder-Narasimhan filtrations from Theorems \ref{thm_hardernarasimhan}-\ref{thm_hardernarasimhanortheven}, it is clear that the symplectic and special-orthogonal Harder-Narasimhan filtrations:
    \begin{equation*}
        0=E_0\subsetneq\ldots\subsetneq E_t\subsetneq E,
    \end{equation*}
    induce an extended filtration including the corresponding co-isotropic subbundles:
    \begin{equation*}
        0=E_0\subsetneq\ldots\subsetneq E_t\subseteq E_t^\perp\subsetneq\ldots\subsetneq E_0^\perp=E,
    \end{equation*}
    which is the Harder-Narasimhan filtration of the underlying vector bundle. When we introduce canonical reductions in Subsection \ref{subsec_ab}, we will see why this is expected.
\end{remark}

We want an example of special-orthogonal Harder-Narasimhan filtrations, and want to compare them to the usual Harder-Narasimhan filtrations. For this, we look at filtrations of adjoint bundles.

\begin{lemma}\label{lem_adjointspecialorth}
    For a principal $G$-bundle $\xi$, where $G$ is complex and reductive, there exists a special-orthogonal vector bundle structure on the adjoint bundle $\ad(\xi)$.
\end{lemma}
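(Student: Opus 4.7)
The plan is to produce (i) a nondegenerate, fibrewise symmetric bilinear form $\beta$ on $\ad(\xi)$, and (ii) a nowhere-vanishing section $\tau$ of $\det(\ad(\xi))$, both of which are forced by the $\Ad$-invariant geometry of $\Lie(G)$.

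First, I would fix a nondegenerate, $\Ad$-invariant, symmetric bilinear form $B$ on $\Lie(G)$. This is exactly the form already invoked in Remark \ref{rem_adbundleexactseq}: by reductivity, $\Lie(G) = \Lie([G,G]) \oplus \Lie(\Zbf(G)^0)$; on the semisimple factor $\Lie([G,G])$ one may take the Killing form, which is nondegenerate and $\Ad$-invariant; on the abelian factor $\Lie(\Zbf(G)^0)$ one picks any nondegenerate symmetric form, and this is automatically $\Ad$-invariant because $\Ad$ acts trivially on the centre. Their orthogonal direct sum gives $B$.

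Second, I would transport $B$ to $\ad(\xi)$. Pick cocycles $(g_{i,j})_{i,j\in J}$ of $\xi$, so that the cocycles of $\ad(\xi)$ are $(\Ad(g_{i,j}))_{i,j\in J}$. The $\Ad$-invariance of $B$ means that these transition functions preserve $B$ fibrewise; the locally constant form $B$ on each trivialisation of $\ad(\xi)$ therefore glues to a global, nondegenerate, symmetric form $\beta : \ad(\xi) \oplus \ad(\xi) \to X\times\C$, making $(\ad(\xi),\beta)$ an orthogonal vector bundle in the sense of Definition \ref{def_orthbund}.

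Third, I would trivialise $\det(\ad(\xi))$. Under the convention that $G$ is connected (implicit throughout the paper), $\Ad(G)$ is a connected subgroup of $\mathbf{O}(\Lie(G),B)$ containing the identity, and therefore lies in the identity component $\SO(\Lie(G),B)$. Hence every cocycle $\Ad(g_{i,j})$ has determinant $1$, so $\det(\ad(\xi))$ has trivial transition functions; the constant section equal to $1$ in this trivialisation is a nowhere-vanishing section $\tau$ of $\det(\ad(\xi))$. The triple $(\ad(\xi),\beta,\tau)$ is then a special-orthogonal vector bundle. The only real subtlety is the last step, which genuinely needs the connectedness of $G$: without it $\Ad(G)$ could meet both components of $\mathbf{O}(\Lie(G),B)$ and the determinant line bundle need not be trivial. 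A secondary caveat is that, for $G$ with nontrivial centre, the form $B$ is not canonical; but the statement only asserts existence.
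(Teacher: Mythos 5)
Your proposal is correct, and its overall architecture (an $\Ad$-invariant nondegenerate symmetric form giving the orthogonal structure, plus triviality of $\det(\ad(\xi))$ giving the special structure) matches the paper's proof; the two arguments diverge in how they trivialise the determinant. The paper fixes an embedding $G\hookrightarrow\GL(r,\C)$ via \cite[I. 1.10 Proposition]{borel_linearalgebraicgroups}, views $\Ad$ as matrix conjugation, writes the cocycles of $\ad(\xi)$ as $((\sigma_{i,j}^{-1})^T\otimes\sigma_{i,j})_{i,j\in J}$ and observes that their determinants are identically $1$, so the induced cocycles of $\det(\ad(\xi))$ are those of $X\times\C$. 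You instead argue intrinsically: since $\Ad(G)$ preserves $B$ and $G$ is connected, $\Ad(G)$ lands in the identity component $\SO(\Lie(G),B)$ of the orthogonal group, so $\det\circ\Ad\equiv 1$ and the determinant cocycles are trivial. Your route buys two things: it makes explicit where connectedness of $G$ (implicit in the paper's convention for reductive groups) enters, and it avoids the auxiliary choice of a matrix embedding, which the paper itself lists afterwards as one of the non-canonical choices in the construction; it also sidesteps the small gap in reading off the determinant of the conjugation action restricted to the invariant subspace $\Lie(G)\subseteq\Mat(r\times r,\C)$ from the determinant on all of $\Mat(r\times r,\C)$. The paper's route, on the other hand, is the one that produces the concrete cocycle descriptions reused in Example \ref{ex_orthHNfiltration}. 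Your explicit construction of $B$ (Killing form on $\Lie([G,G])$, any nondegenerate symmetric form on the centre, orthogonal direct sum, with $\Ad$-invariance on the centre coming from triviality of the central action) is a harmless elaboration of the paper's one-line appeal to reductivity.
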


\begin{proof}
    Since $G$ is reductive, there exists a nondegenerate adjoint-invariant form on $\Lie(G)$. Using that $\ad(\xi)$ is induced by the extension of $\xi$ by the adjoint representation $\Ad:G\rightarrow\GL(\Lie(G))$, this form induces a nondegenerate symmetric form $\beta$ on $\ad(\xi)$, such that $(\ad(\xi),\beta)$ is an orthogonal vector bundle. To show that $(\ad(\xi),\beta)$ is special-orthogonal, we must show that the determinant bundle $\det(\ad(\xi))$ is isomorphic to the product bundle $X\times\C$. Since $G$ is an affine group, it can be viewed as a matrix subgroup of $\GL(r,\C)$, following \cite[I. 1.10 Proposition]{borel_linearalgebraicgroups}, so the adjoint representation $\Ad:G\rightarrow\GL(\Lie(G))$ can be viewed as matrix conjugation within $\GL(r,\C)$. Cocycles $(\sigma_{i,j})_{i,j\in J}$ of $\xi$ induce cocycles $(\Ad\circ\sigma_{i,j})_{i,j\in J}$ of $\ad(\xi)$, equal to $((\sigma_{i,j}^{-1})^T\otimes\sigma_{i,j})_{i,j\in J}$. This induces cocycles $(\det((\sigma_{i,j}^{-1})^T\otimes\sigma_{i,j}))_{i,j\in J}$ of $\det(\ad(\xi))$, which are cocycles of the product bundle $X\times\C$. Thus, the claim follows.
\end{proof}

Lemma \ref{lem_adjointspecialorth} does not give a unique special-orthogonal vector bundle structure on $\ad(\xi)$, as we made the choices of an adjoint-invariant form on $\Lie(G)$, cocycles $(\sigma_{i,j})_{i,j\in J}$ of $\xi$, and an embedding of $G$ into $\GL(r,\C)$. However, in concrete examples, e.g.\ $G = \Sp(2n,\C)$ or $G = \SO(r,\C)$, these are often canonical. For the upcoming example of special-orthogonal Harder-Narasimhan filtrations (Example \ref{ex_orthHNfiltration}), we will make use of the following result from \cite[Lemma 10.1]{atiyah-bott_yangmills}.

\begin{lemma}\label{lem_semistabletensor}
    Let $E$ and $E'$ be slope-semistable vector bundles of rank $r$ and $r'$, then $E\otimes E'$ is slope-semistable of rank $rr'$, with slope $\mu(E\otimes E')=\mu(E)+\mu(E')$.
\end{lemma}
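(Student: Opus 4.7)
The plan separates the slope computation from the semistability argument. For the slope, standard multiplicativity formulas give $\deg(E \otimes E') = r' \deg(E) + r \deg(E')$ and $\rk(E \otimes E') = rr'$, whence
\[
    \mu(E \otimes E') = \frac{r' \deg(E) + r \deg(E')}{rr'} = \mu(E) + \mu(E').
\]

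For semistability, the plan is to invoke Theorem \ref{thm_instflag} applied to the tensor product homomorphism $\varphi \colon \GL(r,\C) \times \GL(r',\C) \to \GL(rr',\C)$. The fibered product $\Fr(E) \times_X \Fr(E')$ is a principal-$(\GL(r,\C) \times \GL(r',\C))$-bundle whose extension through $\varphi$ is $\Fr(E \otimes E')$. One checks that $\varphi$ sends the radical to the radical: since the radicals of these reductive groups are their centers of scalar matrices, and $\varphi(\lambda I_r, \lambda' I_{r'}) = \lambda \lambda' I_{rr'}$ remains scalar, the hypothesis of Theorem \ref{thm_instflag} is satisfied.

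To apply Theorem \ref{thm_instflag}, one first verifies that $\Fr(E) \times_X \Fr(E')$ is Ramanathan-semistable as a principal bundle for the product group. The maximal parabolic subgroups of $\GL(r,\C) \times \GL(r',\C)$ are of the form $P_I \times \GL(r',\C)$ or $\GL(r,\C) \times P_{I'}$, so a reduction of the product bundle to such a parabolic amounts to a reduction of $\Fr(E)$ to $P_I$ (respectively of $\Fr(E')$ to $P_{I'}$). The adjoint bundle of the resulting reduction splits as a direct sum whose nontrivial summand coincides with that of the corresponding factor, so Ramanathan-semistability of the product follows from that of $\Fr(E)$ and $\Fr(E')$, which by Theorem \ref{thm_ramversusslope} is equivalent to slope-semistability of $E$ and $E'$. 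Theorem \ref{thm_instflag} then yields Ramanathan-semistability of $\Fr(E \otimes E')$, and Theorem \ref{thm_ramversusslope} in the reverse direction gives slope-semistability of $E \otimes E'$.

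The main obstacle is verifying that semistability of a product principal bundle reduces to joint semistability of its factors: this requires correctly identifying the maximal parabolic subgroups of a product reductive group and tracking the behaviour of the adjoint bundle through the extension. All remaining steps are direct applications of results already stated in the text.
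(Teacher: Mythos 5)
Your proof is correct, and it takes a genuinely different route from the paper's. The paper follows Atiyah--Bott directly: it invokes the Narasimhan--Seshadri correspondence (stable bundles $\leftrightarrow$ irreducible projective unitary representations of $\pi_1(X)$), observes that tensoring unitary representations stays unitary, and then extends from the stable to the semistable case via Jordan--Hölder filtrations. You instead stay entirely within the principal-bundle formalism: you form the fibered product $\Fr(E)\times_X\Fr(E')$ as a $\bigl(\GL(r,\C)\times\GL(r',\C)\bigr)$-bundle, check that it is Ramanathan-semistable using the product decomposition of maximal parabolics and the splitting $\ad\bigl((s_1^*\Fr(E))\times_X\Fr(E')\bigr)\cong\ad(s_1^*\Fr(E))\oplus\ad(\Fr(E'))$ (the second summand having degree zero), verify the radical condition for the tensor homomorphism, and apply Theorem \ref{thm_instflag} (Ramanan--Ramanathan) followed by Theorem \ref{thm_ramversusslope}. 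Both approaches ultimately lean on a deep external input --- Narasimhan--Seshadri in the paper's case, the Ramanan--Ramanathan instability-flag theorem in yours --- but your argument is more self-contained relative to the material already assembled in Section~\ref{sec_semistab}, uses no analytic input, and generalizes cleanly to arbitrary tensor representations of products of reductive groups. One small point of precision: rather than saying the adjoint bundle ``splits as a direct sum whose nontrivial summand coincides with that of the corresponding factor,'' it is cleaner to note that for a reduction to $P_I\times\GL(r',\C)$ one has $s^*V_{(\Fr(E)\times_X\Fr(E'))/(P_I\times\GL(r',\C))}\cong s_1^*V_{\Fr(E)/P_I}$, so the degree inequality is literally inherited from that of $\Fr(E)$; this avoids having to appeal separately to the self-duality of $\ad(\Fr(E'))$.
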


\begin{proof}
    Due to properties of tensor products, $E\otimes E'$ has rank $rr'$.
    Using properties of degrees on tensor bundles, we obtain:
    \begin{equation*}
        \mu(E\otimes E')=\deg(E\otimes E')/rr'=(r'\deg(E)+r\deg(E'))/rr'=\mu(E)+\mu(E').
    \end{equation*}
    For the slope-semistability of $E\otimes E'$, we first assume that $E$ and $E'$ are slope-stable. In \cite[Appendix 4.1-4.2]{wells-garciaprada_differentialanalysis}, we see that subbundles $E$ and $E'$ correspond to irreducible unitary representations of an extension $\Gamma$ of $\pi_1(X)$. By tensoring these representations, we get another unitary representation of $\Gamma$ that corresponds to $E\otimes E'$, which is thus slope-semistable. Following \cite[Lemma 10.1]{atiyah-bott_yangmills}, to generalize to the situation where $E$ and $E'$ are only slope-semistable, we may use Jordan-Hölder filtrations of $E$ and $E'$, as discussed in \cite[1.5]{huybrechts-lehn_geometryofmodulispaces}.
\end{proof}

With Lemmas \ref{lem_adjointspecialorth} and \ref{lem_semistabletensor}, we can construct an example of a special-orthogonal vector bundle, and find its special-orthogonal Harder-Narasimhan filtration.

\begin{example}\label{ex_orthHNfiltration}
    Let $E$ and $E'$ be slope-semistable vector bundles, of ranks $r$ and $r'$ on a compact Riemann surface $X$. The direct sum $E\oplus E'$ has cocycles of the form:
    \begin{equation*}
        \sigma_{i,j}:U_i\cap U_j\rightarrow \GL(r,\C)\times\GL(r',\C),\quad x\mapsto(\alpha_{i,j}(x),\delta_{i,j}(x))
    \end{equation*}
    where $(\alpha_{i,j})_{i,j\in J}$ are cocycles of $E$ and $(\delta_{i,j})_{i,j\in J}$ are cocycles of $E'$. With respect to these cocycles, an embedding $\GL(r,\C)\times\GL(r',\C)\hookrightarrow\GL(r+r',\C)$ as diagonal block matrices,  and the adjoint-invariant form $\beta(X,Y)=\tr(XY)$ on $\Lie(\GL(r+r',\C))$, Lemma \ref{lem_adjointspecialorth} induces a special-orthogonal vector bundle structure $(\ad(E\oplus E'),\beta,\tau)$ on $\ad(E\oplus E'):=\ad(\Fr(E\oplus E'))$. Since $\ad(E\oplus E')$ is isomorphic to the endomorphism bundle on $E\oplus E'$, we have the isomorphisms:
    \begin{align}
        \nonumber\ad(E\oplus E') & \cong(E\oplus E')^*\otimes(E\oplus E')                                                                          \\
        \nonumber                & \cong(E^*\oplus E'^*)\otimes(E\oplus E')                                                                        \\
                                 & \cong(E^*\otimes E)\oplus(E'^*\otimes E)\oplus(E^*\otimes E')\oplus(E'^*\otimes E').\label{eq_orthHNfiltration}
    \end{align}
    With respect to the form $\beta(X,Y)=\tr(XY)$, we have the isotropic subbundles of $\ad(E\oplus E')$:
    \begin{align}
        E'^*\otimes E           & \subsetneq (E'^*\otimes E)^\perp=(E'^*\otimes E)\oplus(E^*\otimes E)\oplus(E'^*\otimes E'), \label{ex_orthHNfiltration2} \\
        \nonumber E^*\otimes E' & \subsetneq (E^*\otimes E')^\perp=(E^*\otimes E')\oplus(E^*\otimes E)\oplus(E'^*\otimes E'),
    \end{align}
    and the special-orthogonal subbundles $(E^*\otimes E)$, $(E'^*\otimes E')$, and $(E^*\otimes E)\oplus(E'^*\otimes E')$. Since $E$ and $E'$ are slope-semistable, each of the four summands from (\ref{eq_orthHNfiltration}) are slope-semistable due to Lemma \ref{lem_semistabletensor}, which helps us determine the Harder-Narasimhan filtrations of $\ad(E\oplus E')$. If $\mu(E)=\mu(E')$, then by using that slope-semistability is preserved under dualizing bundles, Lemma \ref{lem_semistabletensor} applied to (\ref{eq_orthHNfiltration}) implies that each summand of (\ref{eq_orthHNfiltration}) is slope-semistable. Furthermore, each summand has slope $0$, so it follows that $\ad(E\oplus E')$ is slope-semistable and thus has a trivial Harder-Narasimhan filtration. Also, $(\ad(E\oplus E'),\beta,\tau)$ has a trivial special-orthogonal Harder-Narasimhan filtration. Otherwise, let $\mu(E)>\mu(E')$. We have the following Harder-Narasimhan filtration of $E$ using (\ref{eq_orthHNfiltration}) and Lemma \ref{lem_semistabletensor}:
    \begin{equation*}
        0\subsetneq E'^*\otimes E\subsetneq (E'^*\otimes E)\oplus (E^*\otimes E)\oplus(E'^*\otimes E')\subsetneq\ad(E\oplus E').
    \end{equation*}
    However, the special-orthogonal Harder-Narasimhan filtration of $(\ad(E\oplus E'),\beta,\tau)$ is:
    \begin{equation}\label{ex_orthHNfiltration3}
        0\subsetneq E'^*\otimes E\subsetneq \ad(E\oplus E').
    \end{equation}
    As seen in Remark \ref{rem_hncompare}, the Harder-Narasimhan filtration is the extension of the special-orthogonal Harder-Narasimhan filtration, with the co-isotropic subbundles included.
\end{example}

\subsection{Canonical reductions in the sense of Atiyah and Bott}\label{subsec_ab}
We now make the connection between Harder-Narasimhan and canonical reductions in the works of Atiyah and Bott in \cite{atiyah-bott_yangmills} and Biswas and Holla in \cite{biswas-holla_harder-narasimhanreductionofprincbund}. In this subsection, we focus and recall the approach of Atiyah and Bott in \cite{atiyah-bott_yangmills}, for which we first need a technical result from \cite[Lemma 10.2]{atiyah-bott_yangmills}.

\begin{lemma}\label{lem_slopecomparisonlemma}
    Let $E$ and $E'$ be vector bundles with filtrations by subbundles:
    \begin{equation*}
        0=E_0\subsetneq\ldots\subsetneq E_t=E,\quad
        0=E'_0\subsetneq\ldots\subsetneq E'_{t'}=E',
    \end{equation*}
    both with slope-semistable quotients $F_m:=E_m/E_{m-1}$, $m=1,\ldots t$, and $F'_m:=E'_m/E'_{m-1}$, $m=1,\ldots t'$. Assuming the quotients fulfill $\mu(F_m)\geq q$, and $\mu(F'_m)<q$, for some $q$, every morphism $E\rightarrow E'$ is trivial, i.e., equal to $0$.
\end{lemma}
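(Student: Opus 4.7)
The plan is to reduce the statement to the standard fact that no nonzero morphism can exist between two slope-semistable vector bundles when the slope of the source strictly exceeds the slope of the target, and then to bootstrap this to filtered situations by a double induction along the two filtrations.

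First I would establish the following key fact: if $A$ and $B$ are slope-semistable vector bundles with $\mu(A) > \mu(B)$, then any morphism $\varphi: A \to B$ is zero. Suppose toward a contradiction that $\varphi \neq 0$. Let $K \subseteq A$ be the saturation of $\ker \varphi$ (so that $A/K$ is a vector bundle), and let $I \subseteq B$ be the saturation of $\operatorname{im}\varphi$. Saturation does not decrease the degree, so one still has a generically injective map $A/K \to I$ between bundles of the same rank, giving $\deg(A/K) \leq \deg(I)$, hence $\mu(A/K) \leq \mu(I)$. Semistability of $A$ yields $\mu(A) \leq \mu(A/K)$ (applied to the subbundle $K \subsetneq A$), and semistability of $B$ yields $\mu(I) \leq \mu(B)$. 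Combining gives $\mu(A) \leq \mu(B)$, contradicting $\mu(A) > \mu(B)$.

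Next, fix a morphism $\varphi: E \to E'$ and show by descending induction on $m' \in \{t', t'-1, \dots, 0\}$ that $\varphi(E_1) \subseteq E'_{m'}$. The case $m' = t'$ is trivial. For the inductive step, if $\varphi(E_1) \subseteq E'_{m'}$, then composing with the projection $E'_{m'} \twoheadrightarrow F'_{m'}$ gives a morphism $E_1 \to F'_{m'}$ between slope-semistable bundles (here $E_1 = F_1$ is semistable by hypothesis) with $\mu(E_1) \geq q > \mu(F'_{m'})$. By the key fact, this composition vanishes, so $\varphi(E_1) \subseteq E'_{m'-1}$. At $m' = 0$ we conclude $\varphi|_{E_1} = 0$.

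Finally, an ascending induction on $t$ finishes the argument: since $\varphi$ vanishes on $E_1$, it factors through a morphism $\overline{\varphi}: E/E_1 \to E'$, and $E/E_1$ carries the induced filtration $0 = E_1/E_1 \subsetneq E_2/E_1 \subsetneq \dots \subsetneq E_t/E_1 = E/E_1$ with the same semistable quotients $F_2, \dots, F_t$, still all of slope $\geq q$. The induction hypothesis gives $\overline{\varphi} = 0$, hence $\varphi = 0$. The only subtle point is handling the saturation in the key fact cleanly so that everything stays in the category of vector bundles on the Riemann surface $X$; this is the only nontrivial step, and it is standard because coherent subsheaves of a vector bundle on a smooth curve admit saturations that are again subbundles of the same rank with no smaller degree.
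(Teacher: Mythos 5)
Your proof is correct and takes essentially the same approach as the paper: reduce to the elementary fact that there are no nonzero maps between slope-semistable bundles when the source has strictly larger slope, then propagate this through the two filtrations by induction. The paper phrases this as a double induction on the pair of lengths $(t,t')$, peeling off $E_t$ from the top; you peel off $E_1$ from the bottom via a descending induction on $m'$ followed by an ascending induction on $t$. The two organizations are logically equivalent. Your write-up is somewhat more explicit, in particular in spelling out the saturation argument for the base case, which the paper leaves implicit; note also that on a smooth curve $\ker\varphi$ is automatically a subbundle (its quotient embeds in $E'$ and is therefore torsion-free, hence locally free), so the saturation of the kernel is unnecessary, though harmless.
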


\begin{proof}
    If $t=t'=1$, then $E$ and $E'$ are slope-semistable, the claim uses that $E$ is semistable and $\mu(E)>\mu_{\max}(E')$. We perform a double induction on $(t,t')$. Assuming the claim is true for the pairs $(t,t')$, we wish to verify the claim for the lengths $(t+1, t')$, and $(t,t'+1)$. For $(t+1, t')$, the restriction of a morphism $E\rightarrow E'$ to $E_t\rightarrow E'$ is trivial due to the induction hypothesis. Similarly, $E/E_t\rightarrow E'$ is also trivial due to the induction hypothesis, thus $E\rightarrow E'$ is trivial. The argument for $(t,t'+1)$ is analogous.
\end{proof}

Let $\xi$ be a principal $G$-bundle on a compact Riemann surface $X$, where $G$ is complex and reductive. By making the required choices, we fix a special-orthogonal vector bundle structure $(\ad(\xi),\beta,\tau)$ through Lemma \ref{lem_adjointspecialorth}. We define canonical reductions of $\xi$ using the following lemma, using that $\ad(\xi)$ is a Lie algebra bundle with fibers isomorphic to $\Lie(G)$.

\begin{lemma}\label{lem_canonreduction}
    For the special-orthogonal Harder-Narasimhan filtration of $(\ad(\xi),\beta,\tau)$ from Theorems \ref{thm_hardernarasimhanorthodd} and \ref{thm_hardernarasimhanortheven}:
    \begin{equation*}
        0=\ad(\xi)_0\subsetneq\ldots\subsetneq \ad(\xi)_t\subsetneq \ad(\xi),
    \end{equation*}
    \begin{enumerate}[label=(\roman*)]
        \item $\ad(\xi)_m$ is a nilpotent Lie algebra subbundle for all $m=1,\ldots,t$.

        \item $\ad(\xi)_t^\perp$ is a parabolic Lie algebra subbundle.
    \end{enumerate}
\end{lemma}

\begin{proof}
    We first prove that $\ad(\xi)_t^\perp$ is a Lie algebra bundle. It suffices to show that the map $\varphi:\ad(\xi)_t^\perp\otimes\ad(\xi)_t^\perp\rightarrow\ad(\xi)/\ad(\xi)_t^\perp$ induced by the Lie bracket is trivial, i.e., equal to $0$. Using Lemma \ref{lem_semistabletensor}, there exists a filtration of $\ad(\xi)_t^\perp\otimes\ad(\xi)_t^\perp$, whose quotients are semistable of slope greater or equal to $0$. Furthermore, the slopes of the quotients of the Harder-Narasimhan filtration of $\ad(\xi)_t/\ad(\xi)_t^\perp$ are strictly lesser than $0$. By Lemma \ref{lem_slopecomparisonlemma}, $\varphi$ is trivial.

    For \textit{(i)}, Lemma \ref{lem_slopecomparisonlemma} similarly shows that for $m=1,\ldots,t$, the map $\varphi:\ad(\xi)_t\otimes\ad(\xi)_m\rightarrow\ad(\xi)/\ad(\xi)_{m-1}$ induced by the Lie bracket is trivial.

    For \textit{(ii)}, note that for all $x\in X$, we have $(\ad(\xi)_t^\perp)_x\cong(\ad(\xi)_t^\perp/\ad(\xi)_t)_x\oplus(\ad(\xi)_t)_x$ as Lie algebras, where $(\ad(\xi)_t^\perp/\ad(\xi)_t)_x$ is a reductive Lie algebra. Due to \textit{(i)}, $(\ad(\xi)_t)_x$ is a nilpotent Lie algebra, and thus it is isomorphic to the nilpotent radical of $(\ad(\xi)_t^\perp)_x$.
    For all $x\in X$, we have found the Levi decomposition of $(\ad(\xi)_t^\perp)_x$, hence it is a parabolic Lie subalgebra of $\ad(\xi)_x$. Thus, the claim of \textit{(ii)} follows.
\end{proof}

Since $\ad(\xi)_t^\perp$ is a parabolic Lie algebra subbundle, we can state the definition of a canonical reduction of a principal bundle, following Atiyah and Bott in \cite[Page 589]{atiyah-bott_yangmills}.

\begin{definition}\label{can_red_def}
    A reduction $s^*\xi$ of $\xi$ to a parabolic subgroup $P$ of $G$ is called a \textit{canonical reduction} if $\ad(s^*\xi)$ is isomorphic to $\ad(\xi)_t^\perp$, where $\ad(\xi)_t$ is the largest isotropic subbundle in a special-orthogonal Harder-Narasimhan filtration of $\ad(\xi)$.
\end{definition}

The filtration in this definition exists due to Lemma \ref{lem_adjointspecialorth} and Theorems \ref{thm_hardernarasimhanorthodd} and \ref{thm_hardernarasimhanortheven}. Let us revisit Example \ref{ex_orthHNfiltration} to see how canonical reductions are related to Harder-Narasimhan filtrations.

\begin{example}\label{ex_canonicalreduction}
    In the setting of Example \ref{ex_orthHNfiltration}, with slope-semistable vector bundles $E$ and $E'$ of ranks $r$ and $r'$, such that $\mu(E)>\mu(E')$, we know that $E\oplus E'$ has the Harder-Narasimhan filtration:
    \begin{equation*}
        0\subsetneq E\subsetneq E\oplus E'.
    \end{equation*}
    Through Lemma \ref{lem_reductsubbundcorr}, this filtration defines a reduction $s$ of the frame bundle $\Fr(E\oplus E')$ to a standard maximal parabolic subgroup $P_{\{\alpha_{r,r+1}\}}$ of $\GL(r+r',\C)$. The adjoint bundle $\ad(s^*\Fr(E\oplus E'))$ is isomorphic to the endomorphisms of $E\oplus E'$ that preserve the subbundle $E$, hence, $\ad(s^*\Fr(E\oplus E'))$ is isomorphic to $(E'^*\oplus E)^\perp$ from (\ref{ex_orthHNfiltration2}). Due to the special-orthogonal Harder-Narasimhan filtration from (\ref{ex_orthHNfiltration3}), the reduction $s$ is a canonical reduction of the frame bundle $\Fr(E\oplus E')$.
\end{example}

This is an example of how Harder-Narasimhan filtrations of vector bundles induce canonical reductions of frame bundles. In \cite[Pages 589-590]{atiyah-bott_yangmills}, it is explained that Harder-Narasimhan filtrations of vector bundles always arise from canonical reductions of frame bundles.

It is fair to ask whether canonical reductions always exist in the general setting of principal $G$-bundles, where $G$ is complex and reductive.

\begin{lemma}\label{lem_canonicalreductionexist}
    For a principal $G$-bundle $\xi$, a canonical reduction $s^*\xi$ to a parabolic subgroup $P$ of $G$ exists.
\end{lemma}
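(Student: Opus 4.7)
The plan is to convert the parabolic Lie algebra subbundle $\ad(\xi)_t^\perp$ obtained in Lemma \ref{lem_canonreduction}(ii) into a section of an associated flag bundle of $\xi$, and then pull back along that section to obtain the desired $P$-bundle. Everything needed is already in place: Lemma \ref{lem_adjointspecialorth} gives the special-orthogonal structure on $\ad(\xi)$, Theorems \ref{thm_hardernarasimhanorthodd} and \ref{thm_hardernarasimhanortheven} produce its special-orthogonal Harder-Narasimhan filtration, and Lemma \ref{lem_canonreduction}(ii) tells us the fibers of $\ad(\xi)_t^\perp$ are parabolic Lie subalgebras of $\ad(\xi) \cong \Lie(G)$.

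The first step is to pin down the conjugacy type of these parabolic subalgebras. Conjugacy classes of parabolic subalgebras of $\Lie(G)$ are in bijection with subsets $I \subseteq \triangle$ of the simple roots, and in particular form a finite discrete set. The invariants distinguishing these classes (for instance the dimensions of graded pieces under $\Ad$) are locally constant on $X$; since $X$ is connected, they must be constant. Fix a standard parabolic $P = P_I \subseteq G$ such that $\Lie(P)$ is conjugate in $G$ to each fiber $(\ad(\xi)_t^\perp)_x$.

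Next, I will identify $G/P$ with the variety of parabolic subalgebras of $\Lie(G)$ conjugate to $\Lie(P)$ via $gP \mapsto \Ad(g)(\Lie(P))$. Forming the associated bundle then yields
\[
    \xi/P \;\cong\; \xi \times_G G/P,
\]
whose fiber over $x$ is the variety of parabolic subalgebras of $\ad(\xi)_x$ in the fixed conjugacy class. The holomorphic subbundle $\ad(\xi)_t^\perp \subseteq \ad(\xi)$ picks out such a subalgebra fiberwise and thus defines a holomorphic section $s : X \to \xi/P$, where holomorphicity follows at once from that of $\ad(\xi)_t^\perp$.

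Finally, pulling back yields a principal-$P$-bundle $s^*\xi$ whose fiber at $x$ consists of those frames $\C^{\dim G} \to \ad(\xi)_x$ carrying $\Lie(P)$ to $(\ad(\xi)_t^\perp)_x$; applying the adjoint representation recovers $\ad(s^*\xi) \cong \ad(\xi)_t^\perp$, so $s^*\xi$ is a canonical reduction in the sense of Definition \ref{can_red_def}. I expect the main subtlety to be the globalization step: checking that the parabolic subalgebras $(\ad(\xi)_t^\perp)_x$ vary in a single $G$-conjugacy class and that the resulting section of $\xi/P$ is holomorphic rather than merely set-theoretic. Both reduce to the observation that $\ad(\xi)_t^\perp$ is a holomorphic subbundle of constant rank with Lie-algebraic structure preserved under transition functions, together with the discreteness of conjugacy classes of parabolic subalgebras in a fixed reductive Lie algebra.
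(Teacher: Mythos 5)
Your approach is correct and, in my view, cleaner than the paper's. You directly identify $G/P$ with the $G$-orbit of $\Lie(P)$ inside the Grassmannian of subspaces of $\Lie(G)$, observe that $\xi\times_G G/P\cong\xi/P$ has fiber over $x$ the variety of parabolic subalgebras of $\ad(\xi)_x$ of the right conjugacy type, and then read off the section $s$ directly from the subbundle $\ad(\xi)_t^\perp$. The paper instead passes through the frame bundle of $\ad(\xi)$: it fixes an embedding $\iota:G\hookrightarrow\GL(r,\C)$, views $\ad(\xi)$ as a rank-$r^2$ vector bundle, uses Lemma~\ref{lem_reductsubbundcorr} to get a reduction $s':X\rightarrow\Fr(\ad(\xi))/P'$ to a maximal parabolic of $\GL(\C^{r^2})$, and then translates this, via non-canonical fiberwise identifications $\Fr(\ad(\xi))_x/P'\cong\GL(\C^{r^2})/P'$ and the exponential map, back into an element of $G/P$. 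Your route avoids the auxiliary linear group entirely and makes the $G$-equivariance (hence frame-independence) of the identification transparent. The only soft spot is your claim that \emph{``the dimensions of graded pieces under $\Ad$''} distinguish conjugacy classes of parabolic subalgebras. This is not quite a complete set of invariants (for $D_n$ the two half-spin maximal parabolics have identical graded dimensions but are not $\SO(2n,\C)$-conjugate). What you actually want is: the variety of all parabolic subalgebras of $\Lie(G)$ is a finite disjoint union of $\Ad(G)$-orbits, each orbit being $\cong G/P_I$ for some $I\subseteq\triangle$, hence closed, connected, and therefore clopen in that union; a continuous map from the connected $X$ (constructed by trivializing $\ad(\xi)$ locally and using that the transition functions act by $\Ad$, hence preserve each orbit) must land in a single orbit. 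With this replacement, the argument is complete; the paper itself simply asserts without comment that the fibers are all isomorphic to one $\Lie(P)$, so you are being more careful than the source on this point.
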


\begin{proof}
    Since $\ad(\xi)_t^\perp$ is a parabolic Lie algebra subbundle of $\ad(\xi)$, due to Lemma \ref{lem_canonreduction}, its fibers are isomorphic to a parabolic Lie subalgebra of $\Lie(G)$. Through the exponential map, the image of this subalgebra generates a parabolic subgroup $P$ of $G$, such that the fibers of $\ad(\xi)_t^\perp$ are isomorphic to $\Lie(P)$. We fix an embedding $\iota:G\hookrightarrow\GL(r,\C)$ from \cite[I. 1.10 Proposition]{borel_linearalgebraicgroups}, which induces an embedding $\GL(\Lie(G))\hookrightarrow\GL(\Lie(\GL(r,\C)))\cong\GL(\C^{r^2})$, to view $\ad(\xi)$ as a vector bundle of rank $r^2$.
    By taking the frame bundle of $\ad(\xi)$, Lemma \ref{lem_reductsubbundcorr} recasts the subbundle $\ad(\xi)_t^\perp$ as a section $s':X\rightarrow\Fr(\ad(\xi))/P'$, where $P'$ is the standard maximal parabolic subgroup of $\GL(\C^{r^2})$ that stabilizes $D\iota(\Lie(P))\subseteq\C^{r^2}$, where $D\iota$ is the derivative of $\iota$. For every $x\in X$, the corresponding fiber $\Fr(\ad(\xi))_x/P'$ is a homogeneous space isomorphic to $\GL(\C^{r^2})/P'$, we fix such an isomorphism (which is not unique). Every element of $\GL(\C^{r^2})/P'$ corresponds to a linear map $D\iota(\Lie(P))\rightarrow\C^{r^2}/D\iota(\Lie(P))$. Since $s'$ is a reduction of $\Fr(\ad(\xi))$, it maps $x\in X$ to an element corresponding to a linear map $D\iota(\Lie(P))\rightarrow D\iota(\Lie(G))/D\iota(\Lie(P))$. Through the exponential map, the image of this linear map is an element of $G/P$. A choice of isomorphism $\Fr(\ad(\xi))_x/P'\cong\GL(\C^{r^2})/P'$ also determines an isomorphism $\xi_x/P\cong G/P$ through $\iota$, using that the kernel of $\Ad:G\rightarrow\GL(\Lie(G))$ lies inside $P$. Since the construction of an element in $G/P$ above is compatible with local trivializations and gluing, we have found a reduction $s:X\rightarrow\xi/P$. This reduction is canonical by construction.
\end{proof}

The next natural question to ask is whether canonical reductions are unique. The choices made in giving $\ad(\xi)$ a special-orthogonal vector bundle structure in Lemma \ref{lem_adjointspecialorth}, which also affect the construction of a canonical reduction in Lemma \ref{lem_canonicalreductionexist}, could lead to different canonical reductions. For this, the following lemma from \cite[Proposition 10.4]{atiyah-bott_yangmills} is useful.

\begin{lemma}
    For a canonical reduction $s^*\xi$ of a principal $G$-bundle $\xi$, and a homomorphism of complex reductive groups $G\rightarrow G'$ that maps $\Rad(G)$ into $\Rad(G')$, then the extension $(s^*\xi)(G')$ is a canonical reduction of $\xi(G')$.
\end{lemma}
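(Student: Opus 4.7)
The plan is to verify canonicity of $(s^*\xi)(G')$ directly from Definition \ref{can_red_def}, by identifying the special-orthogonal Harder-Narasimhan filtration of $\ad(\xi(G'))$ in terms of the one on $\ad(\xi)$. Let $P \subseteq G$ be the parabolic to which $s^*\xi$ reduces $\xi$. Following the construction in Lemma \ref{lem_derived}, using $G' = [G',G']\Rad(G')$ and the hypothesis $\varphi(\Rad(G)) \subseteq \Rad(G')$, I would define $P' \subseteq G'$ as the parabolic satisfying $\Lie(P') = d\varphi(\Lie(P)) + \Lie(\Rad(G'))$; then $(s^*\xi)(G')$ is a $P'$-reduction of $\xi(G')$ by construction.

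The derivative $d\varphi \colon \Lie(G) \to \Lie(G')$ is $\Ad$-equivariant via $\varphi$, so it descends to a morphism of vector bundles $\Phi \colon \ad(\xi) \to \ad(\xi(G'))$ on $X$. The strategy is to push the special-orthogonal HN filtration $0 = \ad(\xi)_0 \subsetneq \cdots \subsetneq \ad(\xi)_t \subsetneq \ad(\xi)$ forward through $\Phi$, and augment the slope-$0$ middle piece $\ad(\xi)_t^\perp / \ad(\xi)_t$ with both $\ker(\Phi)$ (quotiented by its intersection with $\ad(\xi)_t$) and the central directions of $\Lie(G')$ missed by $d\varphi$. Semistability of the graded pieces is preserved because $\varphi$ sends $\Rad(G)$ into $\Rad(G')$: Theorem \ref{thm_instflag}, applied to the frame bundles of the semistable quotients (which are frames of semistable vector bundles by Theorem \ref{thm_ramversusslope}), together with Lemma \ref{lem_adbundle}, ensures the extended pieces remain slope-semistable with the same slopes, hence positive and strictly decreasing above the middle piece.

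The resulting filtration of $\ad(\xi(G'))$ satisfies the hypotheses of Theorem \ref{thm_hardernarasimhanorthodd} or \ref{thm_hardernarasimhanortheven} (depending on the parity of the rank of $\ad(\xi(G'))$), so by uniqueness it is the special-orthogonal HN filtration of $\ad(\xi(G'))$. Its terminal isotropic term is $\Phi(\ad(\xi)_t)$, whose orthogonal complement with respect to the adjoint-invariant form on $\Lie(G')$ used to endow $\ad(\xi(G'))$ with its special-orthogonal structure equals $\ad((s^*\xi)(G'))$, matching the parabolic $P'$ defined above. This is precisely the canonicity condition. The main obstacle is the careful treatment of $\ker(\Phi)$: one must verify it sits inside the slope-$0$ piece $\ad(\xi)_t^\perp/\ad(\xi)_t$ (which follows because $\ker(d\varphi)$ is an $\Ad(G)$-invariant Lie ideal, so its induced subbundle is slope-semistable of slope $0$ by Lemma \ref{lem_adbundle}), and one must check that isotropy transfers from the pairing on $\Lie(G)$ to the pairing on $\Lie(G')$, which amounts to the compatibility of the two adjoint-invariant forms under $d\varphi$.
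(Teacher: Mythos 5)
The paper itself gives no proof of this lemma; it is imported directly from \cite[Proposition 10.4]{atiyah-bott_yangmills}, so your argument has to stand on its own, and as written it has genuine gaps. The most serious one is structural: your construction only has a chance when $\varphi$ is surjective up to centre. The subalgebra $D\varphi(\Lie(P))+\Lie(\Rad(G'))$ is in general not parabolic in $\Lie(G')$ --- for $\Sp(2n,\C)\hookrightarrow\GL(2n,\C)$ with $n\geq 2$ (which satisfies the radical hypothesis and is exactly the case the paper needs in order to explain Remark \ref{rem_hncompare}), the image of any proper parabolic of $\Sp(2n,\C)$ together with the scalars has dimension strictly smaller than a Borel subgroup of $\GL(2n,\C)$ --- so ``$(s^*\xi)(G')$ is a $P'$-reduction by construction'' already fails; the correct $P'$ is the strictly larger parabolic cut out by the nonnegative part of the induced filtration, and identifying it is essentially the content of the lemma rather than something one can posit at the outset. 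For the same reason, the chain you assemble inside $\ad(\xi(G'))$ from the images $\Phi(\ad(\xi)_m)$, the kernel of $\Phi$, and the central directions of $\Lie(G')$ cannot exhaust $\ad(\xi(G'))$ when $\varphi$ is not surjective modulo centre: for $\Sp(2n,\C)\hookrightarrow\GL(2n,\C)$, $\mathrm{im}(\Phi)$ plus scalars has rank $n(2n+1)+1<4n^2$. What you produce is therefore not a filtration of $\ad(\xi(G'))$ of the shape demanded by Theorems \ref{thm_hardernarasimhanorthodd} and \ref{thm_hardernarasimhanortheven}, and the uniqueness statement you want to invoke never comes into play.

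Second, the auxiliary claim about $\ker(\Phi)$ is false, and Lemma \ref{lem_adbundle} does not support it: that lemma converts Ramanathan-semistability of a bundle into slope-semistability of its adjoint bundle, but $\xi$ is not assumed semistable here. Concretely, take $G=G_1\times G_2$, let $\varphi$ be the projection onto $G_2$ (which maps $\Rad(G)$ onto $\Rad(G_2)$), and let $\xi=\xi_1\times_X\xi_2$ with $\xi_1$ unstable: the subbundle induced by $\ker(D\varphi)$ is $\ad(\xi_1)$, which is neither slope-semistable nor contained in $\ad(\xi)_t^\perp$. Similarly, ``the extended pieces remain slope-semistable with the same slopes'' is unjustified: the graded pieces of the correct induced filtration are bundles associated to the Levi extension $(s^*\xi)(L_I)$ through the weight spaces of $\Lie(G')$, and their slopes are pairings of the Harder-Narasimhan type with the $G'$-weights, not the slopes of the quotients for $G$. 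Finally, the ``compatibility of the two adjoint-invariant forms under $D\varphi$'' that you defer is not a routine check: when $D\varphi$ has a kernel, the pullback of the nondegenerate form on $\Lie(G')$ is degenerate, so orthogonal complements do not transport through $\Phi$ in the way your last step needs. A workable route, closer to \cite{atiyah-bott_yangmills} and \cite{biswas-holla_harder-narasimhanreductionofprincbund}, is to write $\ad(\xi(G'))=s^*\xi\times_P\Lie(G')$ and filter $\Lie(G')$ as a $P$-module by the Harder-Narasimhan cocharacter, or equivalently to verify conditions \ref{bh1} and \ref{bh2} (or \textit{(i)}--\textit{(ii)} of Theorem \ref{thm_hntype}) for the induced reduction, using Theorem \ref{thm_instflag} for semistability of the Levi extension and the functoriality of types from Remark \ref{rem_obstructionfunctorial} for the positivity of the relevant degrees.
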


This lemma explains the phenomenon we saw in Remark \ref{rem_hncompare}, but also that with the appropriate extensions of structure groups, the adjoint bundles $\ad(\xi)^\perp_t$ of canonical reductions are unique up to isomorphism. The uniqueness of canonical reductions then hinges upon how two reductions are related when their adjoint bundles are isomorphic, we claim that these reductions are \textit{conjugate} to each other, a notion we now make precise. We recall the following result from \cite[Lemma 2.2]{biswas-holla_harder-narasimhanreductionofprincbund}.

\begin{lemma}\label{lem_adequivalentreduction}
    Let $P$ and $P'$ be parabolic subgroups of $G$, let $s^*\xi$ be a reduction of $\xi$ to $P$, and let $s'^*\xi$ be a reduction of $\xi$ to $P'$.
    If there exists an isomorphism $\varphi:\ad(s^*\xi)\rightarrow\ad(s'^*\xi)$, then $P$ and $P'$ are conjugate, so there exists $g\in G$, such that $\conj_g:G\rightarrow G$, $h\mapsto ghg^{-1}$ fulfills $\conj_g(P)=P'$.

    Through $\conj_g$, we induce an extension $(s^*\xi)(P')$ of $s^*\xi$ to $P'$, and there exists an isomorphism $\psi:(s^*\xi)(P')\rightarrow(s'^*\xi)$ of principal $P'$-bundles.
\end{lemma}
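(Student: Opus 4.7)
The plan is to reduce the question to a pointwise statement about parabolic subalgebras of $\Lie(G)$, exploiting the fact that a parabolic subgroup of a complex reductive group is the normalizer of its own Lie algebra, i.e.\ $N_G(\Lie(P))=P$ for every parabolic subgroup $P\subseteq G$. This self-normalization property transports global information about adjoint subbundles to rigid algebraic data at a single fiber and underpins both assertions.

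I would first interpret the isomorphism $\varphi:\ad(s^*\xi)\to\ad(s'^*\xi)$ as identifying both adjoint bundles with a common Lie algebra subbundle of $\ad(\xi)$; this interpretation is consistent with the intended use of the lemma in the context of canonical reductions, where both $\ad(s^*\xi)$ and $\ad(s'^*\xi)$ coincide with $\ad(\xi)_t^\perp$. Fix a point $x\in X$ and a trivialization $\xi_x\cong G$, which induces $\ad(\xi)_x\cong\Lie(G)$. Under this identification the subalgebras $\ad(s^*\xi)_x$ and $\ad(s'^*\xi)_x$ take the form $\Ad(h_1)\Lie(P)$ and $\Ad(h_2)\Lie(P')$ for some $h_1,h_2\in G$. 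Setting $g:=h_2^{-1}h_1$, the equality $\Ad(h_1)\Lie(P)=\Ad(h_2)\Lie(P')$ becomes $\Ad(g)\Lie(P)=\Lie(P')$. Since $gPg^{-1}$ is a parabolic subgroup with Lie algebra $\Lie(P')$, and $P'=N_G(\Lie(P'))$ contains any such parabolic, we conclude $gPg^{-1}=P'$.

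For the second assertion, construct the extension $Q:=(s^*\xi)(P')$ through the homomorphism $\conj_g:P\to P'$, and embed $Q$ as a $P'$-subbundle of $\xi$ via the well-defined $P'$-equivariant map $[y,p']\mapsto y g^{-1}p'$. A direct fiberwise computation, using $\Ad(g^{-1})\Lie(P')=\Lie(P)$, shows that $\ad(Q)=\ad(s^*\xi)$ as subbundles of $\ad(\xi)$, and hence $\ad(Q)=\ad(s'^*\xi)$. Both $Q$ and $s'^*\xi$ are now $P'$-reductions of $\xi$ sharing the same adjoint subbundle; fiberwise, two $P'$-cosets in $G$ yielding the same $\Ad$-conjugate of $\Lie(P')$ differ by an element of $N_G(\Lie(P'))=P'$, so they must be equal. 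Therefore $Q=s'^*\xi$ as $P'$-subbundles of $\xi$, and the tautological identification provides the desired isomorphism $\psi:Q\to s'^*\xi$.

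The principal obstacle is the interpretation of $\varphi$: the lemma is false for arbitrary abstract Lie algebra bundle isomorphisms, as one sees by considering the two non-$\GL(3,\C)$-conjugate maximal parabolics of $\GL(3,\C)$, whose Lie algebras are isomorphic as abstract Lie algebras but lie in different conjugacy classes (they are exchanged by the outer involution $A\mapsto(A^{-1})^T$). The argument only goes through once $\varphi$ is taken to identify the two adjoint bundles as subbundles of $\ad(\xi)$; once this is fixed, the remainder is a routine combination of fiberwise analysis and the normalizer property.
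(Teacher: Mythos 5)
Your proposal is correct, and it does something the paper's own proof does not: it identifies that the hypothesis, read literally as ``there exists some isomorphism of Lie algebra bundles $\varphi:\ad(s^*\xi)\to\ad(s'^*\xi)$,'' is too weak. The paper's proof opens with the claim that an abstract isomorphism $\Lie(P)\cong\Lie(P')$ forces $P$ and $P'$ to be conjugate in $G$, which is false — as you observe, the two standard maximal parabolics of $\GL(3,\C)$ have abstractly isomorphic Lie algebras (exchanged by $X\mapsto -K_3X^TK_3$) while lying in distinct conjugacy classes. The reading that makes the lemma true, and that matches the setting of \cite[Lemma 2.2]{biswas-holla_harder-narasimhanreductionofprincbund} as well as the only place the lemma is applied in this paper (where both $\ad(s^*\xi)$ and $\ad(s'^*\xi)$ are the same subbundle $\ad(\xi)_t^\perp\subseteq\ad(\xi)$), is the one you adopt: $\varphi$ should be the identification of the two adjoint subbundles inside $\ad(\xi)$.

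On top of repairing the hypothesis, your argument for the second assertion is genuinely different from the paper's. The paper attempts to read off a map $\eta_x:s(x)\to s'(x)$ directly from the formula $[v,Y]\mapsto[\eta_x(v),\Ad(g)(Y)]$, which implicitly presupposes that $\varphi$ already takes this special conjugation form; no justification is given for that. You instead build the extension $Q=(s^*\xi)(P')$ explicitly, embed it into $\xi$ via $[y,p']\mapsto yg^{-1}p'$, verify $\ad(Q)=\ad(s^*\xi)=\ad(s'^*\xi)$ as subbundles of $\ad(\xi)$, and then invoke the self-normalizing property $N_G(\Lie(P'))=P'$ to conclude that two $P'$-reductions sharing the same adjoint subbundle must be equal. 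This replaces the paper's handwaving by the clean rigidity statement that the map $G/P'\to\{\text{parabolic subalgebras conjugate to }\Lie(P')\}$, $gP'\mapsto\Ad(g)\Lie(P')$, is a bijection. The one point worth spelling out (you sketch it correctly) is that while your $g$ is produced from a single fiber $x_0$, the equality $\ad(Q)=\ad(s^*\xi)$ holds at \emph{every} fiber because $[yg^{-1}p',Y']=[y,\Ad(g^{-1}p')Y']$ and $\Ad(g^{-1})\Lie(P')=\Lie(P)$, so the subsequent fiberwise comparison with $s'^*\xi$ is uniform in $x$. Overall your route is more careful than the one in the paper and reaches the same conclusion.
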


The two reductions $s^*\xi$ and $s'^*\xi$ are said to be \textit{conjugate} to each other.

\begin{proof}
    Given $\varphi:\ad(s^*\xi)\rightarrow\ad(s'^*\xi)$, the fibers of the adjoint bundles are isomorphic, implying that the Lie algebras $\Lie(P)$ and $\Lie(P')$ are isomorphic. Thus, as parabolic subgroups of $G$, $P$ and $P'$ are conjugate to each other. Let $g\in G$ such that $\conj_g(P)=P'$. For all $x\in X$, $\varphi$ restricts to an isomorphism on the fibers at $x$:
    \begin{align*}
        \ad(s^*\xi)_x= P\backslash(s(x)\times\Lie(P)) & \rightarrow \ad(s'^*\xi)_x=P'\backslash(s'(x)\times\Lie(P')), \\
        [v,Y]                                         & \mapsto [\eta_x(v),\Ad(g)(Y)],
    \end{align*}
    through which we induce a map $\eta_x:s(x)\rightarrow s'(x)$ that induces an isomorphism $\eta:s^*\xi\rightarrow s'^*\xi$ of fiber bundles. To ensure that $\eta$ induces an isomorphism $\psi:(s^*\xi)(P')\rightarrow(s'^*\xi)$ of principal bundles, we need to verify that for all $x\in X$, $\eta_x$ is $P'$-equivariant with respect to the conjugation $\conj_g$. For all $v\in s(x)$ and all $p\in P$, we have:
    \begin{align*}
        [\eta_x(vp),\Ad(g)(Y)] & =[\eta_x(v),\Ad(gp^{-1}g^{-1}g)(Y)] \\
                               & =[\eta_x(v)gpg^{-1},\Ad(g)(Y)].
    \end{align*}
    Thus, the claim follows.
\end{proof}

Due to Lemma \ref{lem_adequivalentreduction}, two canonical reductions $s^*\xi$ and $s'^*\xi$ of $\xi$ to $P$ and $P'$ are conjugate in the sense that there exists an isomorphism $\psi:(s^*\xi)(P')\rightarrow(s'^*\xi)$ of principal $P'$-bundles. To recap, Harder-Narasimhan filtrations of vector bundles, as well as symplectic and special-orthogonal variants, generalize nicely to canonical reductions of principal $G$-bundles, where $G$ is complex and reductive.

\subsection{Canonical reductions in the sense of Biswas and Holla}\label{subsec_bh}

The approach of Biswas and Holla in \cite{biswas-holla_harder-narasimhanreductionofprincbund} to constructing canonical reductions is slightly different, using the notion of \textit{dominant characters} of parabolic subgroups $P$ of $G$, as discussed first by Ramanathan in \cite{ramanathan_stableprincipalbundles}. In that setting, a canonical reduction $s^*\xi$ of $\xi$ to a parabolic subgroup $P$ of $G$ has two key properties:
\begin{enumerate}[label=(BH-\roman*)]
    \item\label{bh1} The extension of $s^*\xi$ to the Levi-factor $L$ of $P$ is Ramanathan-semistable.

    \item\label{bh2} The degree of the extension of $s^*\xi$ with respect to dominant characters $P$ of $G$ is larger than $0$.
\end{enumerate}
More precisely, Biswas and Holla prove the following theorem, for which we will sketch a proof.
\begin{theorem}\label{thm_bh}
    A reduction $s^*\xi$ of $\xi$ is a canonical reduction if and only if it fulfills conditions \ref{bh1} and \ref{bh2}.
\end{theorem}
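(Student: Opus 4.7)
The plan is to prove the equivalence through the adjoint bundle $\ad(s^*\xi)$ and its relation to the special-orthogonal Harder-Narasimhan filtration of $\ad(\xi)$. The main bridge is Lemma \ref{lem_canonreduction}: for a canonical reduction $s^*\xi$, we have $\ad(s^*\xi)\cong\ad(\xi)_t^\perp$, and this parabolic Lie algebra subbundle comes with a canonical Levi decomposition $\ad(\xi)_t^\perp/\ad(\xi)_t \oplus \ad(\xi)_t$, where $\ad(\xi)_t$ plays the role of the nilpotent radical. This decomposition will let us read off both (BH-i) and (BH-ii) from the Harder-Narasimhan data, and conversely.

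For the forward implication, assume $s^*\xi$ is canonical in the sense of Definition \ref{can_red_def}. Let $L$ be the Levi factor of the parabolic $P$ to which $s^*\xi$ reduces; the extension $(s^*\xi)(L)$ has adjoint bundle isomorphic to $\ad(\xi)_t^\perp/\ad(\xi)_t$, which is slope-semistable by condition \textit{(iii)} of Theorems \ref{thm_hardernarasimhanorthodd} and \ref{thm_hardernarasimhanortheven}. By Lemma \ref{lem_adbundle}, this yields (BH-i). For (BH-ii), I would express each dominant character $\chi$ of $P$ as a non-negative integral combination of the fundamental weights dual to the simple roots outside the Levi, so that $\deg(s^*\xi\times^P\C_\chi)$ is a non-negative combination of the degrees attached to the successive quotients of a filtration of $\ad(\xi)_t$ by $L$-weight spaces. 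Those degrees are controlled by the slopes $\mu(F_1)>\cdots>\mu(F_t)>0$ coming from Theorems \ref{thm_hardernarasimhanorthodd}/\ref{thm_hardernarasimhanortheven}, giving the required positivity.

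For the reverse implication, assume $s^*\xi$ satisfies (BH-i) and (BH-ii). I would first use (BH-i) together with Lemma \ref{lem_adbundle} to conclude that $\ad((s^*\xi)(L))\cong \ad(s^*\xi)/\mathfrak{n}(s^*\xi)$ is slope-semistable, where $\mathfrak{n}(s^*\xi)$ denotes the nilpotent-radical subbundle of $\ad(s^*\xi)$. Then I would build a filtration of $\mathfrak{n}(s^*\xi)$ by $L$-stable pieces indexed by heights with respect to the simple roots not in the Levi; semistability of each graded piece follows from (BH-i) and Lemma \ref{lem_semistabletensor} applied to weight-space tensor products, and strict decrease of slopes follows from (BH-ii) applied to the dominant characters picking out individual simple roots. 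This exhibits $\ad(s^*\xi)\subseteq \ad(\xi)$ as the co-isotropic part $\ad(\xi)_t^\perp$ of a filtration of $\ad(\xi)$ satisfying the axioms of a special-orthogonal Harder-Narasimhan filtration; uniqueness in Theorems \ref{thm_hardernarasimhanorthodd} and \ref{thm_hardernarasimhanortheven} then forces this filtration to coincide with the special-orthogonal HN filtration, so $s^*\xi$ is canonical in the sense of Definition \ref{can_red_def}.

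The main obstacle I expect is the bookkeeping in the second paragraph of each direction: correctly translating dominant characters of $P$ into degrees of line subquotients of $\ad(\xi)_t$, and, in the reverse direction, verifying that the $L$-weight filtration one writes down really produces semistable quotients with strictly decreasing slopes rather than a coarser filtration. Once one commits to expressing characters via the basis of fundamental weights dual to the simple roots outside the Levi (and invokes Lemma \ref{lem_slopecomparisonlemma} together with Lemma \ref{lem_semistabletensor} at the appropriate steps, as in the proof of Lemma \ref{lem_canonreduction}), the two conditions (BH-i) and (BH-ii) match up term by term with conditions \textit{(i)}--\textit{(iii)} of the HN filtration, completing the equivalence.
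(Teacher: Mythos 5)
Your proposal takes a genuinely different route from the paper. The paper proves only one direction and gets the other for free: since canonical reductions in the sense of Definition \ref{can_red_def} exist and are unique up to conjugation (Lemma \ref{lem_canonicalreductionexist} with Lemma \ref{lem_adequivalentreduction}), and reductions satisfying \ref{bh1}, \ref{bh2} likewise exist and are unique up to conjugation (Lemma \ref{thm_canonreduct2} and Theorem \ref{thm_canonreductunique}), it suffices to show a canonical reduction satisfies \ref{bh1} and \ref{bh2}. The paper then further reduces \emph{that} to the statement that the canonical reduction attains $\deg_{\max}(\xi)$ with $P_I$ maximal in $\cP$, which it argues is immediate from $\ad(s^*\xi)\cong\ad(\xi)_t^\perp$ together with Lemma \ref{thm_canonreduct2}. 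You instead attempt both implications directly by matching the Biswas--Holla data against the special-orthogonal HN filtration of $\ad(\xi)$ term by term.

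Your forward direction is plausible and essentially a generalization of the paper's Lemma \ref{lem_hnreduct} (which is only worked out for $G=\GL(r,\C)$) to arbitrary $G$; the identification $\ad((s^*\xi)(L))\cong\ad(\xi)_t^\perp/\ad(\xi)_t$ plus Lemma \ref{lem_adbundle} correctly delivers \ref{bh1}, and your plan for \ref{bh2} via characters picking out simple roots outside the Levi parallels Lemma \ref{lem_canonreduct}. What you gain is an explicit dictionary between the two pictures; what you lose is brevity, and the bookkeeping you flag as the main obstacle really is substantial. The bigger issue is your reverse direction: it carries a heavier burden than you acknowledge. To invoke uniqueness in Theorems \ref{thm_hardernarasimhanorthodd}/\ref{thm_hardernarasimhanortheven} you must verify that your $L$-weight filtration of $\mathfrak{n}(s^*\xi)$, extended to a filtration of $\ad(\xi)$, is genuinely a filtration by \emph{isotropic} subbundles whose successive quotients are slope-semistable with strictly decreasing positive slopes and whose middle piece is semistable — none of this is automatic, and you would also have to rule out the rank $n-1$ pathology of Theorem \ref{thm_hardernarasimhanortheven}. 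The cleanest fix is the one the paper uses: observe that Theorem \ref{thm_canonreductunique} makes the reverse implication redundant once the forward one is established, so that all your effort can go into the forward direction alone.
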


The first condition generalizes quotients in the Harder-Narasimhan filtration being slope-semistable, whilst the second condition generalizes the decreasing slopes of the quotients in the Harder-Narasimhan filtration. To understand \ref{bh2}, we define dominant characters.
\begin{definition}\label{def_domachar}
    Let $P$ be a parabolic subgroup of $G$. A nontrivial character $\chi:P\rightarrow\C^\times$ is a \textit{dominant character} if there exists a Cartan $T$ and Borel subgroup $B$, with $T\subseteq B\subseteq G$, inducing simple roots $\triangle$, such that the derivative $D\chi|_{\Lie(T)}:\Lie(T)\rightarrow\C$ is a nonnegative integer linear combination of simple roots in $\triangle$.
\end{definition}

\begin{example}\label{ex_character4}
    Let $P_I$ be the standard parabolic subgroup of $\GL(4,\C)$ corresponding to $I=\{\alpha_{2,3}\}\subset\triangle$, using the notation of (\ref{eq_simpleroots}). The character:
    \begin{equation*}
        \chi_2:P_I\rightarrow\C^\times,\quad\begin{pmatrix}
            A & B \\
            0 & D \\
        \end{pmatrix}\mapsto\det(A)/\det(D),
    \end{equation*}
    is a dominant character, since $D\chi_2|_{\Lie(T)}=\alpha_{1,2}+2\alpha_{2,3}+\alpha_{3,4}$.
\end{example}

Biswas and Holla's proof approach to Theorem \ref{thm_bh} is as follows. First, they verify the existence of reductions fulfilling \ref{bh1} and \ref{bh2}, and prove that they are unique up to conjugation. Then, by comparing \ref{bh1} and \ref{bh2} to the conditions for canonical reductions, Biswas and Holla find them equivalent. We now verify that in the vector bundle case, where $G=\GL(r,\C)$, the conditions \ref{bh1} and \ref{bh2} are equivalent to canonical reductions, providing an idea of how to solve the general case.

\begin{lemma}\label{lem_hnreduct}
    Let $E$ be a vector bundle of rank $r$. A reduction $s^*\Fr(E)$ of $\Fr(E)$ is a canonical reduction in the sense of Definition \ref{can_red_def} if and only if it fulfills conditions \ref{bh1} and \ref{bh2}.
\end{lemma}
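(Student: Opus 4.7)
The plan is to translate both conditions \ref{bh1} and \ref{bh2} into slope-theoretic statements about the filtration of $E$ associated with $s^*\Fr(E)$, and then observe that the resulting statements are precisely the defining properties of the Harder-Narasimhan filtration. By Remark \ref{rem_stabsstprincbund} I may assume $P = P_I$ is a standard parabolic of $\GL(r,\C)$, and by a straightforward extension of Lemma \ref{lem_reductsubbundcorr} from maximal to arbitrary standard parabolics, $s^*\Fr(E)$ corresponds to a filtration
\begin{equation*}
    0 = E_0 \subsetneq E_1 \subsetneq \cdots \subsetneq E_t = E
\end{equation*}
with quotients $F_m := E_m/E_{m-1}$ of rank $r_m$. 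The Levi factor of $P_I$ is $L \cong \prod_{m=1}^{t} \GL(r_m,\C)$.

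For condition \ref{bh1}, the projection $P_I \twoheadrightarrow L$ sends the block-upper-triangular cocycles of $s^*\Fr(E)$ from Remark \ref{rem_subbundcocy} to their diagonal blocks, so the extension of $s^*\Fr(E)$ to $L$ is canonically the fiber product $\prod_{m} \Fr(F_m)$. Since the parabolic subgroups of a product of reductive groups are products of parabolic subgroups of the factors, Ramanathan-semistability of this $L$-bundle is equivalent to Ramanathan-semistability of each $\Fr(F_m)$ separately, which by Theorem \ref{thm_ramversusslope} translates to slope-semistability of each $F_m$.

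For condition \ref{bh2}, I would parameterize characters of $P_I$ as $\chi_a : \mathrm{diag}(A_1,\ldots,A_t) \mapsto \prod_m \det(A_m)^{a_m}$. Applying $\chi_a$ to the cocycles of $s^*\Fr(E)$ produces a line bundle of degree $\sum_m a_m \deg(F_m)$. A direct computation expressing $D\chi_a|_{\Lie(T)}$ as $\sum_j c_j \alpha_{j,j+1}$ shows that dominance forces the center condition $\sum_m r_m a_m = 0$ together with the $t-1$ partial-sum inequalities $c_{L_k} = \sum_{i\leq k} r_i a_i \geq 0$ for $k = 1,\ldots,t-1$, cutting out a simplicial cone of dimension $t-1$; its extremal rays are represented by the characters $\chi^{(m)}$ with $a_m = r_{m+1}$ and $a_{m+1} = -r_m$ (other $a_i = 0$), for $m = 1,\ldots,t-1$. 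The $\chi^{(m)}$-degree is $r_{m+1}\deg(F_m) - r_m\deg(F_{m+1})$, and strict positivity there is exactly $\mu(F_m) > \mu(F_{m+1})$; since positivity throughout the dominant cone is equivalent to positivity on its extremal rays, condition \ref{bh2} is equivalent to the strict chain $\mu(F_1) > \cdots > \mu(F_t)$.

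Combining the two translations, \ref{bh1} and \ref{bh2} together state that the filtration $(E_m)$ has slope-semistable quotients with strictly decreasing slopes, which by the uniqueness part of Theorem \ref{thm_hardernarasimhan} identifies it with the Harder-Narasimhan filtration of $E$. By Example \ref{ex_canonicalreduction} and the subsequent discussion, the reduction of $\Fr(E)$ induced by this filtration is precisely a canonical reduction in the sense of Definition \ref{can_red_def}, and conversely every canonical reduction arises this way. The main obstacle is the root-combinatorial identification of the extremal rays of the dominant cone of $P_I$; this is manageable via the partial-sum parameterization above, but it is the one step of the argument that is not purely formal.
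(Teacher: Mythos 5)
Your proposal is correct and follows essentially the same route as the paper: reduce to a standard parabolic, identify the reduction with a filtration of $E$ via the extension of Lemma \ref{lem_reductsubbundcorr}, translate \ref{bh1} through the Levi factor $\prod_m \GL(r_m,\C)$ and Theorem \ref{thm_ramversusslope}, translate \ref{bh2} into the slope inequalities via degrees of block-determinant characters, and invoke the Atiyah--Bott correspondence between Harder--Narasimhan filtrations and canonical reductions. Your treatment is in fact slightly more careful than the paper's on two points: you work out the dominant cone precisely (your extremal characters $\det(A_m)^{r_{m+1}}\det(A_{m+1})^{-r_m}$ are the correct generators, matching the degree computation the paper actually performs, whereas its verbal description ``$\det$ of the $m$-th block over $\det$ of the $(m{+}1)$-st'' is only dominant when $r_m=r_{m+1}$), and you make the converse direction explicit via uniqueness of the Harder--Narasimhan filtration, which the paper leaves implicit.
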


\begin{proof}
    Due to Lemma \ref{lem_adequivalentreduction}, and the fact that conditions \ref{bh1} and \ref{bh2} are invariant under conjugation of $P$, we can assume without loss of generality that $P=P_I$ is a standard parabolic subgroup of $\GL(r,\C)$. From \cite[Pages 589-590]{atiyah-bott_yangmills}, we know that Harder-Narasimhan reductions of $E$ arise from canonical reductions $s^*\Fr(E)$ of $\Fr(E)$ to $P_I$. More precisely, by generalizing Lemma \ref{lem_reductsubbundcorr} from subbundles of $E$ to filtrations of $E$, $s^*\Fr(E)$ corresponds to a filtration of $E$, which is the Harder-Narasimhan filtration of $E$. To show \ref{bh1}, we use that the extension $(s^*\Fr(E))(L_I)$ to the Levi-factor $L_I$ of $P_I$ is the frame bundle of the product of the successive quotients $F_m$, $m=1,\ldots,t$ of the Harder-Narasimhan filtration of the vector bundle $E$ (the graded vector bundle associated to that filtration). Since the quotients are slope-semistable, their frame bundles are Ramanathan-semistable due to Theorem \ref{thm_ramversusslope}. Hence, $(s^*\Fr(E))(L_I)$ is Ramanathan-semistable as a product of Ramanathan-semistable bundles. Now we prove \ref{bh2}. We recall that characters $\chi:P_I\rightarrow\C^\times$ are completely determined by their restriction $\chi|_{L_I}:L_I\rightarrow\C^\times$ to the Levi-factor, as in the Levi decomposition $P_I=U_IL_I$, $U_I$ is a unipotent group (and thus has trivial characters). Since $L_I$ is the product of groups $\GL(r_m,\C)$, $m=1\ldots,t$, the dominant characters $\chi:P_I\rightarrow\C^\times$ restricted to $L_I$ are easy to enumerate: They are generated by the characters $\chi_m$, $m=1\ldots,t-1$, that evaluate the determinant of the $m$-th diagonal block, and divide by the determinant of the $m+1$-st diagonal block, as seen in Example \ref{ex_character4}.
    Hence, it suffices to show the inequality in \ref{bh2} for characters $\chi_m$, $m=1\ldots,t-1$. We have:
    \begin{equation*}
        \deg(\chi_m(s^*\Fr(E)))=\deg(F_m\otimes F_{m+1}^*)=\deg(F_m)\rk(F_{m+1})-\deg(F_{m+1})\rk(F_m)>0,
    \end{equation*}
    due to the slope inequalities of the Harder-Narasimhan filtration of $E$.
\end{proof}

To generalize Lemma \ref{lem_hnreduct} to the general principal $G$-bundle case, where $G$ is complex reductive, and prove Theorem \ref{thm_bh}, we need to review some facts about characters. For the rest of this subsection, we fix a Cartan $T$ and a Borel $B$ of $G$, such that $T\subseteq B\subseteq G$ with the simple roots $\triangle$.

\begin{lemma}\label{lem_canonreduct}
    For a principal $G$-bundle $\xi$, and a standard parabolic subgroup $P_I$ of $G$:
    \begin{enumerate}[label=(\roman*)]
        \item\label{lem_canonreducti} For all $\alpha\in I$, there exists a nontrivial character $\chi_\alpha:P_I\rightarrow\C^\times$, such that $D\chi_\alpha|_{\Lie(T)}:\Lie(T)\rightarrow\C$ is an integer linear combination of elements in $\triangle$, whose coefficient in $\alpha$ is positive, and whose coefficients in $I\setminus\{\alpha\}$ are $0$.

        \item\label{lem_canonreductii} For a reduction $s^*\xi$ of $\xi$ to a standard parabolic subgroup $P_I$ of $G$, fulfilling \ref{bh1}, condition \ref{bh2} is fulfilled if and only if for all $\alpha\in I$, there exists a character $\chi_\alpha$ from \ref{lem_canonreducti} such that $\deg(\chi_\alpha(s^*\xi))>0$.
    \end{enumerate}
\end{lemma}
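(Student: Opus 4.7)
The plan is to reduce both assertions to linear algebra in the character lattice of $P_I$. Recall that characters $\chi : P_I \to \C^\times$ factor through the Levi $L_I$ (the unipotent radical has no nontrivial characters), and by $W_{L_I}$-invariance they correspond to weights $\lambda \in \Lie(T)^*$ satisfying $\langle \lambda, \delta^\vee\rangle = 0$ for every $\delta \in \triangle \setminus I$. This dictionary will be the workhorse for both parts.

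For \ref{lem_canonreducti}, I would fix $\alpha \in I$ and look for $D\chi_\alpha = c_\alpha \alpha + \sum_{\gamma \in \triangle \setminus I} c_\gamma\, \gamma$ with $c_\alpha > 0$. Imposing $W_{L_I}$-invariance yields the inhomogeneous system $A_{L_I}\,\vec c = -c_\alpha\,\vec b$, where $A_{L_I} = (\langle \gamma, \delta^\vee\rangle)_{\gamma,\delta \in \triangle \setminus I}$ is the Cartan matrix of $L_I$ and $\vec b = (\langle \alpha, \delta^\vee\rangle)_{\delta \in \triangle \setminus I}$. Since $L_I$ is reductive, $A_{L_I}$ is invertible, so this admits a unique rational solution; scaling $c_\alpha$ to a sufficiently divisible positive integer clears denominators and produces the required $\chi_\alpha$. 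This step is essentially bookkeeping.

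For \ref{lem_canonreductii}, the key observation is that the $\chi_\alpha$ just constructed is itself a dominant character. Distinct simple roots satisfy $\langle \alpha, \delta^\vee\rangle \leq 0$, so $\vec b \leq 0$ entrywise, and the inverse of a Cartan matrix of finite type has nonnegative entries (applied componentwise over the connected components of the Dynkin diagram of $L_I$). Hence $\vec c = -c_\alpha\,A_{L_I}^{-1}\vec b \geq 0$, and $D\chi_\alpha$ is a nonnegative integer combination of simple roots with positive $\alpha$-coefficient. Granting this, the forward implication follows immediately by applying \ref{bh2} to $\chi_\alpha$. For the converse, I would expand an arbitrary dominant character as $\chi = \sum_{\alpha \in I} q_\alpha \chi_\alpha$ over $\mathbb{Q}$: the $\chi_\alpha$'s are $|I|$ linearly independent elements of the $|I|$-dimensional space of characters of $P_I$ whose derivatives lie in the root lattice, and every dominant character lies in that space. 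Comparing coefficients of each simple root $\alpha \in I$, where only $\chi_\alpha$ contributes by construction, gives $q_\alpha = c_\alpha(\chi)/c_\alpha^{(\alpha)} \geq 0$; nontriviality of $\chi$ forces some $q_\alpha > 0$, since otherwise invertibility of $A_{L_I}$ would make $D\chi$ vanish. Clearing denominators and using additivity of degree on tensor powers of the associated line bundles then yields $\deg(\chi(s^*\xi)) = \sum_\alpha q_\alpha \deg(\chi_\alpha(s^*\xi)) > 0$.

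The main technical input is the positivity of $A_{L_I}^{-1}$, a classical fact about finite-type root systems that I would cite rather than reprove. I expect the hypothesis \ref{bh1} to play no direct role in this particular equivalence; it is the natural context in which Biswas and Holla's characterization of canonical reductions will be assembled in the proof of Theorem \ref{thm_bh}.
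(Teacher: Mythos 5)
Your proof is correct. Note that the paper itself does not prove this lemma: it defers to the middle part of \cite[Proposition 3.1]{biswas-holla_harder-narasimhanreductionofprincbund}. Your write-up is a self-contained substitute running along the same root-theoretic lines as that source: identify characters of $P_I$ with weights in $\Lie(T)^*$ orthogonal to the coroots of $\triangle\setminus I$, solve the system $A_{L_I}\vec{c}=-c_\alpha\vec{b}$ to produce $\chi_\alpha$, use nonnegativity of the inverse of a finite-type Cartan matrix to see that $\chi_\alpha$ is dominant (so \ref{bh2} gives the forward direction), and expand an arbitrary dominant character as $D\chi=\sum_{\alpha\in I}q_\alpha D\chi_\alpha$ with $q_\alpha\geq 0$ for the converse; the dimension count ($|I|$-dimensional space of root-lattice characters of $P_I$, in which the $\chi_\alpha$ form a basis) is right. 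Your expectation that \ref{bh1} plays no role is also borne out: the equivalence is purely character-theoretic, and that hypothesis only matters when the lemma is fed into Theorem \ref{thm_bh}.

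Two small points are worth making explicit. First, the hypothesis in part \ref{lem_canonreductii} supplies \emph{some} character satisfying \ref{lem_canonreducti} with positive degree, not necessarily the one you constructed; but since the conditions in \ref{lem_canonreducti} cut out a one-dimensional cone (invertibility of $A_{L_I}$ makes the map from root-lattice characters of $P_I$ to their $I$-coefficients injective), any two such characters for the same $\alpha$ are positive rational multiples of one another, so their degrees have the same sign and your expansion argument applies verbatim. Second, when you clear denominators you are using that equality of derivatives $ND\chi=\sum_\alpha Nq_\alpha D\chi_\alpha$ forces $\chi^N=\prod_\alpha\chi_\alpha^{Nq_\alpha}$ as characters, which holds because $P_I$ is connected; additivity of degree on the associated line bundles then gives $N\deg(\chi(s^*\xi))=\sum_\alpha Nq_\alpha\deg(\chi_\alpha(s^*\xi))>0$, where some $q_\alpha>0$ by nontriviality of $\chi$, exactly as you say. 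Finally, dominance should be read with respect to the pair $T\subseteq B$ fixed just before the lemma, which is what you implicitly do.
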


For a proof of this lemma, see the middle section of the proof of \cite[Proposition 3.1]{biswas-holla_harder-narasimhanreductionofprincbund}. In the $\GL(r,\C)$ case, we can think of $\chi_\alpha$, $\alpha\in I$ as corresponding to $\chi_m:P_I\rightarrow\C^\times$ from the proof of Lemma \ref{lem_hnreduct} and from Example \ref{ex_character4}. The above lemma tells us that it suffices to check the inequality of \ref{bh2} on finitely many characters, as we did in the proof of Lemma \ref{lem_hnreduct}. Another part of \cite[Proposition 3.1]{biswas-holla_harder-narasimhanreductionofprincbund} generalizes \textbf{Step 1} in our Harder-Narasimhan filtration proofs from Theorems \ref{thm_hardernarasimhan}-\ref{thm_hardernarasimhanortheven}.

\begin{lemma}\label{lem_degmin}
    For a principal $G$-bundle $\xi$:
    \begin{enumerate}[label=(\roman*)]
        \item Let:
              \begin{equation*}
                  \deg_{\max}(\xi):=\sup\left\{\deg(\ad(s^*\xi))\ \middle\vert\ \begin{array}{l}P_I\text{ is a standard parabolic subgroup of }G, \\
                      s^*\xi\text{ is a reduction of }\xi\text{ to }P_I
                  \end{array}\right\}.
              \end{equation*}
              We have $\deg_{\max}(\xi)<\infty$, and there exists reductions $s^*\xi$ of $\xi$ to $P_I$, such that $\deg(\ad(s^*\xi))=\deg_{\max}(\xi)$.

        \item Let:
              \begin{equation}\label{eq_degmin2}
                  \cP:=\left\{P_I\text{ is a standard parabolic subgroup of }G\ \middle\vert\ \begin{array}{l} \exists s^*\xi\text{ reduction of }\xi\text{ to }P_I, \\
                      \deg(\ad(s^*\xi))=\deg_{\max}(\xi)
                  \end{array}\right\}.
              \end{equation}
              There exists an element $P_I$ of $\cP$ that is maximal in terms of inclusion.
    \end{enumerate}
\end{lemma}

\begin{proof}
    We first prove \textit{(i)}. Due to the short exact sequence from Remark \ref{rem_adbundleexactseq}, for all reductions $s^*\xi$ of $\xi$ to a standard parabolic subgroup $P_I$ of $G$, $\ad(s^*\xi)$ is isomorphic to a subbundle of $\ad(\xi)$.
    Just like in \textbf{Step 1} of the proof of Theorem \ref{thm_hardernarasimhan}, we know that the degrees of subbundles of $\ad(\xi)$ are bounded from above, i.e., $\deg_{\max}(\ad(\xi))<\infty$. Thus, \textit{(i)} follows.

    Since $\cP$ is nonempty, we can clearly find a maximal element in $\cP$, proving \textit{(ii)}.
\end{proof}

Lemma \ref{lem_degmin} determines a reduction $s^*\xi$ of $\xi$, which we claim fulfills conditions \ref{bh1} and \ref{bh2}. The following lemma is taken from \cite[Proposition 3.1]{biswas-holla_harder-narasimhanreductionofprincbund}.

\begin{lemma}\label{thm_canonreduct2}
    There exists a reduction $s^*\xi$ of $\xi$ to a standard parabolic subgroup $P_I$ of $G$, such that:
    \begin{enumerate}[label=(\roman*)]
        \item\label{thm_canonreduct2i} The degree $\deg(\ad(s^*\xi))$ is equal to $\deg_{\max}(\xi)$.

        \item\label{thm_canonreduct2ii} The parabolic subgroup $P_I$ is maximal, in terms of inclusion, within $\cP$ from (\ref{eq_degmin2}).
    \end{enumerate}
    Furthermore, $s^*\xi$ is a reduction of $\xi$ to $P_I$ fulfilling \ref{bh1} and \ref{bh2}.
\end{lemma}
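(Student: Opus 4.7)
The plan is to combine Lemma~\ref{lem_degmin} with two extremality arguments, one for each of the Biswas-Holla conditions \ref{bh1} and \ref{bh2}. First, Lemma~\ref{lem_degmin} \emph{(i)} and \emph{(ii)} together produce a reduction $s^*\xi$ of $\xi$ to a standard parabolic $P_I$ such that $\deg(\ad(s^*\xi)) = \deg_{\max}(\xi)$ and $P_I$ is maximal for inclusion within $\cP$. This establishes \ref{thm_canonreduct2i} and \ref{thm_canonreduct2ii} immediately.

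Next, I would verify \ref{bh1} by contradiction. Suppose the extension $(s^*\xi)(L_I)$ to the Levi factor $L_I$ of $P_I$ is not Ramanathan-semistable. Then by Remark~\ref{rem_stabsstprincbund} there is a reduction of $(s^*\xi)(L_I)$ to some standard maximal parabolic $Q \subsetneq L_I$ with adjoint degree strictly positive. Pulling this back along the quotient map $P_I \twoheadrightarrow L_I$ (equivalently, combining $Q$ with the unipotent radical $U_I$ of $P_I$) refines $s^*\xi$ to a reduction $s'^*\xi$ of $\xi$ to a strictly smaller parabolic $P' \subsetneq P_I$. Via the identification $P_I/P' \cong L_I/Q$ and the short exact adjoint sequence of Remark~\ref{rem_adbundleexactseq}, one finds
\[
\deg(\ad(s'^*\xi)) = \deg(\ad(s^*\xi)) - \deg(s'^*V_{P_I/P'}) > \deg_{\max}(\xi),
\]
contradicting the definition of $\deg_{\max}(\xi)$.

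For \ref{bh2}, Lemma~\ref{lem_canonreduct}~\ref{lem_canonreductii} reduces the problem to checking positivity on the distinguished characters $\chi_\alpha$, $\alpha \in I$. Suppose for contradiction that $\deg(\chi_\alpha(s^*\xi)) \leq 0$ for some $\alpha \in I$. Extend $s^*\xi$ along the inclusion $P_I \hookrightarrow P_{I \setminus \{\alpha\}}$ to obtain a reduction $s''^*\xi$ of $\xi$ to the strictly larger standard parabolic $P_{I \setminus \{\alpha\}}$. A root-space decomposition of the $P_I$-representation $\Lie(P_{I \setminus \{\alpha\}})/\Lie(P_I)$, together with the defining property of $\chi_\alpha$ from Lemma~\ref{lem_canonreduct}~\ref{lem_canonreducti}, identifies the degree of the quotient $\ad(s''^*\xi)/\ad(s^*\xi)$ as a positive multiple of $-\deg(\chi_\alpha(s^*\xi))$, hence nonnegative. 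Consequently $\deg(\ad(s''^*\xi)) \geq \deg(\ad(s^*\xi)) = \deg_{\max}(\xi)$, so $P_{I \setminus \{\alpha\}} \in \cP$ properly contains $P_I$, contradicting the maximality of $P_I$ in $\cP$.

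The principal obstacle is the root-theoretic bookkeeping in the last step, namely identifying the determinant of the quotient representation $\Lie(P_{I \setminus \{\alpha\}})/\Lie(P_I)$ as a character of $P_I$ and matching it, up to characters of $G$ (which give line bundles of degree zero on the associated extension), with a positive multiple of $-\chi_\alpha$. Once this identification is in place, both contradictions are immediate consequences of the extremality of $s^*\xi$ built into the choice made using Lemma~\ref{lem_degmin}.
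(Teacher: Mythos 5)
Your proposal matches the paper's sketched strategy step for step: existence of the extremal reduction from Lemma~\ref{lem_degmin}; \ref{bh1} by contradicting $\deg(\ad(s^*\xi)) = \deg_{\max}(\xi)$ via the adjoint short exact sequence; and \ref{bh2} via Lemma~\ref{lem_canonreduct} together with a contradiction to the maximality of $P_I$ in $\cP$. The paper states exactly these steps and defers the details to Biswas and Holla, so your proposal correctly supplies them.

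One slip worth flagging: your parenthetical claim that characters of $G$ give degree-zero line bundles is false in general (take $\det$ applied to a frame bundle). Fortunately no such hedge is needed. The determinant character of the $P_I$-module $\Lie(P_{I\setminus\{\alpha\}})/\Lie(P_I)$ has derivative $-\sum_\beta \beta$ over the positive roots $\beta$ supported on $(\triangle\setminus I)\cup\{\alpha\}$ that involve $\alpha$; like $D\chi_\alpha$, this is an integer combination of simple roots with zero coefficients on $I\setminus\{\alpha\}$, vanishes on $\zbf(\Lie(G))$, and, being the derivative of a character of $P_I$, is determined by its restriction to $\zbf(\Lie(L_I))$, where the simple roots in $\triangle\setminus I$ also vanish. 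Hence on $\zbf(\Lie(L_I))$ the two derivatives are respectively negative and positive rational multiples of $\alpha$, and because $P_I$ is connected, equality of derivatives up to a positive rational scalar forces suitable positive integer powers of the two characters to coincide exactly. So the degree comparison in your final step holds on the nose, with no correction term, and the contradiction to maximality of $P_I$ in $\cP$ goes through.
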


The existence of a reduction fulfilling \ref{thm_canonreduct2i} and \ref{thm_canonreduct2ii} follows from Lemma \ref{lem_degmin}. Proving \ref{bh1} is done by first assuming $(s^*\xi)(L_I)$ is not Ramanathan-semistable, and reaching a contradiction to $\deg(\ad(s^*\xi))=\deg_{\max}(\xi)$. For \ref{bh2}, we use Lemma \ref{lem_canonreduct} so that it suffices to show $\deg(\chi_\alpha(s^*\xi))>0$ for certain characters $\chi_\alpha$. For proof details, see \cite[Proposition 3.1]{biswas-holla_harder-narasimhanreductionofprincbund}. Now that we have the existence of reductions fulfilling \ref{bh1} and \ref{bh2}, we know that they are unique up to conjugation. The following theorem is taken from \cite[Theorem 4.1]{biswas-holla_harder-narasimhanreductionofprincbund}.

\begin{theorem}\label{thm_canonreductunique}
    For reductions $s^*\xi$ of $\xi$ to $P_I$ and $s'^*\xi$ of $\xi$ to $P_{I'}$, fulfilling \ref{bh1} and \ref{bh2}, there exists a conjugation $\conj_g$, for $g\in G$, such that $\conj_g(P)=P'$, and an isomorphism $\psi:(s^*\xi)(P')\rightarrow s'^*\xi$.
\end{theorem}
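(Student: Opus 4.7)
The plan is to form a joint reduction of $\xi$ to an intersection $P \cap gP'g^{-1}$ for some $g \in G$, use a slope-comparison argument to force this intersection to equal $P$ (equivalently $P' = g^{-1}Pg$), and then invoke Lemma \ref{lem_adequivalentreduction} to produce the desired isomorphism $\psi$.

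First I would form the joint reduction. The pair $(s, s')$ defines a section of $\xi/P \times_X \xi/P' \cong \xi \times_G (G/P \times G/P')$, and the $G$-orbits on $G/P \times G/P'$ are parametrised by the double cosets $P\backslash G / P'$. On a Zariski-dense open subset $U \subseteq X$ the section factors through a single orbit, namely the one through $(eP, gP')$ for some $g \in G$, whose stabiliser is $Q := P \cap gP'g^{-1}$. This yields a reduction of $\xi|_U$ to $Q$, which extends to all of $X$ because $\xi/Q \to X$ is proper and $X$ is a smooth projective curve. Replacing $s'^*\xi$ by its conjugate under $\conj_g$, which also satisfies \ref{bh1} and \ref{bh2} since both conditions are stable under conjugation in $G$, I may assume $g = e$ and $Q = P \cap P'$; call the resulting reduction $t^*\xi$.

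Second, I would upgrade Lemma \ref{thm_canonreduct2} by showing that any reduction fulfilling \ref{bh1} and \ref{bh2} must realise the supremum $\deg_{\max}(\xi)$ on its adjoint bundle; this forces $\deg(\ad(s^*\xi)) = \deg(\ad(s'^*\xi)) = \deg_{\max}(\xi)$. Combined with the inclusions $\ad(t^*\xi) \subseteq \ad(s^*\xi)$ and $\ad(t^*\xi) \subseteq \ad(s'^*\xi)$ inside $\ad(\xi)$, this sets up a slope comparison. Using Ramanathan-semistability of the Levi-factor extension from \ref{bh1}, propagated to the associated graded pieces via Theorem \ref{thm_instflag}, together with the positivity of the dominant characters $\chi_\alpha$ from Lemma \ref{lem_canonreduct}\ref{lem_canonreducti} required by \ref{bh2}, I would exhibit a filtration of $\ad(s^*\xi)/\ad(t^*\xi)$ whose successive quotients are slope-semistable of strictly positive slope. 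Symmetrically, $\ad(\xi)/\ad(s'^*\xi)$ admits a filtration whose successive quotients are slope-semistable of strictly negative slope.

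Lemma \ref{lem_slopecomparisonlemma} applied to the composite $\ad(s^*\xi)/\ad(t^*\xi) \hookrightarrow \ad(\xi)/\ad(t^*\xi) \twoheadrightarrow \ad(\xi)/\ad(s'^*\xi)$ then forces this morphism to vanish, yielding $\ad(s^*\xi) \subseteq \ad(s'^*\xi)$; swapping the roles of $s$ and $s'$ gives the reverse inclusion, hence $\ad(s^*\xi) = \ad(s'^*\xi)$ as subbundles of $\ad(\xi)$. Lemma \ref{lem_adequivalentreduction} then produces the required conjugation $\conj_g$ with $\conj_g(P) = P'$ and an isomorphism $\psi:(s^*\xi)(P') \to s'^*\xi$ of principal-$P'$-bundles. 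The main obstacle is the filtration-construction step: one must carefully match the root-space decomposition of $\Lie(G)/\Lie(Q)$ against the positivity data encoded by the characters $\chi_\alpha$ in order to produce slope-semistable graded pieces with the required sign of slope, which is where the combinatorics of parabolic subgroups and their Levi decompositions enter the argument.
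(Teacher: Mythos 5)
There is no in-paper argument to compare against here: the paper simply imports this statement from Biswas--Holla [Theorem 4.1]. Judged on its own terms, your proposal contains the right final mechanism (a slope comparison via Lemma \ref{lem_slopecomparisonlemma} followed by Lemma \ref{lem_adequivalentreduction}), but two of your intermediate steps are genuinely flawed. First, the joint-reduction step: $Q=P\cap gP'g^{-1}$ is in general \emph{not} a parabolic subgroup (e.g.\ the intersection of two opposite Borels is a torus), so $\xi/Q\rightarrow X$ is not proper and the valuative-criterion extension of the $Q$-reduction from $U$ to all of $X$ fails as justified; the relative position of the two sections can jump on the complement of $U$. Second, the ``upgrade'' of Lemma \ref{thm_canonreduct2} -- that \emph{any} reduction satisfying \ref{bh1} and \ref{bh2} attains $\deg_{\max}(\xi)$ -- is asserted with no argument, and it is essentially as strong as the uniqueness you are trying to prove (in the paper's logic it is only available after Theorem \ref{thm_bh}, which itself relies on this uniqueness theorem, so invoking it here is circular). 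Relatedly, the claimed filtration of $\ad(s^*\xi)/\ad(t^*\xi)$ with \emph{strictly positive} semistable quotients is not justified: the directions of $\Lie(P)$ lying in the Levi of $P$ but not in $\Lie(Q)$ contribute pieces whose slopes are not controlled by \ref{bh2}.

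The good news is that neither the joint reduction nor the $\deg_{\max}$ claim is needed. Conditions \ref{bh1} and \ref{bh2} alone give: $\ad(s^*\xi)$ carries a filtration whose graded pieces are associated to $(s^*\xi)(L_I)$ via the $L_I$-action on $\Lie(L_I)$ and on the weight strata of the nilradical; after refining so that $\Rad(L_I)$ acts by scalars on each piece, Theorem \ref{thm_instflag} (together with Lemma \ref{lem_adbundle}) makes these pieces slope-semistable, of slope $0$ on the Levi part and of slope $>0$ on the nilradical parts by \ref{bh2} via Lemma \ref{lem_canonreduct} (this is exactly the degree computation in the proof of Theorem \ref{thm_hntype}). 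Dually, $\ad(\xi)/\ad(s'^*\xi)\cong s'^*V_{\xi/P_{I'}}$ has a filtration with semistable quotients of strictly negative slope. Lemma \ref{lem_slopecomparisonlemma} with $q=0$ applied to the composite $\ad(s^*\xi)\hookrightarrow\ad(\xi)\twoheadrightarrow\ad(\xi)/\ad(s'^*\xi)$ then gives $\ad(s^*\xi)\subseteq\ad(s'^*\xi)$, and by symmetry equality, after which Lemma \ref{lem_adequivalentreduction} yields the conjugation $\conj_g$ and the isomorphism $\psi$. If you rewrite your argument in this form, discarding the joint reduction and the unproven maximality claim, it becomes a complete proof; the filtration construction you flag as the ``main obstacle'' is precisely the weight-space bookkeeping sketched above and is where the real work lies.
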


Finally, we can prove Theorem \ref{thm_bh}, showing that canonical reductions in the sense of Definition \ref{can_red_def} are equivalent to fulfilling conditions \ref{bh1} and \ref{bh2}.

\begin{proof}[Proof of Theorem \ref{thm_bh}]
    It suffices to show that a canonical reduction fulfills conditions \ref{bh1} and \ref{bh2}. The other implication follows from the existence and uniqueness of canonical reductions, and the existence and uniqueness of reductions fulfilling \ref{bh1} and \ref{bh2} from Lemma \ref{thm_canonreduct2} and Theorem \ref{thm_canonreductunique}. Due to Lemma \ref{thm_canonreduct2}, it also suffices to show that for a canonical reduction $s^*\xi$ of $\xi$ to $P_I$, we have $\deg(\ad(s^*\xi))=\deg_{\max}(\xi)$, and that
    the parabolic subgroup $P_I$ is maximal, in terms of inclusion, within $\cP$ from (\ref{eq_degmin2}). Both claims follow from the fact that $\ad(s^*\xi)$ is isomorphic to $\ad(\xi)_t^\perp$ in the special-orthogonal Harder-Narasimhan filtration from Lemma \ref{lem_canonreduction}.
\end{proof}

\begin{remark}\label{rem_behrend}
    Biswas and Holla provided a bundle-theoretic proof to the existence of canonical reductions of principal bundles. Earlier in
    \cite{behrend_semi-stabilityofreductivegroupschemesovercurves}, Behrend provides a proof of the same result using his theory of \textit{complementary polyhedra}. From a principal $G$-bundle $\xi$, he constructs its complementary polyhedron, a root-theoretic object with a notion of semistability equivalent to Ramanathan-semistability: condition \ref{bh1} corresponds to verifying the stability of a projection of the complementary polyhedron, and condition \ref{bh2} is replaced with checking the positivity of the \textit{numerical invariants} of the complementary polyhedron. Behrend's approach to canonical reductions is particularly useful when generalizing canonical reductions to other interesting objects, such as Higgs bundles and parahoric torsors.
\end{remark}

\section{Harder-Narasimhan types}\label{sec_hntypes}

In this section, we learn how canonical reductions can be classified by obstruction classes and Harder-Narasimhan types, following \cite{friedman-morgan_ontheconversetoatheoremofatiyahbott} and \cite{ho-liu_yangmillsconnectionsorientable}. We then determine Harder-Narasimhan types for principal bundles with structure groups $\GL(r,\C)$, $\SO(r,\C)$, and $\Sp(2n,\C)$. To do this, we first review real forms $K$ of complex reductive groups $G$, which are maximal compact subgroups of $G$. Then, we review the algebraic characterization of the fundamental groups of $T$, $G$, $[G,G]$ and $G_{\ab}=G/[G,G]$ as $\mathbb{Z}$-modules.
Using Čech cohomology, we are then able to define obstruction classes and topological types of principal bundles.

\subsection{Fundamental groups of reductive groups}\label{subsec_fundamental}

We follow \cite[§26.1]{fulton-harris_representationtheory} to investigate real forms of $G$ and $\Lie(G)$.

\begin{definition}\label{def_realform}
    \begin{enumerate}[label=(\alph*)]
        \item A \textit{real form} $\k$ of $\Lie(G)$ is a real Lie subalgebra, whose complexification $\k\otimes_{\R}\C$ is isomorphic to $\Lie(G)$.

        \item Let $\k$ be a real form of $\Lie(G)$, such that its Killing-form $\kappa_{\k}$, a restriction of the Killing-form $\kappa$ of $\Lie(G)$, is negative-semidefinite. We call $\k$ a \textit{compact real form} of $\Lie(G)$.

        \item Let $K$ be a maximal compact real Lie subgroup of $G$, then $K$ is a \textit{compact real form} of $G$.
    \end{enumerate}
\end{definition}

As the name suggests, for a compact real form $K$ of $G$, we should expect its Lie algebra $\Lie(K)$ to be a compact real form of $\Lie(G)$.

\begin{lemma}\label{lem_realform}
    For a complex reductive group $G$, a real form $K$ of $G$ exists, for which its Lie algebra $\Lie(K)$ is a compact real form of $\Lie(G)$.
\end{lemma}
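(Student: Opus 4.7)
The plan is to construct $K$ in two stages: first its Lie algebra $\k := \Lie(K) \subseteq \Lie(G)$, then the group $K$ itself via an embedding into a unitary group. Since $G$ is complex reductive, $\Lie(G)$ decomposes as
\[
\Lie(G) = \mathfrak{z} \oplus \mathfrak{s},
\]
where $\mathfrak{z}$ is the abelian centre and $\mathfrak{s} = [\Lie(G), \Lie(G)]$ is semisimple, with Killing form $\kappa$ vanishing on $\mathfrak{z}$ and nondegenerate on $\mathfrak{s}$. This reduces the construction of a compact real form to the two summands separately.

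For the semisimple summand, I would fix a Cartan subalgebra $\mathfrak{h} \subseteq \mathfrak{s}$ with root-space decomposition $\mathfrak{s} = \mathfrak{h} \oplus \bigoplus_{\alpha \in \Phi} \mathfrak{s}_\alpha$ and a Chevalley basis $\{h_\alpha, e_\alpha\}_{\alpha \in \Phi}$, for which $[e_\alpha, e_{-\alpha}] = h_\alpha$ and the structure constants $N_{\alpha, \beta}$ are integers. The standard candidate
\[
\k_{\mathfrak{s}} := \mathrm{span}_{\R}\bigl\{\, i h_\alpha,\ e_\alpha - e_{-\alpha},\ i(e_\alpha + e_{-\alpha}) \ \bigm|\ \alpha \in \Phi^+ \,\bigr\}
\]
is closed under the bracket by a direct computation with the $N_{\alpha, \beta}$, and its complexification recovers $\mathfrak{s}$. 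The standard positivity $\kappa(h_\alpha, h_\alpha) > 0$ and $\kappa(e_\alpha, e_{-\alpha}) > 0$, combined with orthogonality of distinct root spaces under $\kappa$, yields negative definiteness of $\kappa|_{\k_{\mathfrak{s}}}$. For the centre, I would pick any $\R$-form $\k_{\mathfrak{z}} \subseteq \mathfrak{z}$; since $\kappa$ vanishes identically on $\mathfrak{z}$, the restriction is trivially negative-semidefinite. Setting $\k := \k_{\mathfrak{z}} \oplus \k_{\mathfrak{s}}$ then produces a compact real form of $\Lie(G)$ in the sense of Definition \ref{def_realform}(b).

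To integrate $\k$ to a compact real form $K \subseteq G$, I would invoke the embedding $G \hookrightarrow \GL(r, \C)$ from \cite[I. 1.10 Proposition]{borel_linearalgebraicgroups}. After conjugating this embedding so that $\k \subseteq \mathfrak{u}(r)$, the subgroup $K := G \cap U(r)$ is closed in the compact group $U(r)$, hence compact, with Lie algebra $\k$ by construction. Maximality of $K$ among real compact subgroups of $G$ follows because any element of $i\k$ would generate a one-parameter subgroup of Hermitian matrices in $\GL(r,\C)$, which is non-compact. The main obstacle is the conjugation step: one must show that the connected subgroup of $G$ generated by $\k$ preserves some Hermitian form on $\C^r$. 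This is Weyl's unitary trick, and it rests on the nontrivial fact that a semisimple Lie algebra with negative-definite Killing form integrates to a compact adjoint group, which is the essential external input separating bookkeeping from the genuine content of the theorem.
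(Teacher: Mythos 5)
Your construction of the compact real form $\k_{\mathfrak{s}}$ of the semisimple summand via a Chevalley basis is correct and is essentially what the Fulton--Harris reference in the paper's proof supplies; the negative-definiteness check and closure under bracket work as you say. But the treatment of the centre has a genuine gap. You write ``pick any $\R$-form $\k_{\mathfrak{z}} \subseteq \mathfrak{z}$,'' and you are right that negative-semidefiniteness of the restricted Killing form is vacuous there, so $\k = \k_{\mathfrak{z}} \oplus \k_{\mathfrak{s}}$ always satisfies Definition \ref{def_realform}(b). The problem is that not every such $\k$ can be integrated to a \emph{compact} subgroup of $G$, and that is precisely what your conjugation step needs. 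Take $G = \C^\times$, so $\mathfrak{z} = \C$: the real form $\R \subset \C$ exponentiates to $\R_{>0}$, which cannot be conjugated into $\U(1)$ by any linear automorphism, so the claim ``after conjugating this embedding so that $\k \subseteq \mathfrak{u}(r)$'' simply fails. Weyl's unitary trick produces an invariant Hermitian form by averaging over a \emph{compact} group; it does not help you show that the group is compact to begin with, so you cannot appeal to it to repair the choice. You must pick the specific real form of the centre whose exponential is the compact torus $\U(1)^q$ (in coordinates $\Rad(G)\cong(\C^\times)^q$, this is $i\R^q$, not an arbitrary real line). This is exactly what the paper's proof does: it takes a compact real form $K'$ of $[G,G]$ and multiplies by the chosen compact torus $R \cong \U(1)^q \subset \Rad(G)$, rather than allowing an arbitrary real form of the radical. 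Once that choice is fixed, the rest of your argument --- embedding $G$ in $\GL(r,\C)$, conjugating $\k$ into $\mathfrak{u}(r)$ via the unitary trick, and deducing maximality from the polar decomposition --- is sound, and parallels the paper's route through Knapp's Cartan decomposition.
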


\begin{proof}
    We first assume that $G$ is semisimple. By
    \cite[Proposition 26.4]{fulton-harris_representationtheory} and its following discussion, we know that there exists a unique real form $\k$ of $\Lie(G)$, up to inner automorphism, that induces a compact real Lie subgroup $K$ of $G$ (through the exponential map). To show that $K$ is also a compact real form, we note that $\k$ yields a decomposition $\Lie(G)\cong\k\oplus\underline{p}$ at the level of real Lie algebras, induced by a Cartan involution $\theta$, due to \cite[VI.2. Corollary 6.22]{knapp_liegroups}. From this, we can use \cite[Theorem 6.31 (g)]{knapp_liegroups} to follow that $K$ is a maximal compact subgroup of $G$, i.e., a compact real form. In the general case where $G$ reductive, we know that $[G,G]$ is semisimple, for which the above argument yields a compact real form $K'$ of $[G,G]$, whose Lie algebra $\Lie(K')$ is a compact real form of $\Lie([G,G])$. Since $G=[G,G]\Rad(G)$ due to \cite[IV. 14.2 Proposition]{borel_linearalgebraicgroups}, and $\Rad(G)\cong(\C^\times)^q$ is a torus due to \cite[IV. 11.21 Proposition]{borel_linearalgebraicgroups}, we choose a compact real Lie subgroup $R\cong\U(1)^q$ of $\Rad(G)$, and define $K=K'R$. It follows that $K$ is a compact real form of $G$, where $\Lie(K)$ is a compact real form of $\Lie(G)$.
\end{proof}

\begin{example}\label{ex_realform}
    \begin{enumerate}[label=(\alph*)]
        \item\label{ex_realforma} The real Lie group of unitary matrices $\U(r)$ has the Lie algebra $\Lie(\U(r))$ of skew-Hermitian matrices, which has a negative-semidefinite Killing-form. Since the complexification of $\Lie(\U(r))$ is $\Lie(\GL(r,\C))$. We follow that $\U(r)$ is a real form of $\GL(r,\C)$, using Lemma \ref{lem_realform}.
              A similar argument shows that $\SU(r)$ is a real form of $\SL(r,\C)$.

        \item\label{ex_realformb} $\Sp(2n,\C)$ has a real form $K:=\Sp(2n,\C)\cap\U(r)$, with the Lie algebra $\Lie(K)=\Lie(\Sp(2n,\C))\cap\Lie(\U(r))$.

        \item\label{ex_realformc} $\SO(r,\C)$ has a real form $K:=\SO(r,\C)\cap\U(r)$, with the Lie algebra $\Lie(K)=\Lie(\SO(r,\C))\cap\Lie(\U(r))$.
    \end{enumerate}
\end{example}

Real forms are constructed in a concrete way, for most complex reductive groups of interest, by using a polar decomposition, as seen in this remark.

\begin{remark}
    Assuming $G$ is a subgroup of $\GL(r,\C)$ closed under conjugate-transposition, there exists a complex vector space of skew-Hermitian matrices $\u$, such that $\Lie(G)$ is the complexification of $\u$ as a complex vector space. We can decompose $\u$ as $\u=\a+\i\b$, where $\a$ is a set of skew-symmetric matrices, and $\b$ is a set of symmetric matrices. We then get $\k=\a+\b$ as a compact real form of $\Lie(G)$, such that $\Lie(G)=\k+\i\k$.
\end{remark}

Let us fix a Cartan subgroup $T\cong(\C^\times)^r$ of $G$, and a real form $K$ of $G$. For $H=K\cap T$, we have that $H\cong\U(1)^r$ is a maximal compact torus of $K$.
We want algebraic characterizations of the fundamental groups of $T$, $G$, $[G,G]$, and $G_{\ab}$, as $\mathbb{Z}$-modules.
Some of these fundamental groups embed naturally as lattices of the real vector spaces, as defined in \cite[1]{friedman-morgan_ontheconversetoatheoremofatiyahbott}:
\begin{align*}
    \Lie(G)_\R:=\i\Lie(K), &  & \Lie(T)_\R:=\Lie(T)\cap\Lie(G)_\R, &  & \zbf(\Lie(G))_\R:=\zbf(\Lie(G))\cap\Lie(G)_\R,
\end{align*}
such that $\Lie(H)=\Lie(K)\cap\Lie(T)=\i\Lie(G)_{\R}\cap\Lie(T)=\i\Lie(T)_\R$.

\begin{remark}\label{rem_lattice}
    Due to $T\cong(\C^\times)^r$ and $H\cong\U(1)^r$, we have isomorphisms:
    \begin{equation*}
        \pi_1(T)\cong\pi_1(H)\cong\Hom(\U(1),H)\cong\mathbb{Z}^r,
    \end{equation*}
    where $\Hom(\U(1),H)$ are the \textit{cocharacters} of $H$.
    Since $\U(1)$ and $H$ are connected, we have an injection from $\Hom(\U(1),H)$ to the derivatives:
    \begin{equation*}
        \Hom(\Lie(\U(1)),\Lie(H))=\Hom_\R(\i\R,\i\Lie(T)_\R)\cong\Hom_\R(\R,\Lie(T)_\R)\cong \Lie(T)_{\R},
    \end{equation*}
    where the last isomorphism is given by $[2\pi\mapsto X]\mapsto X$.
    Thus, $\pi_1(T)$ is isomorphic to the full-rank lattice $\Gamma:=\{X\in \Lie(T)_\R\ \vert\ \exp_H(2\pi i X)=e\}$ of $\Lie(T)_{\R}\cong\R^r$, called the \textit{kernel lattice}.
\end{remark}

We now recall some constructions from abstract root systems, applied to the root system $\Phi(G,T)$ of $G$ on $(
    \mathbf{E},\langle\_,\_\rangle)$, the Euclidean space isomorphic to $\R^r$ where $\langle\_,\_\rangle$ is a nondegenerate symmetric form induced from the Killing-form in \cite[II.8.5]{humphreys_introductiontoliealgebras}.

\begin{definition}\label{def_coroot}
    For a root $\alpha\in\Phi(G,T)$:
    \begin{enumerate}[label=(\alph*)]
        \item The \textit{coroot} of $\alpha$ is $\alpha^\vee=2\alpha/\langle\alpha,\alpha\rangle$. The set of coroots is denoted by $\Phi^\vee(G,T)$.

        \item The \textit{dual-root} of $\alpha$ is $H_\alpha\in\mathbf{E}^*$, defined  by through isomorphism $\mathbf{E}\cong \mathbf{E}^*$ induced by the Killing-form. The set of dual-roots is denoted by $\Phi_H(G,T)$.

        \item The \textit{dual-coroot} of $\alpha$ is $H_\alpha^\vee\in\mathbf{E}^*$. The set of dual-coroots is denoted by $\Phi^\vee_H(G,T)$.
    \end{enumerate}
\end{definition}

By \cite[III 9.2]{humphreys_introductiontoliealgebras}, it is known that $\Phi^\vee(G,T)$ is a root system of $(\mathbf{E},\langle\_,\_\rangle)$. It is also clear that dual-roots and dual-coroots form root systems.

\begin{remark}\label{rem_lattice2}
    Note that $\mathbf{E}$ is a subspace of $\Lie(T)$. Furthermore, the \textit{dual-coroot lattice} $\Lambda:=\Span_\mathbb{Z}(\Phi_H^\vee(G,T))$ is contained within the kernel lattice $\Gamma$ from Remark \ref{rem_lattice}, as proven in \cite[Lemma 12.8]{hall_liegroupsliealgebrasandreps}.
\end{remark}

We can now search for expressions of $\pi_1(G)$, $\pi_1([G,G])$, and $\pi_1(G_{\ab})$, following \cite[$\S$2]{friedman-morgan_ontheconversetoatheoremofatiyahbott} and \cite[$\S$3.3]{ho-liu_yangmillsconnectionsorientable}.

\begin{theorem}\label{thm_fundgr}
    For a complex reductive group $G$ and a Cartan subgroup $T$:
    \begin{enumerate}[label=(\roman*)]
        \item The inclusion $T\hookrightarrow G$ induces a surjection $\pi_1(T)\rightarrow\pi_1(G)$, such that $\pi_1(G)\cong\Gamma/\Lambda$.
    \end{enumerate}
    Let $\widehat{\Lambda}$ be the saturation of $\Lambda$ in $\Gamma$, i.e., the sublattice $\widehat{\Lambda}$ of $\Gamma$ containing $\Lambda$, minimal with respect to inclusion, such that $\Gamma/\widehat{\Lambda}$ is free.
    We claim that:
    \begin{enumerate}[label=(\roman*),resume]
        \item The group $\pi_1(G_{\ab})\cong\mathbb{Z}^q$ is a lattice, and the group $\pi_1([G,G])$ is finite.

        \item The short exact sequence $1\rightarrow [G,G]\rightarrow G\rightarrow G_{\ab}\rightarrow1$ induces the short exact sequence:
              \begin{equation}\label{eq_fundgr1}
                  1\rightarrow\pi_1([G,G])\rightarrow\pi_1(G)\rightarrow\pi_1(G_{\ab})\rightarrow 1,
              \end{equation}
              of fundamental groups.
              In particular, $\pi_1(G_{\ab})\cong\Gamma/\widehat{\Lambda}$ and $\pi_1([G,G])\cong\widehat{\Lambda}/\Lambda$.
    \end{enumerate}
\end{theorem}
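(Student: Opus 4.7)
The plan is to pass everything to the compact real form $K$, where the relevant assertions are classical statements about compact Lie groups, and then identify the resulting groups with the lattice-theoretic quantities $\Gamma$, $\Lambda$, $\widehat{\Lambda}$ introduced in Remarks \ref{rem_lattice} and \ref{rem_lattice2}. The key preliminary observation is that polar decomposition gives diffeomorphisms $G \cong K \times \i\Lie(K)$ and $T \cong H \times \i\Lie(H)$, so $\pi_1(G) \cong \pi_1(K)$ and $\pi_1(T) \cong \pi_1(H) \cong \Gamma$. Analogously $\pi_1([G,G]) \cong \pi_1(K')$ and $\pi_1(G_{\ab}) \cong \pi_1(K/K')$, where $K'$ denotes the compact form of $[G,G]$ constructed in Lemma \ref{lem_realform}.

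For part \textit{(i)}, I would invoke the classical result that for a connected compact Lie group $K$ with maximal torus $H$, the inclusion $H \hookrightarrow K$ induces a surjection $\pi_1(H) \twoheadrightarrow \pi_1(K)$ whose kernel is the sublattice of $\Gamma$ generated by the coroots of $(K,H)$ (see e.g.\ \cite[§V.7]{brocker-tomdieck_representationsofcompactliegroups}). Under the identifications set up in Section \ref{subsec_fundamental}, this coroot sublattice is precisely $\Lambda = \Span_\mathbb{Z}(\Phi_H^\vee(G,T))$, giving $\pi_1(G) \cong \Gamma/\Lambda$. The main subtlety here is ensuring that the ``coroots'' appearing in the topological statement for $K$ genuinely correspond to the dual-coroots $H_\alpha^\vee$ defined algebraically from $\Phi(G,T)$; this is a normalisation check using the identification $\Lie(T)_\R \cong \i\Lie(H)$.

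For part \textit{(ii)}, the quotient $G_{\ab} = G/[G,G]$ is a connected complex abelian reductive group, hence isomorphic to $(\C^\times)^q$ for $q = \dim_\C(\Rad(G))$ (using $G = [G,G]\Rad(G)$ from \cite[IV.14.2]{borel_linearalgebraicgroups}), so $\pi_1(G_{\ab}) \cong \mathbb{Z}^q$. For the finiteness of $\pi_1([G,G])$, the group $[G,G]$ is semisimple, so its compact form $K'$ satisfies $H^1(K';\R) = 0$ by Weyl's theorem, which combined with Hurewicz forces $\pi_1(K') \otimes_\mathbb{Z} \R = 0$, i.e., $\pi_1([G,G])$ is a finitely generated torsion group, hence finite.

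For part \textit{(iii)}, the map $G \to G_{\ab}$ is a principal $[G,G]$-bundle (in the complex analytic sense, hence a fibration on underlying topological spaces), so the long exact sequence of homotopy groups yields
\begin{equation*}
\pi_2(G_{\ab}) \to \pi_1([G,G]) \to \pi_1(G) \to \pi_1(G_{\ab}) \to \pi_0([G,G]).
\end{equation*}
Since $G_{\ab}$ is a torus, $\pi_2(G_{\ab}) = 0$, and since $[G,G]$ is connected, $\pi_0([G,G]) = 0$, giving the short exact sequence \eqref{eq_fundgr1}. To extract the lattice identifications, observe that $\pi_1(G_{\ab}) \cong \mathbb{Z}^q$ is torsion-free, so the image of $\pi_1([G,G])$ in $\pi_1(G) \cong \Gamma/\Lambda$ must lie in the torsion subgroup of $\Gamma/\Lambda$; conversely, since the quotient is free, this image equals the full torsion subgroup, which by definition of the saturation is $\widehat{\Lambda}/\Lambda$. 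Combined with the finiteness of $\pi_1([G,G])$ from part \textit{(ii)}, the map $\pi_1([G,G]) \to \widehat{\Lambda}/\Lambda$ is an isomorphism, and the quotient gives $\pi_1(G_{\ab}) \cong \Gamma/\widehat{\Lambda}$. The hardest step is really part \textit{(i)}, specifically the careful bookkeeping between the topological coroot lattice of $K$ and the algebraic dual-coroot lattice $\Lambda$; everything else is routine application of long exact sequences and elementary lattice arithmetic.
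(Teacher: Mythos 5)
Your proposal is correct and follows essentially the same route as the paper: part \textit{(i)} by passing to the compact form $K$ and invoking the classical computation $\pi_1(K)\cong\Gamma/\Lambda$ relative to the (dual-)coroot lattice (the paper cites Hall, you cite Br\"ocker--tom Dieck), part \textit{(ii)} via $G_{\ab}\cong(\C^\times)^q$ and the vanishing of $H^1$ of the semisimple compact form, and part \textit{(iii)} via the long exact homotopy sequence of the fibration $G\rightarrow G_{\ab}$ together with $\pi_2(G_{\ab})=1$ and $\pi_0([G,G])=1$. Your explicit lattice arithmetic identifying the image of $\pi_1([G,G])$ with the torsion subgroup $\widehat{\Lambda}/\Lambda$ (which needs the finiteness from \textit{(ii)}, not merely the freeness of $\pi_1(G_{\ab})$, for the inclusion of the image into the torsion) simply spells out what the paper states as an immediate consequence of the exact sequence.
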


\begin{proof}
    For the proof of \textit{(i)}, see \cite[Corollary 13.18]{hall_liegroupsliealgebrasandreps}, noting that $T\hookrightarrow G$ is the complexified version of $H\hookrightarrow K$.
    In particular, the surjectivity of $\pi_1(T)\rightarrow\pi_1(G)$ is proven in \cite[Proposition 13.37]{hall_liegroupsliealgebrasandreps}.

    We prove \textit{(ii)}. Since $G_{\ab}\cong(\C^\times)^q$, we have that $\pi_1(G_{\ab})\cong\mathbb{Z}^q$ is a lattice.
    As $[G,G]$ has a semisimple real form $[K,K]$, Lie algebra cohomology implies that $\pi_1([G,G])$ is finite, as seen in \cite[Theorem 16.1]{chevalley-eilenberg_cohomologytheoryofliegroupsliealgebras}.

    We now prove \textit{(iii)}. Since $G\rightarrow G_{\ab}$ is a fibration, with fibers isomorphic to $[G,G]$, it induces the long exact homotopy sequence:
    \begin{equation*}
        \ldots\rightarrow\pi_2(G_{\ab})\rightarrow\pi_1([G,G])\rightarrow\pi_1(G)\rightarrow\pi_1(G_{\ab})\rightarrow\pi_0([G,G])\rightarrow\ldots
    \end{equation*}
    Since $[G,G]$ is connected, we have $\pi_0([G,G])\cong1$. Using Morse theory results from \cite{bott_morsetheorytopologyofliegroups}, it can be proven that $\pi_2(G_{\ab})\cong1$, as $G_{\ab}$ is a Lie group. Therefore, the sequence in (\ref{eq_fundgr1}) is exact.
    The isomorphisms $\pi_1(G_{\ab})\cong\Gamma/\widehat{\Lambda}$, and $\pi_1([G,G])\cong\widehat{\Lambda}/\Lambda$, follow directly from (\ref{eq_fundgr1}) and \textit{(i)}, \textit{(ii)}.
\end{proof}

Harder-Narasimhan types and obstruction classes will be defined through an element in $\pi_1(G_{\ab})$, so we need to embed $\pi_1(G_{\ab})$ into a lattice inside $\Lie(T)_\R$.

\begin{remark}\label{rem_lattice3}
    \begin{enumerate}[label=(\alph*)]
        \item\label{rem_lattice3a} We denote the character lattices of $G$ and $G_{\ab}$ by $\Xbf(G)$ and $\Xbf(G_{\ab})$ respectively. For a real form $K_{\ab}$ of $G_{\ab}$, we have:
              \sloppy
              \begin{equation*}
                  \Xbf(G)\cong \Xbf(G_{\ab})\cong\Hom(K_{\ab},\U(1)).
              \end{equation*}
              The first isomorphism uses that characters $\chi:G\rightarrow\C^\times$ map to an abelian group $\C^\times$.
              Since $K_{ab}$ and $\U(1)$ are connected, we have an injection from $\Hom(K_{\ab},\U(1))$ to the derivatives:
              \begin{align*}
                  \Hom(\zbf(\Lie(K)),\Lie(\U(1))) & =\Hom_{\R}(\i\zbf(\Lie(G))_\R,\i\R)                       \\
                                                  & \cong\Hom_{\R}(\zbf(\Lie(G))_\R,\R)=\zbf(\Lie(G))^*_{\R}.
              \end{align*}
              Thus, $\Xbf(G)$ is isomorphic to the full-rank lattice:
              \begin{equation*}
                  \Theta^*:=\left\{f\in\zbf(\Lie(G))^*_{\R}\ \middle\vert\ f(\i\ker(\exp_{K_{\ab}}))\subseteq\Span_\mathbb{Z}(2\pi)\right\},
              \end{equation*}
              of $\zbf(\Lie(G))^*_{\R}\cong\R^q$.

        \item\label{rem_lattice3b} The isomorphisms $T\cong (\C^\times)^r$ and $G_{\ab}\cong (\C^\times)^q$ induce isomorphisms $\Xbf(G)\cong\pi_1(G_{\ab})$ and $\zbf(\Lie(G))_\R^*\cong\zbf(\Lie(G))_\R$. Thus, $\pi_1(G_{\ab})$ embeds into the lattice $\Theta^*$ of $\zbf(\Lie(G))_\R^*$, isomorphic to a lattice $\Theta$ of $\zbf(\Lie(G))_\R$.

              Through the projection $T\rightarrow G_{\ab}$, we can induce a map $\Lie(T)_\R\rightarrow\zbf(\Lie(G))_\R$, and through the isomorphism $G_{\ab}\cong\Rad(G)$, we can induce a map $\zbf(\Lie(G))_\R\rightarrow\Lie(T)_\R$. We compose these maps as $\varphi:\zbf(\Lie(G))_{\R}\rightarrow\zbf(\Lie(G))_{\R}$, which is an isomorphism.
              From this, we induce an embedding of $\pi_1(G_{\ab})$ into $\zbf(\Lie(G))_{\R}$, as $\Psi=\varphi^{-1}(\Theta)$.
    \end{enumerate}
\end{remark}

This extra step in \ref{rem_lattice3b} may seem strange at first, as we had already found a way to embed $\pi_1(G_{\ab})$ into $\zbf(\Lie(G))_{\R}$. Modifying the lattice through $\Psi=\varphi^{-1}(\Theta)$ is akin to mapping from degrees to the slopes in the vector bundle case, as we will see in Example \ref{ex_glrClattice} and Subsection \ref{subsec_hntypes}. This is required to verify the slope conditions of Harder-Narasimhan filtrations. Lastly, we need to encode the slope conditions of canonical reductions within $\Lie(T)_\R$. For this, we use Weyl chambers. Let us fix a Borel subgroup $B$ of $G$, containing $T$, inducing simple roots $\triangle$.

\begin{definition}\label{def_weylch}
    The set $C_B:=\{X\in\Lie(T)_\R\ \vert\ \forall\alpha\in\triangle:\alpha(X)\geq0\}$ is the \textit{(closed) Weyl chamber} of $B$.
\end{definition}

In the Biswas and Holla approach to canonical reductions, from Subsection \ref{subsec_bh}, we made use of extensions to Levi-factors, so we will also need to apply Theorem \ref{thm_fundgr} and Remark \ref{rem_lattice3} to Levi-factors.

\begin{remark}\label{rem_latticelevi}
    For a standard parabolic subgroup $P_I\cong U_I L_I$ of $G$, we have that $T$ is a Cartan subgroup of the Levi-factor $L_I$, which is a complex reductive group, and that $L_I\cap B$ is a Borel subgroup of $L_I$.
    Using Theorem \ref{thm_fundgr}, we have a dual-coroot lattice $\Lambda_I=\Span_{\mathbb{Z}}(\Phi_H^\vee(L_I,T))$ of $\Lie(T)_\R\cong\R^r$, such that $\pi_1(L_I)\cong\Gamma/\Lambda_I$. We obtain the short exact sequence:
    \begin{equation*}
        1\rightarrow\pi_1([L_I,L_I])\rightarrow\pi_1(L_I)\rightarrow\pi_1((L_I)_{\ab})\rightarrow 1.
    \end{equation*}
    Since $\Rad(G)\subseteq\Rad(L_I)$, we have $G_{\ab}\cong(\C^\times)^q$ and $(L_I)_{\ab}\cong(\C^\times)^{q'}$, for $0\leq q\leq q'\leq r$, such that $\pi_1((L_I)_{\ab})$ embeds into $\zbf(L_I)_{\R}\cong\R^{m'}$ as a full-rank lattice $\Psi_I$.
\end{remark}

These abstract constructions will be made much clearer through examples, where the real forms, fundamental groups, and lattices, are made explicit.

\begin{example}\label{ex_glrClattice}
    For $G=\GL(r,\C)$, we use our standard choices of Cartan and Borel subgroups, from Subsection \ref{subsec_slopesemistab}, with the isomorphism $T\cong(\C^\times)^r$, mapping every diagonal entry of a matrix to the corresponding component in $(\C^\times)^r$. Thus, we have $\pi_1(T)\cong\mathbb{Z}^r$.

    Let $X^H$ denote the conjugate transpose of a matrix $X$. With the real form from Example \ref{ex_realform} \ref{ex_realforma}, we calculate the following real vector spaces, and the Weyl chamber:
    \begin{align*}
        \Lie(\GL(r,\C))_\R       & =\i\Lie(\U(r))=\left\{X\in\Lie(\GL(r,\C))\ \middle\vert\ X-X^H=0\right\},                                                           \\
        \Lie(T)_\R               & =\Lie(T)\cap\Lie(\GL(r,\C))_\R=\left\{\begin{pmatrix}
                                                                             z_1 & 0      & 0   \\
                                                                             0   & \ddots & 0   \\
                                                                             0   & 0      & z_r
                                                                         \end{pmatrix}\ \middle\vert\ z_1,\ldots,z_r\in\R  \right\},                                   \\
        \zbf(\Lie(\GL(r,\C)))_\R & =\zbf(\Lie(\GL(r,\C)))\cap\Lie(\GL(r,\C))_\R=\Span_\R(I_r),                                                                         \\
        C_B                      & =\left\{\begin{pmatrix}
                                               z_1 & 0      & 0   \\
                                               0   & \ddots & 0   \\
                                               0   & 0      & z_r
                                           \end{pmatrix}\in\Lie(T)_\R\ \middle\vert\ z_1\geq\ldots\geq z_{r}\right\}.
        \intertext{Furthermore, we use the simple roots $\triangle_H^\vee$ of the dual-coroots $\Phi_H^\vee(\GL(r,\C),T)$ to find the following lattices:}
        \Gamma                   & =\left\{\begin{pmatrix}
                                               z_1 & 0      & 0   \\
                                               0   & \ddots & 0   \\
                                               0   & 0      & z_r
                                           \end{pmatrix}\ \middle\vert\ z_1,\ldots,z_r\in\mathbb{Z}  \right\},                                                         \\
        \Lambda                  & =\left\{\begin{pmatrix}
                                               z_1 & 0      & 0   \\
                                               0   & \ddots & 0   \\
                                               0   & 0      & z_r
                                           \end{pmatrix}\in\Gamma\ \middle\vert\ \sum_{i=1}^nz_i=0\right\},
        \qquad\widehat{\Lambda}     = \Lambda,                                                                                                                         \\
        \Theta^*                 & = \left\{f\in\zbf(\Lie(\GL(r,\C)))_{\R}^*\ \middle\vert\ f(\Span_{\mathbb{Z}}(2\pi I_r))\subseteq \Span_{\mathbb{Z}}(2\pi)\right\}, \\
        \Theta                   & = \Span_\mathbb{Z}(I_r),                                                                                                            \\
        \Psi                     & = (1/r)\Span_\mathbb{Z}(I_r).
    \end{align*}
    From this, we conclude that $\pi_1(\SL(r,\C))\cong 1$, $\pi_1(\GL(r,\C))\cong \mathbb{Z}$, and $\pi_1(\GL(r,\C)_{\ab})\cong \mathbb{Z}$, using Theorem \ref{thm_fundgr}.

    Following Remark \ref{rem_latticelevi}, we also explain how these constructions appear for Levi-factors. We restrict to maximal standard parabolic subgroups $P_I= U_I L_I$ of $\GL(r,\C)$, where $I=\{\alpha_{l,l+1}\}\subseteq\triangle$.
    We calculate the following real vector spaces and lattices:
    \begin{align*}
        \zbf(\Lie(L_I))_\R & =\zbf(\Lie(L_I))\cap\Lie(L_I)_\R=\left\{\begin{pmatrix}
                                                                         z_1I_l & 0          \\
                                                                         0      & z_2I_{r-l}
                                                                     \end{pmatrix}\ \middle\vert\ z_1,z_2\in\R \right\},                                                         \\
        \Lambda_I          & =\left\{\begin{pmatrix}
                                         z_1 & 0      & 0   \\
                                         0   & \ddots & 0   \\
                                         0   & 0      & z_r
                                     \end{pmatrix}\in\Gamma\ \middle\vert\ \sum_{i=1}^lz_i=\sum_{i=l+1}^rz_i=0\right\},\qquad\widehat{\Lambda}_I = \Lambda_I,                    \\
        \Theta_I^*         & = \left\{f\in\zbf(\Lie(L_I))_{\R}^*\ \middle\vert\ z_1,z_2\in\mathbb{Z}: f\left(2\pi\begin{pmatrix}
                                                                                                                         z_1I_l & 0          \\
                                                                                                                         0      & z_2I_{r-l}
                                                                                                                     \end{pmatrix}\right)\subseteq \Span_{\mathbb{Z}}(2\pi)\right\}, \\
        \Theta_I           & = \left\{\begin{pmatrix}
                                          z_1I_l & 0          \\
                                          0      & z_2I_{r-l}
                                      \end{pmatrix}\ \middle\vert\ z_1,z_2\in\mathbb{Z}  \right\},                                                                               \\
        \Psi_I             & = \left\{\begin{pmatrix}
                                          (z_1/l)I_l & 0                  \\
                                          0          & (z_2/(r-l))I_{r-l}
                                      \end{pmatrix}\ \middle\vert\ z_1,z_2\in\mathbb{Z}  \right\}.
    \end{align*}
    From this, we conclude that $\pi_1([L_I,L_I])\cong 1$, $\pi_1(L_I)\cong \mathbb{Z}^2$ and $\pi_1((L_I)_{\ab})\cong \mathbb{Z}^2$, using Theorem \ref{thm_fundgr}. These results can be generalized to all standard parabolics $P_I$ and their Levi-factors $L_I$, which containing have more than two diagonal blocks.
\end{example}
The last lattice $\Psi_I$ of Example \ref{ex_glrClattice} is a good indicator of where we will record Harder-Narasimhan types, $z_1/l$ and $z_2/(r-l)$ appear like slopes of quotients of a Harder-Narasimhan filtration. To improve our understanding, let us also include the case of $\Sp(2n,\C)$.

\begin{example}\label{ex_sprClattice}
    For $G=\Sp(2n,\C)$, we use our standard choices of Cartan and Borel subgroups, and all notation, from Subsection \ref{subsec_sympvecbun}, with the standard isomorphism $T\cong(\C^\times)^n$ via the mapping $T_{t_1,\ldots,t_n}=\Diag(t_1,\ldots,t_n,t_n^{-1},\ldots,t_1^{-1})\mapsto(t_1,\ldots,t_n)$. Thus, we have $\pi_1(T)\cong\Z^n$.

    Let us denote $Z_{z_1,\ldots,z_n}=\Diag(z_1,\ldots,z_n,-z_1,\ldots,-z_n)$ in $\Lie(T)$. With the real form from Example \ref{ex_realform} \ref{ex_realformb}, we calculate the following real vector spaces, and the Weyl chamber:
    \begin{align*}
        \Lie(\Sp(2n,\C))_\R       & =\i\Lie(K)=\left\{X\in \Lie(\GL(r,\C))\ \middle\vert\ M_{2n}X+X^TM_{2n}=0,X-X^H=0\right\},              \\
        \Lie(T)_\R                & =\Lie(T)\cap\Lie(\Sp(2n,\C))_\R=\left\{Z_{z_1,\ldots z_n}\ \middle\vert\ z_1,\ldots,z_n\in\R  \right\}, \\
        \zbf(\Lie(\Sp(2n,\C)))_\R & = 0,                                                                                                    \\
        C_B                       & =\left\{Z_{z_1,\ldots z_n}\in \Lie(T)_{\R}\ \middle\vert\ z_1\geq\ldots\geq z_n\geq 0\right\}.
        \intertext{Furthermore, we use the simple roots $\triangle_H^\vee$ of the dual-coroots $\Phi_H^\vee(\Sp(2n,\C),T)$ to find the following lattices:}
        \Gamma                    & =\left\{Z_{z_1,\ldots z_n}\ \middle\vert\ z_1,\ldots,z_n\in\mathbb{Z}  \right\},                        \\
        \Lambda                   & =\Gamma,\qquad\widehat{\Lambda}=\Gamma,                                                                 \\
        \Theta^*                  & =0, \qquad \Theta=0, \qquad \Psi=0.
    \end{align*}

    From this, we conclude that $\pi_1([\Sp(2n,\C),\Sp(2n,\C)])\cong1$, $\pi_1(\Sp(2n,\C))\cong1$ and $\pi_1(\Sp(2n,\C)_{\ab})\cong 1$, using Theorem \ref{thm_fundgr}. Following Remark \ref{rem_latticelevi}, we also explain how these constructions appear for Levi-factors. We restrict to maximal standard parabolic subgroups $P_I= U_I L_I$ of $\Sp(2n,\C)$, where $I=\{\alpha_{l,l+1}\},\{2\alpha_{l}\}\subseteq\triangle$.
    We calculate the following real vector spaces and lattices:
    \begin{align*}
        \zbf(\Lie(L_I))_\R & =\zbf(\Lie(L_I))\cap\Lie(L_I)_\R=\left\{\begin{pmatrix}
                                                                         zI_l & 0 & 0     \\
                                                                         0    & 0 & 0     \\
                                                                         0    & 0 & -zI_l \\
                                                                     \end{pmatrix}\ \middle\vert\ z\in\R  \right\},                                                        \\
        \Lambda_I          & =\left\{Z_{z_1,\ldots z_n}\in \Gamma\ \middle\vert\ \sum_{i=1}^lz_i=0\right\},  \qquad \widehat{\Lambda}_I = \Lambda_I,                       \\
        \Theta_I^*         & = \left\{f\in\zbf(\Lie(L_I))_{\R}^*\ \middle\vert\ z\in\mathbb{Z}: f\left(2\pi\begin{pmatrix}
                                                                                                                   zI_l & 0 & 0     \\
                                                                                                                   0    & 0 & 0     \\
                                                                                                                   0    & 0 & -zI_l \\
                                                                                                               \end{pmatrix}\right)\subseteq \Span_{\mathbb{Z}}(2\pi)\right\}, \\
        \Theta_I           & = \left\{\begin{pmatrix}
                                          zI_l & 0 & 0     \\
                                          0    & 0 & 0     \\
                                          0    & 0 & -zI_l \\
                                      \end{pmatrix}\ \middle\vert\ z\in\mathbb{Z}  \right\},                                                                               \\
        \Psi_I             & = \left\{\begin{pmatrix}
                                          (z/l)I_l & 0 & 0         \\
                                          0        & 0 & 0         \\
                                          0        & 0 & (-z/l)I_l \\
                                      \end{pmatrix}\ \middle\vert\ z\in\mathbb{Z}  \right\}.
    \end{align*}
    From this, we conclude that $\pi_1([L_I,L_I])\cong1$, $\pi_1(L_I)\cong\mathbb{Z}$ and $\pi_1(L_I)_{\ab}\cong\mathbb{Z}$, using Theorem \ref{thm_fundgr}.

    These results can definitely be generalized for all standard parabolics of $\Sp(2n,\C)$, and Levi-factors, which may have more diagonal blocks.
\end{example}

\begin{example}
    For $G=\SO(r,\C)$, we will not present the details, but wish to remark that unlike the cases of $\GL(r,\C)$ and $\Sp(2n,\C)$, there is a difference between the dual-coroot lattices $\Lambda$ and $\widehat{\Lambda}$, since $\Gamma/\Lambda$ has torsion elements. It turns out that $\pi_1([\SO(r,\C),\SO(r,\C)]) = \pi_1(\SO(r,\C))\cong \mathbb{Z}/2\mathbb{Z}$, and $\pi_1(\SO(r,\C)_{\ab})\cong 1$, using Theorem \ref{thm_fundgr}.
\end{example}

\subsection{Obstruction classes and Harder-Narasimhan types}\label{subsec_hntypes}

Now that we understand how fundamental groups of reductive groups appear as lattices, we can use them to define obstruction classes of principal bundles, which we can use to define Harder-Narasimhan types. For the universal covering $\widetilde{G}$ of $G$, which is a simply connected complex Lie group, we have the fibration:
\begin{equation}\label{eq_fibration}
    1\rightarrow\pi_1(G)\rightarrow\widetilde{G}\overset{p}{\rightarrow} G\rightarrow1,
\end{equation}
such that $p$ is a fiber bundle, and a central extension of groups. In particular, $\pi_1(G)$ is abelian.

We denote the structure sheaf of $X$ by $\O_X$.
By passing (\ref{eq_fibration}) through Čech cohomology, we obtain the maps between sets:
\begin{equation*}
    \ldots\rightarrow \check{H}^1(X,\O_X(\widetilde{G}))\overset{p_1}{\rightarrow} \check{H}^1(X,\O_X(G))\overset{o_2}{\rightarrow}\check{H}^2(X,\O_X(\pi_1(G))).
\end{equation*}
By construction, the sets $\check{H}^1(X,\O_X(\_))$ correspond to sets of isomorphism classes of principal bundles. These isomorphism classes can be represented by cocycles of principal bundles, which correspond to $1$-cocycles in the sense of Čech cohomology. As $X$ is a compact Riemann surface, the singular cohomologies $H^*(X,\pi_1(G))$ and Čech cohomologies $\check{H}^*(X,\O_X(\pi_1(G)))$ are naturally isomorphic, as seen in \cite[Chapter 4. 4.14. Theorem, Chapter 4. 5.3. Theorem]{ramanan_globalcalculus}. Applying Poincaré duality then gives us a canonical isomorphism $\check{H}^2(X,\O_X(\pi_1(G)))\cong \pi_1(G)$. We now investigate the connecting map $o_2:\check{H}^1(X,\O_X(G))\rightarrow\check{H}^2(X,\O_X(\pi_1(G)))$. For a principal $G$-bundle $\xi$, its isomorphism class $[\xi]\in\check{H}^1(X,\O_X(G))$ is represented by the cocycles $(\sigma_{i,j})_{i,j\in J}$.
For all $i,j\in J$, we use the universal lifting property of covering spaces to lift $\sigma_{i,j}$ to $\widetilde{\sigma_{i,j}}$, such that the following diagram commutes:
\begin{equation*}
    \begin{tikzpicture}[scale=1.3]
        \node (A) at (0,0) {$U_i\cap U_j$};
        \node (B) at (3,0.8) {$\widetilde{G}$};
        \node(C) at (3,0) {$G$};
        \path[->,font=\scriptsize,>=angle 90]
        (A) edge node[above]{$\widetilde{\sigma_{i,j}}$} (B)
        (A) edge node[above]{$\sigma_{i,j}$} (C)
        (B) edge node[right]{$p$} (C);
    \end{tikzpicture}
\end{equation*}
In general, these lifts are not unique, as $\sigma_{i,j}$ can have two different lifts that vary within the fibers of $p:\widetilde{G}\rightarrow G$.
Furthermore, we cannot guarantee that the cocycle conditions for $(\widetilde{\sigma_{i,j}})_{i,j\in I}$, are fulfilled.
We can measure the defect of this failure by defining for all $i,j,k\in J$:
\begin{equation*}\label{eq_2cocycle}
    \sigma_{i,j,k}=\widetilde{\sigma_{j,k}}\widetilde{\sigma_{i,k}}^{-1}\widetilde{\sigma_{i,j}}:U_i\cap U_j\cap U_k\rightarrow\pi_1(G).
\end{equation*}
This defines a $2$-cocycle $(\sigma_{i,j,k})_{i,j,k\in J}$ representing $o_2([\xi])\in\check{H}^2(X,\O_X(\pi_1(G)))$.
If $o_2([\xi])=1\in\check{H}^2(X,\O_X(\pi_1(G)))$, then $[\xi]$ is equivalently the image of an element $[\widetilde{\xi}]\in\check{H}^1(X,\O_X(\widetilde{G}))$, i.e., there exists a principal $\widetilde{G}$-bundle $\widetilde{\xi}$ such that $\widetilde{\xi}(G)\cong\xi$.

\begin{definition}\label{def_obstclass}
    Let $\xi$ be a principal $G$-bundle on $X$. Via the isomorphism $\check{H}^2(X,\O_X(\pi_1(G)))\cong\pi_1(G)$, the class $o_2([\xi])\in\check{H}^2(X,\O_X(\pi_1(G)))$ induces an element $o_2(\xi)\in\pi_1(G)$, called the \textit{(second) obstruction class} of $\xi$.
\end{definition}

Through the surjection $\pi_1(G)\rightarrow\pi_1(G_{\ab})$ from Theorem \ref{thm_fundgr}, we can further map the obstruction class $o_2(\xi)$ to an element $\widetilde{o_2}(\xi)\in\pi_1(G_{\ab})$.
As before, we fix a Cartan subgroup $T$ of $G$, and an isomorphism $T\cong(\C^\times)^r$.
We can now embed the obstruction class of $\xi$ into $\Lie(T)_{\R}$ to define its topological type.

\begin{definition}\label{def_type}
    The fundamental group $\pi_1(G_{\ab})$ embeds as a lattice $\Psi$ of $\zbf(\Lie(G))_\R$, due to Remark \ref{rem_lattice3} \ref{rem_lattice3b}.
    Through this, $\widetilde{o_2}(\xi)$ defines an element $\mu^\xi\in\Lie(T)_{\R}$, called the \textit{(topological) type} of $\xi$.
\end{definition}

\begin{remark}\label{rem_obstclassvsdegree}
    \begin{enumerate}[label=(\alph*)]
        \item\label{rem_obstclassvsdegreea} For $G=\GL(r,\C)$, we have that the type $\mu^\xi$ is a matrix $(d/r)I_r$, with $d\in\mathbb{Z}$, due to Example \ref{ex_glrClattice}. In particular, we recover $\deg \xi(\C^r) = d$ as the trace of that matrix, so $\mu^\xi$ indeed determines the topological type of $\xi$.

        \item\label{rem_obstclassvsdegreeb} For a vector bundle $E$ of rank $r$, we have $\mu^{\Fr(E)}=\mu(E)I_r$.
              To prove this, we could use that $\deg(E)=\int_Xc_1(E)$, where $c_1(E)$ denotes the \textit{first Chern class} of $E$. We would need to show that the first Chern class corresponds to the second obstruction class.
    \end{enumerate}
\end{remark}

The following remark explains that obstruction classes and types are functorial.

\begin{remark}\label{rem_obstructionfunctorial}
    Let $\varphi:G\rightarrow G'$ be a homomorphism of complex reductive Lie groups, where $G$ and $G'$. Let $\xi(G')$ be the extension of $\xi$ to $G'$ through $\varphi$.
    \begin{enumerate}[label=(\alph*)]
        \item Due to the functoriality of Čech cohomology, the induced morphism $\varphi_*:\pi_1(G)\rightarrow\pi_1(G')$ maps $o_2(\xi)$ to $o_2(\xi(G'))$.

        \item Since the abelianization of groups is functorial, the morphism $\varphi$ induces a morphism $\varphi_{\ab}:G_{\ab}\rightarrow G'_{\ab}$, such that the following diagram commutes:
              \begin{equation*}
                  \begin{tikzpicture}[scale=1.3]
                      \node (A) at (0,1) {$\pi_1(G)$};
                      \node (C) at (0,0) {$\pi_1(G_{\ab})$};
                      \node (B) at (2,1) {$\pi_1(G')$};
                      \node(D) at (2,0) {$\pi_1(G'_{\ab})$};
                      \path[->,font=\scriptsize,>=angle 90]
                      (A) edge node[left]{} (C)
                      (B) edge node[right]{} (D)
                      (C) edge node[above]{$(\varphi_{\ab})_*$} (D)
                      (A) edge node[above]{$\varphi_*$} (B);
                  \end{tikzpicture}
              \end{equation*}
              Thus, $(\varphi_{\ab})_*:\pi_1(G_{\ab})\rightarrow\pi_1(G'_{\ab})$ maps $\widetilde{o_2}(\xi)$ to $\widetilde{o_2}(\xi(G'))$.

        \item For a Cartan subgroup $T\cong(\C^\times)^r$ of $G$, and a Cartan subgroup $T'\cong(\C^\times)^{r'}$ of $G'$, such that $\varphi(\Rad(G))\subseteq\Rad(G')$ and $\varphi(T)\subseteq T'$, the type is also functorial, i.e., $D\varphi$ maps $\mu^\xi$ to $\mu^{\xi(G')}$.
    \end{enumerate}
\end{remark}

We are now able to state the existence and uniqueness of canonical reductions, as proven in Subsections \ref{subsec_ab} and \ref{subsec_bh}, using types, as shown in \cite[Lemma 2.2.1]{friedman-morgan_ontheconversetoatheoremofatiyahbott}.

\begin{theorem}\label{thm_hntype}
    There exists a reduction $s^*\xi$ of $\xi$ to a standard parabolic subgroup $P_I= U_IL_I$ of $G$, unique up to conjugation in the sense of Lemma \ref{lem_adequivalentreduction}, such that:
    \begin{enumerate}[label=(\roman*)]
        \sloppy
        \item The extension of $s^*\xi$ to the Levi-factor $L_I$, denoted by $(s^*\xi)(L_I)$, is Ramanathan-semistable.

        \item For the type $\mu^{(s^*\xi)(L_I)}\in\Lie(T)_{\R}$, we have $\mu^{(s^*\xi)(L_I)}\in C_B$. Furthermore, for all $\alpha\in I$, we have $\alpha(\mu^{(s^*\xi)(L_I)})>0$.
    \end{enumerate}
    This reduction $s^*\xi$ is a canonical reduction of $\xi$.
\end{theorem}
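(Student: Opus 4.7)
The plan is to derive the theorem from the Biswas--Holla characterization of canonical reductions. By Theorem \ref{thm_bh}, a reduction $s^*\xi$ is canonical if and only if it satisfies \ref{bh1} and \ref{bh2}; existence and uniqueness (up to the conjugation of Lemma \ref{lem_adequivalentreduction}) of such reductions were already furnished by Lemma \ref{thm_canonreduct2} and Theorem \ref{thm_canonreductunique}. Since condition \textit{(i)} above is literally \ref{bh1}, once I establish the equivalence of \textit{(ii)} with \ref{bh2} both parts of the theorem follow at once. I therefore focus on the equivalence \textit{(ii)} $\Leftrightarrow$ \ref{bh2}.

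The bridge between the two formulations is the identity
\[
    \deg\bigl(\chi(s^*\xi)\bigr) \;=\; D\chi\bigl(\mu^{(s^*\xi)(L_I)}\bigr)
\]
valid for any character $\chi\colon L_I\to\C^\times$. I would prove it by combining two ingredients already available in the paper: the functoriality of types under the extension associated to $\chi$ (Remark \ref{rem_obstructionfunctorial}), and the identification of the type of a principal-$\C^\times$-bundle with the degree of the associated line bundle (Remark \ref{rem_obstclassvsdegree}\ref{rem_obstclassvsdegreeb}). I expect this identity to be the main technical obstacle, since it ultimately requires tracing the second obstruction class through the connecting map of the Čech cohomology sequence induced by the universal cover $1\to\mathbb{Z}\to\C\to\C^\times\to 1$ and matching the resulting integer with the first Chern class via Poincaré duality.

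Granting the identity, the direction \textit{(ii)} $\Rightarrow$ \ref{bh2} proceeds as follows. By Lemma \ref{lem_canonreduct}\ref{lem_canonreductii} it suffices to test positivity against the distinguished characters $\chi_\alpha$, $\alpha\in I$, produced by Lemma \ref{lem_canonreduct}\ref{lem_canonreducti}, whose derivatives decompose as $D\chi_\alpha = c_\alpha\,\alpha + \sum_{\beta\in\triangle\setminus I} c_\beta\,\beta$ with $c_\alpha > 0$ and (since one can take $\chi_\alpha$ dominant) $c_\beta\geq 0$. Writing $\mu := \mu^{(s^*\xi)(L_I)}$, the identity gives
\[
    \deg\bigl(\chi_\alpha(s^*\xi)\bigr) \;=\; c_\alpha\,\alpha(\mu) \;+\; \sum_{\beta\in\triangle\setminus I} c_\beta\,\beta(\mu),
\]
which is strictly positive: by hypothesis $\alpha(\mu)>0$ and $c_\alpha>0$, while every other term is nonnegative because $\mu\in C_B$.

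For the reverse direction \ref{bh2} $\Rightarrow$ \textit{(ii)}, the key observation is that $\mu$ lies in the lattice $\Psi_I\subseteq\zbf(\Lie(L_I))_\R$ by Remark \ref{rem_latticelevi}. Since $\zbf(\Lie(L_I))_\R$ is orthogonal to every root of $L_I$, and the simple roots of $L_I$ are precisely $\triangle\setminus I$, one has $\beta(\mu)=0$ for all $\beta\in\triangle\setminus I$. The identity above then collapses to $\deg(\chi_\alpha(s^*\xi)) = c_\alpha\,\alpha(\mu)$ with $c_\alpha>0$, so \ref{bh2} forces $\alpha(\mu)>0$ for each $\alpha\in I$; combined with the vanishing $\beta(\mu)=0$ on the complementary simple roots this yields $\mu\in C_B$, completing \textit{(ii)}.
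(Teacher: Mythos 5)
Your proof follows essentially the same route as the paper: reduce to the Biswas--Holla characterization via Theorem \ref{thm_bh}, establish the degree-versus-type identity $\deg(\chi(s^*\xi)) = D\chi(\mu^{(s^*\xi)(L_I)})$ through Remarks \ref{rem_obstructionfunctorial} and \ref{rem_obstclassvsdegree}, and then translate condition \ref{bh2} into condition \textit{(ii)} using the distinguished characters of Lemma \ref{lem_canonreduct}. You are slightly cleaner than the paper in that you explicitly argue both directions of the equivalence \textit{(ii)}~$\Leftrightarrow$~\ref{bh2}, whereas the paper says ``it suffices to show \ref{bh2} implies \textit{(ii)}'' but tacitly establishes the converse as well in its computation.

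There is one soft spot in your forward direction: you invoke $c_\beta\geq 0$ for $\beta\in\triangle\setminus I$ ``since one can take $\chi_\alpha$ dominant,'' but Lemma \ref{lem_canonreduct}\ref{lem_canonreducti} as stated only controls the coefficient at $\alpha$ (positive) and at $I\setminus\{\alpha\}$ (zero), and says nothing about the signs of the coefficients on $\triangle\setminus I$. You do not actually need that assumption: the observation you make in your reverse direction --- namely that $\mu = \mu^{(s^*\xi)(L_I)}\in\zbf(\Lie(L_I))_\R$ and hence $\beta(\mu)=0$ for every $\beta\in\triangle\setminus I$, because those simple roots generate the root system of $L_I$ and vanish on its center --- already kills those correction terms regardless of the signs of $c_\beta$. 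This is precisely how the paper argues, and it also shows that the condition $\mu\in C_B$ in \textit{(ii)} is automatic once $\alpha(\mu)>0$ holds for $\alpha\in I$. Applying that vanishing symmetrically in both directions tightens your proof and removes the unwarranted appeal to dominance of $\chi_\alpha$.
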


\begin{proof}
    It suffices to show that the conditions \textit{(i)} and \textit{(ii)} are equivalent to the conditions \ref{bh1} and \ref{bh2} from Theorem \ref{thm_bh}.
    By definition, \textit{(i)} is equivalent to \ref{bh1} of Theorem \ref{thm_bh}.

    We prove that \textit{(ii)} is equivalent to \ref{bh2}, for which it suffices to show that \ref{bh2} implies \textit{(ii)}.
    Let $\chi:P_I\rightarrow\C^\times$ be any character, then:
    \begin{equation}\label{eq_hntype}
        D\chi(\mu^{s^*\xi})=\mu^{\chi(s^*\xi)}=\deg(\chi(s^*\xi)),
    \end{equation}
    where the first equality follows from the functoriality of types in Remark \ref{rem_obstructionfunctorial}, and the second follows from Remark \ref{rem_obstclassvsdegree} \ref{rem_obstclassvsdegreeb}.
    Similarly, for the restriction $\chi|_{L_I}:L_I\rightarrow\C^\times$, we have:
    \begin{equation}\label{eq_hntype2}
        D\chi(\mu^{(s^*\xi)(L_I)})=\mu^{\chi((s^*\xi)(L_I))}=\deg(\chi((s^*\xi)(L_I))).
    \end{equation}
    It follows that $D\chi(\mu^{(s^*\xi)(L_I)})=\deg(\chi(s^*\xi))$, since $\deg(\chi(s^*\xi))=\deg(\chi((s^*\xi)(L_I)))$. For all $\alpha\in\triangle\setminus I$, we have from decomposition of Levi-factors into weight spaces that all roots $\alpha\in\Span_\mathbb{Z}(\triangle\setminus I)$ act trivially on $\zbf(\Lie(L_I))$, hence $\alpha(\mu^{(s^*\xi)(L_I)})=0$, since
    $\mu^{(s^*\xi)(L_I)}\in\zbf(\Lie(L_I))_{\R}$.
    For $\alpha\in I$, let $\chi_\alpha:P_I\rightarrow\C^\times$ be a character as given in Lemma \ref{lem_canonreduct}. Using (\ref{eq_hntype}) and (\ref{eq_hntype2}), we have:
    \begin{equation*}
        D\chi_\alpha(\mu^{(s^*\xi)(L_I)})=\deg(\chi_\alpha(s^*\xi))>0,
    \end{equation*}
    which is a positive multiple of $\alpha(\mu^{(s^*\xi)(L_I)})$, so $\alpha(\mu^{(s^*\xi)(L_I)})>0$.
    Altogether, the type $\mu^{(s^*\xi)(L_I)}$ lies in $C_B$ and has the properties laid out in \textit{(ii)}.
\end{proof}

\begin{definition}\label{def_hntype}
    For a principal $G$-bundle $\xi$, and a reduction $s^*\xi$ of $\xi$ as in Theorem \ref{thm_hntype}, we call $\mu^{\xi}_{\HN}=\mu^{(s^*\xi)(L_I)}\in\Lie(T)_{\R}$ the \textit{Harder-Narasimhan type} of $\xi$.
\end{definition}

The Harder-Narasimhan type characterizes the topological type of any canonical reduction of $\xi$.
Due to the functoriality of types, the Harder-Narasimhan type $\mu^\xi_{\HN}$ also encodes the topological type $\mu^\xi$ of $\xi$ itself.
We will now see examples of the Harder-Narasimhan type $\mu^\xi_{\HN}$, and how it stores information about canonical reductions of $\xi$.

\begin{example}\label{ex_hntypehnfilt}
    Let $E$ be a vector bundle of rank $r$. We know that the canonical reduction $s^*\Fr(E)$ of $\Fr(E)$ to a standard parabolic subgroup $P_I$ of $\GL(r,\C)$ corresponds to the Harder-Narasimhan filtration of $E$:
    \begin{equation*}
        0=E_0\subsetneq\ldots\subsetneq E_t=E
    \end{equation*}
    with slope-semistable quotient bundles $F_{m}=E_{m}/E_{m-1}$, $m=1,\ldots,t$, of rank $r_m$, as seen in Theorem \ref{thm_hardernarasimhan}. By following Example \ref{ex_glrClattice}, and by noting that types coincide with slopes, as seen in Remark \ref{rem_obstclassvsdegree} \ref{rem_obstclassvsdegreeb}, the Harder-Narasimhan type $\mu^{\Fr(E)}_{\HN}$ of $\Fr(E)$ appears as:
    \begin{equation*}
        \mu^{\Fr(E)}_{\HN}=\begin{pmatrix}
            \mu(F_1)I_{r_1} & 0      & 0               \\
            0               & \ddots & 0               \\
            0               & 0      & \mu(F_t)I_{r_t} \\
        \end{pmatrix}\in\Lie(T)_{\R}.
    \end{equation*}
    From this, we see directly that the slope conditions of the Harder-Narasimhan filtration:
    \begin{equation*}
        \mu(F_1)>\ldots>\mu(F_t),
    \end{equation*}
    coincide with the inequalities given in \textit{(ii)} of Theorem \ref{thm_hntype}.
\end{example}

\begin{example}\label{ex_hntypespnfilt}
    Let $(E,\beta)$ be a symplectic vector bundle of rank $2n$, with the canonical reduction $s^*\Fr_{\Sp}(E,\beta)$ of $\Fr_{\Sp}(E,\beta)$ to a standard parabolic subgroup $P_I$ of $\Sp(2n,\C)$.
    We know that $s^*\Fr_{\Sp}(E,\beta)$ induces the symplectic Harder-Narasimhan filtration of $(E,\beta)$:
    \begin{equation*}
        0=E_0\subsetneq\ldots\subsetneq E_{t}\subsetneq E,
    \end{equation*}
    with semistable quotient bundles $F_{m}=E_{m}/E_{m-1}$, $m=1,\ldots,t$, of rank $r_m$, as seen in Theorem \ref{thm_hardernarasimhansymp}. Since the types of isotropic subbundles of $(E,\beta)$ coincide with slopes, similarly to Remark \ref{rem_obstclassvsdegree} \ref{rem_obstclassvsdegreeb}, the Harder-Narasimhan type $\mu^{\Fr_{\Sp}(E,\beta)}_{\HN}$ of $\Fr_{\Sp}(E,\beta)$ appears as:
    \begin{equation*}
        \mu^{\Fr_{\Sp}(E,\beta)}_{\HN}=\begin{pmatrix}
            \mu(F_1)I_{r_1} & 0      & \ldots          & \ldots & \ldots           & \ldots & 0                \\
            0               & \ddots & \ddots          & \ddots & \ddots           & \ddots & \vdots           \\
            \vdots          & \ddots & \mu(F_t)I_{r_t} & \ddots & \ddots           & \ddots & \vdots           \\
            \vdots          & \ddots & \ddots          & 0 I_s  & \ddots           & \ddots & \vdots           \\
            \vdots          & \ddots & \ddots          & \ddots & -\mu(F_t)I_{r_t} & \ddots & \vdots           \\
            \vdots          & \ddots & \ddots          & \ddots & \ddots           & \ddots & 0                \\
            0               & \ldots & \ldots          & \ldots & \ldots           & 0      & -\mu(F_1)I_{r_1} \\
        \end{pmatrix}\in\Lie(T)_{\R},
    \end{equation*}
    where the center block is $\mu^{\Fr(E_t^\perp/E_t,\beta)}_{\HN}=0 I_s$, where $s := \dim(E_t^\perp /E_t)$. From this, the slope conditions on the quotients of the symplectic Harder-Narasimhan filtration:
    \begin{equation*}
        \mu(F_{1})>\ldots>\mu(F_{t})>0,
    \end{equation*}
    coincide with the inequalities given in \textit{(ii)} of Theorem \ref{thm_hntype}.
\end{example}

As an application, we remark the use of Harder-Narasimhan types in stratifying moduli spaces of principal bundles.

\begin{remark}
    We fix a Cartan subgroup and Borel subgroup of $G$ such that $T\subseteq B\subseteq G$. The smooth moduli stack $Bun(G)$ of principal $G$-bundles on a Riemann surface $X$, where $G$ is a complex reductive group, has a stratification:
    \begin{equation*}
        Bun(G)=\bigsqcup_{\mu\in\Gamma\cap C_B} Bun_\mu(G),
    \end{equation*}
    into locally closed smooth algebraic substacks, where $\Gamma$ is the kernel lattice from Remark \ref{rem_lattice}, and $C_B$ is the Weyl chamber from Definition \ref{def_weylch}. The only open substack in this stratification is that of Ramanathan-semistable principal bundles $Bun^{\mathbf{sst}}(G)$.

    One can define a partial order $\geq$ on the pairs $(\mu,P_I)$ of Harder-Narasimhan types $\mu$, together with their corresponding standard parabolics $P_I$ in the canonical reductions, as done in \cite[Definition 2.4.1]{friedman-morgan_ontheconversetoatheoremofatiyahbott}:
    \begin{equation*}
        (\mu,P_I)\geq(\mu',P_{I'}')\text{ when }P_I\subseteq P_{I'}'\text{ and }\mu'\in\widehat{W\cdot\mu},
    \end{equation*}
    where $W\cdot\mu$ is the orbit of $\mu$ in $\Lie(T)_\R$ of the action of the Weyl group $W$, and $\widehat{W\cdot\mu}$ is the convex hull of $W\cdot\mu$. This order is compatible with the stratification such that:
    \begin{equation*}
        \overline{Bun_\mu(G)}=\bigsqcup_{(\mu,P_I)\geq(\mu',P_{I'}')} Bun_{\mu'}(G),
    \end{equation*}
    where $\overline{Bun_\mu(G)}$ is the closure of $Bun_\mu(G)$ in $Bun(G)$. This stratification can be seen as a colimit of GIT-instability (Hesselink) stratifications, as described more generally for the stack of coherent sheaves in \cite[4.4, 4.5, 4.6]{hoskins_stratificationsofmoduliofsheaves}.
\end{remark}

\end{document}